\newtheorem{thm}{Theorem}[section]
\newtheorem{prop}[thm]{Proposition}
\newtheorem{lem}[thm]{Lemma}
\numberwithin{equation}{section}
\newtheorem{rem}{Remark}[section]
\newcommand{\Be}{\begin{equation}}
\newcommand{\Ee}{\end{equation}}
\newcommand{\dist}{\operatorname{dist}}
\newcommand\supp{\operatorname{supp}}
\newcommand\diam{\operatorname{diam}}
 \newcommand{\rd}{\mathrm d}
\newcommand{\e}{\mathrm{e}}
\date{\today}
\begin{document}

\author[J.~B. Lee]{Jin Bong Lee} \address[J.~B. Lee]{Research Institute of Mathematics, Seoul National University,
Seoul 08826, Republic of Korea} \email{jinblee@snu.ac.kr}

\author[S. Lee]{Sanghyuk Lee} \address[S. Lee]{Department of Mathematical Sciences and RIM, Seoul National University, Seoul 08826, Republic of  Korea} \email{shklee@snu.ac.kr}

\author[Roncal]{Luz Roncal} \address[Roncal]{BCAM -- Basque Center for Applied Mathematics, 48009 Bilbao, Spain, and 
Universidad del Pa{\'i}s Vasco / Euskal Herriko Unibertsitatea, 48080 Leioa, Spain, and Ikerbasque, Basque Foundation for Science, 48011 Bilbao, Spain} \email{lroncal@bcamath.org}

\keywords{Fractional Schr\"odinger equation, Strichartz estimates, Fractal time}
\subjclass[2020]{35J10 (primary);  42B20; 28A80}

\title[Estimates over fractal time]{Strichartz and  local smoothing estimates for \\ the fractional Schr\"odinger equations \\  over fractal time}
 
\begin{abstract}  We obtain Strichartz-type estimates for the fractional Schrödinger operator $f \mapsto e^{it(-\Delta)^{\gamma/2}} f$ over a time set $E$ of fractal dimension. To obtain those  estimates capturing  fractal nature of $E$, we employ the notions  in the spirit of the Assouad dimension, such as, bounded Assouad characteristic and Assouad spectrum.  We also prove the estimate
$$
\| e^{it(-\Delta)^{\gamma/2}} f \|_{L_t^q(\mathrm{d}\mu; L_x^r(\mathbb{R}^d))} \le C \|f\|_{H^s},
$$
where $\mu$ is a measure satisfying an $\alpha$-dimensional growth condition.  In addition, we establish related inhomogeneous estimates and $L^2$ local smoothing estimates. A surprising feature of our work is that, despite dealing with rough fractal sets, we extend the known estimates for the fractional Schrödinger operators in a natural way, precisely consistent with the associated fractal dimensions.  
\end{abstract}

\maketitle

\section{introduction}
For a positive  number $\gamma$, we consider the operator 
\[  U^\gamma_t f(x)=   \e^{it(-\Delta)^{\gamma/ 2}} f (x):=(2\pi)^{-d} \int_{\mathbb R^d} \e^{i(\langle x, \xi\rangle+ t|\xi|^\gamma)} \widehat f(\xi) \,\mathrm{d}\xi, \] 
which is a solution to the initial value problem 
\[
i\partial_t u +  (-\Delta)^{\gamma/2} u=0, \quad u(0) = u_0.
\]
The case \(\gamma = 2\)  corresponds to the classical Schrödinger equation, which arises from the Brownian motion. For $\gamma \in (0, 2) \setminus \{1\}$,   the fractional Schrödinger equation is derived by applying Feynman's path integral framework to Lévy motion;  see  Laskin \cite{La1, La2}.

Space-time estimates, known as Strichartz estimates, play an important role in the study of  both linear and nonlinear equations by providing quantitative control of the solutions.   These estimates are formulated  in mixed norms in $x$ and $t$, whose exponents are required to satisfy  an \textit{admissible} condition.
 We say that a pair $(q,r)\in \mathbb [2,\infty]\times [2,\infty)$ is $\sigma$-admissible if 
\[  \frac1q\le \sigma \Big(\frac12-\frac1r\Big). \]
It is well known that 
\[
\| \e^{it(-\Delta)^{\gamma/2}} f \|_{L_t^q(\mathbb R; L^r_x (\mathbb R^d))} \le C \| f \|_{\dot{H}^s}, \quad s = \frac{d}{2} - \frac{d}{r} - \frac{\gamma}{q}
\]
holds  for sharp \(d/2\)-admissible \((q, r)\), provided that   \(\gamma \in \mathbb{R}_{>0} \setminus \{1\}\) (see, for example, \cite{COX}).
Here,  $\dot{H}^s$ denotes the homogeneous Sobolev space with norm given by \eqref{hom_sobolev}.  
For the wave operator $\e^{it(-\Delta)^{1/2}}$, the estimate holds for $\frac{d-1}{2}$-admissible $(q,r)$.
We recommend \cite{KPV, COX, ChoLee, HongSire} and references therein for the previous studies on the Strichartz estimate for the fractional Schr\"odinger equation.

Let $E \subset \mathbb{R}$ be a bounded set. 
In this note,  we are concerned with the Strichartz type estimates  over $\mathbb R^d\times E$, where  the time variable is restricted (from 
$\mathbb R$) to a subset  $E$ of fractal dimension, and the Lebesgue measure $\mathrm dt$  is replaced by a general measure $\mu$ supported on 
$E$ satisfying certain conditions. This approach enables us to investigate the temporal behavior of the solution  on the  lower-dimensional set $E$.  Most typical examples of such sets are  the  Cantor sets. 

Those kinds of estimates have recently appeared in the  works \cite{AHRS, BRRS, RS, LRZZ} (see \eqref{discrete} below), where spherical maximal functions defined via the supremum over a fractal set of dilation parameters were studied. More precisely, Strichartz-type estimates for the wave operator were obtained, relating the size (or dimension) of $E$ 
  to the smoothness of the input function.

Let $N(E, \delta)$ denote the minimal number of intervals of length $\delta$ needed to  cover $E$.
The (upper) Minkowski dimension $\dim_M E $ of $E$ is given by the infimum of $\beta>0$ such that
\[ 
	N(E, \delta) \leq C(\varepsilon) \delta^{-\beta-\varepsilon} 
\]
for every  $\epsilon>0$.   
This notion is particularly useful for quantifying the size of a lower-dimensional set $E$  via coverings. 

However,  as shown  in the previous works 
\cite{AHRS, BRRS, RS},   the Minkowski dimension alone  is not enough to capture the dispersive nature of the wave operator. 
This is due to the multi-scale phenomena  associated to frequency localization, i.e., Littlewood--Paley projection.
As to be seen later, by using  the Minkowski dimension alone, it is difficult to obtain a precise measurement of quantities  in relative scale  with respect to the size of the frequency of a given function.

In this paper, we mainly make use of the Assouad dimension, which  provides a  more effective framework for obtaining quantitative estimates over fractal sets.  
The Assouad dimension $\dim_A E$  is defined by
\[
\dim_A E = \inf \{a > 0 : \exists c > 0 \text{ s.t. } \forall I, \delta \in (0,|I|), N(E \cap I, \delta) \leq c\delta^{-a}|I|^a\},
\]
where $I$ runs over subintervals of $[0, 1]$.  It is clear that $ \dim_M E \leq \dim_A E $. Although the definitions of the Assouad and Minkowski  dimensions are similar,  the inequality  can be strict. For example,  consider a set $ E_a = \{n^{-a} : n=1,2,3,\dots \}$ for $0<a$. Then we have
$	 \dim_M(E_a)= 1/(1+a)< 1=\dim_A(E_a). $ 
We note that $\dim_M(E) = \dim_A(E)$ when $E$ is Ahlfors--David regular (see, for example,  \cite[Theorem 6.4.1]{Fra}).

\subsection{Homogeneous estimates}  

In order to get a precise quantitative estimate for the operator $U^\gamma_t$ over the fractal set $E$, the 
Assouad dimension alone is insufficient due to the nature of its  definition.   

Unlike the typical cases, we do not assume  that $E$ is contained in a fixed compact set. 
For $\alpha\in (0,1]$ and a  set $E \subset \mathbb R$, we define 
\begin{equation} 
\label{assouad}
 [E]_\alpha= 
 \sup_{0 < \delta } \sup_{\delta \leq |I| } \Big(\frac{\delta}{|I|} \Big)^{\alpha} N(E \cap I, \delta),
\end{equation}
where $I$ runs over any interval in $\mathbb R$.   
This may be regarded as a global version of  the notion of bounded $\alpha$-Assouad characteristic. We say that  $E$ has bounded $\alpha$-Assouad characteristic if $[E]_\alpha<\infty$.

Our first result concerns the estimates with frequency localization.  Let $\psi \in C_c^\infty(1/4,2)$.  For $j>0$, let $P_j$ denote the Littlewood--Paley projection operator defined by 
\Be
\label{l-p}
	 P_jf =(\psi(2^{-j} |\cdot|)  \widehat f \,)^\vee,\quad 
\Ee
and $P_{\le 0}={\rm Id}-\sum_{j=1}^\infty P_j$. Also, let $H^s$ denote the inhomogeneous Sobolev spaces of order $s$ with the norm 
\[  \|f\|_{H^s}^2=\int (1+|\xi|^2)^s |\widehat f(\xi)|^2\, \mathrm d\xi. \]  
 For simplicity, we write $L_x^r := L^r_x(\mathbb{R}^d)$ for the remainder of the paper.
For $\delta\in \mathbb R_{>0}$, we denote  $E(\delta)=\{ x: \dist(x, E)<\delta\}$.

 \begin{thm}  
  \label{thm_main}  
  Let  $\alpha\in (0,1]$ and $\gamma\in \mathbb R_{>0} \setminus \{1\}$, and  $2\le q,r \leq \infty$ with $(d/\alpha, q,r)\neq (2 ,2,\infty)$.  Suppose that $E$ is a bounded set with bounded $\alpha$-Assouad characteristic. 
 Then,  the estimate 
\Be \label{f-stri}    \sup_{j\ge 1}   \big\| U^\gamma_t P_j f\big\|_{L_t^q( E(2^{-\gamma j}); L_x^r)} + \big\| U^\gamma_t P_{\le 0} f\big\|_{L_t^q( E(1); L_x^r)}   \le C [E]_\alpha^\frac1q \|f\|_{H^s} \Ee
 holds   provided that $(q,r)$ satisfies 
 \Be 
 \label{con-ad} 
\frac d2 \Big(\frac 12 - \frac1r\Big) \ge  \frac \alpha q
 \Ee 
 and 
  \Be 
 \label{con-reg} 
 s\ge    s_\gamma (q,r):= \frac d2-\frac d r  - \frac {\gamma} q . 
 \Ee 
 \end{thm}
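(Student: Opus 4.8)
The plan is to reduce, by Littlewood--Paley decomposition together with parabolic rescaling, to a single \emph{fractal Strichartz inequality} for data of fixed frequency, and then to prove the latter by a $TT^*$ argument of Keel--Tao type in which the Lebesgue measure is replaced by the restriction of $\mathrm{d}s$ to a rescaled fattening of $E$.

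First I would localise in frequency. Since $\|f\|_{H^s}^2\sim\|P_{\le0}f\|_{L_x^2}^2+\sum_{j\ge1}\|P_jf\|_{H^s}^2$ and, by \eqref{con-reg}, $2^{j s_\gamma(q,r)}\le2^{js}$ for $j\ge1$, it suffices to establish, uniformly in $j\ge1$, the bound $\|U^\gamma_tP_jf\|_{L_t^q(E(2^{-\gamma j});L_x^r)}\le C[E]_\alpha^{1/q}2^{j s_\gamma(q,r)}\|P_jf\|_{L_x^2}$, together with a separate (elementary) bound for $P_{\le0}f$. Writing $\widehat g(\eta)=2^{jd/2}\widehat{P_jf}(2^j\eta)$, one has $\supp\widehat g\subset\{1/4\le|\eta|\le2\}$, $\|g\|_{L_x^2}=\|P_jf\|_{L_x^2}$, and $U^\gamma_tP_jf(x)=2^{jd/2}(U^\gamma_{2^{\gamma j}t}g)(2^jx)$; changing variables $x\mapsto2^{-j}x$, $t\mapsto2^{-\gamma j}s$ turns $E(2^{-\gamma j})$ into $E_j:=(2^{\gamma j}E)(1)$ and reduces the displayed bound to
\[
\|U^\gamma_sg\|_{L_s^q(E_j;L_x^r)}\le C[E]_\alpha^{1/q}\|g\|_{L_x^2},
\]
uniformly in $j$, for every $g$ with $\supp\widehat g\subset\{1/4\le|\eta|\le2\}$. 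Unwinding \eqref{assouad} shows that $E_j$ obeys the scale-free growth bound $|E_j\cap I|\le C[E]_\alpha|I|^\alpha$ for every interval $I$ with $|I|\ge1$, and this (together with boundedness of $E$) is the only property of $E_j$ I will use.

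For the rescaled inequality I would run $TT^*$. The cases $r=2$ and $q=\infty$ are immediate from the $L^2$-unitarity of $U^\gamma$ and Bernstein's inequality for the fixed annulus, so assume $2<r\le\infty$ and $2\le q<\infty$. Put $\nu=\mathbf{1}_{E_j}\,\mathrm{d}s$; by duality the inequality is equivalent to $\big\|\int U^\gamma_{-s}\widetilde PF(s)\,\mathrm{d}\nu(s)\big\|_{L^2}\le C[E]_\alpha^{1/q}\|F\|_{L_s^{q'}(\nu;L_x^{r'})}$, where $\widetilde P$ is a fixed smooth frequency cutoff with $\widetilde Pg=g$; squaring the left-hand side produces the bilinear form $\iint\langle K_{s-s'}F(s'),F(s)\rangle\,\mathrm{d}\nu(s')\,\mathrm{d}\nu(s)$ with $K_\tau=\widetilde PU^\gamma_\tau\widetilde P$. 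Because $\gamma\ne1$, the phase $|\eta|^\gamma$ has nondegenerate Hessian on $\{|\eta|\sim1\}$, so stationary phase gives the frequency-localised dispersive bound $\|K_\tau\|_{L^{r'}\to L^r}\lesssim(1+|\tau|)^{-\sigma}$ with $\sigma=d(\tfrac12-\tfrac1r)$, in addition to $\|K_\tau\|_{L^2\to L^2}\lesssim1$. I decompose the bilinear form dyadically according to $|s-s'|\sim2^k$, $k\ge0$ (with $k=0$ collecting $|s-s'|\le1$). Writing $\phi(s)=\|F(s)\|_{L_x^{r'}}$, the $k$-th piece is at most $C2^{-k\sigma}\langle\mathcal K_k\phi,\phi\rangle$, where $\mathcal K_k$ is the $\{0,1\}$-valued kernel $\mathbf{1}_{E_j}(s)\mathbf{1}_{E_j}(s')\mathbf{1}_{\{|s-s'|\sim2^k\}}$. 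The growth bound forces each row and column of $\mathcal K_k$ to have mass $\lesssim[E]_\alpha2^{k\alpha}$, hence $\|\mathcal K_k\|_{L^1\to L^1},\,\|\mathcal K_k\|_{L^\infty\to L^\infty}\lesssim[E]_\alpha2^{k\alpha}$; together with the trivial $\|\mathcal K_k\|_{L^1\to L^\infty}\le1$, interpolation of these three bounds yields $\|\mathcal K_k\|_{L^{q'}\to L^q}\lesssim([E]_\alpha2^{k\alpha})^{2/q}$. Thus the $k$-th piece is $\lesssim[E]_\alpha^{2/q}2^{-k(\sigma-2\alpha/q)}\|F\|_{L^{q'}(\nu;L^{r'})}^2$, and summing over $k\ge0$ proves the desired inequality whenever $\sigma>2\alpha/q$, that is, when \eqref{con-ad} holds with strict inequality.

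It remains to deal with the borderline case of \eqref{con-ad} and with $P_{\le0}f$. When equality holds in \eqref{con-ad} the geometric sum above diverges only logarithmically (and, since $E$ is bounded, only like $j$), and the estimate is recovered by the Keel--Tao endpoint argument: one interpolates the exponent-pair estimates for the dyadic pieces through the atomic/real-interpolation scheme, additionally using $\|K_\tau\|_{L^2\to L^2}\lesssim1$; the sole failure is the forbidden endpoint $(d/\alpha,q,r)=(2,2,\infty)$, which is exactly the excluded case. For $P_{\le0}f$, whose Fourier transform is supported in $\{|\xi|\le1\}$, Bernstein and unitarity give $\|U^\gamma_tP_{\le0}f\|_{L_x^r}\lesssim\|P_{\le0}f\|_{L_x^2}$ uniformly in $t$, and the growth bound gives $|E(1)|\lesssim[E]_\alpha$; combined with $\|P_{\le0}f\|_{L_x^2}\lesssim\|f\|_{H^s}$, this settles the low-frequency term. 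The main obstacle is precisely this borderline of the admissibility condition: as in classical Strichartz theory the crude dyadic summation loses a logarithm there, so one must run the delicate bilinear interpolation; a secondary point requiring care is tracking the powers of $[E]_\alpha$ through the interpolation step so that the final constant comes out exactly as $[E]_\alpha^{1/q}$.
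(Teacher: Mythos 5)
Your proposal is correct and follows essentially the same route as the paper: frequency localization, a $TT^*$ argument with the stationary-phase dispersive bound for $\gamma\neq 1$, a dyadic decomposition in $|t-s|$ whose pieces are controlled through the $\alpha$-Assouad counting bound by a Schur/Young-type inequality, and Keel--Tao bilinear real interpolation of the dyadic pieces at the endpoint $(q,r)=(2,\tfrac{2d}{d-2\alpha})$. The only organizational differences are that you rescale to unit frequency and work with the restriction of Lebesgue measure to the fattened set $(2^{\gamma j}E)(1)$, whereas the paper keeps explicit track of $j$ and discretizes to a maximally $2^{-\gamma j}$-separated subset of $E$ via a locally constant lemma (the paper itself notes the rescaling alternative); the borderline cases of \eqref{con-ad} with $q>2$, which you fold into ``Keel--Tao,'' are treated in the paper by a weak-type (Lorentz) variant of the discrete Young inequality, though they also follow, as you indicate, by interpolating the $q=2$ endpoint with the trivial $(q,r)=(\infty,2)$ estimate.
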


 If $\dim_A E =\alpha$,   then by a straightforward application of the definition of the Assouad dimension, one obtaians  the estimate \eqref{f-stri} provided that \eqref{con-ad} is replaced by a strict inequality.   
Using some specific sets such as Cantor sets,  one can also show that the result is basically sharp. (See  Section~\ref{subsec_nec}.) 
 
 When  $E = \mathbb R$, so $\alpha = 1$, the result recovers the classical Strichartz estimate (\cite{KeelTao}) via the Littlewood--Paley inequality and scaling. Indeed, 
 once we have  the local-in-time  estimate, the global-in-time estimate can also be deduced by a scaling argument using homogeneity. As $\alpha$ decreases, the admissible range \eqref{con-ad} of $(q, r)$ becomes larger. This is natural, as a smaller $\alpha$ indicates that control is required only on a smaller time set, thus allowing a broader range of admissible pairs.  At the extreme case  $\alpha = 0$, for example, when  $E$ is a singleton, i.e., $E = \{t_0\}$ for some $t_0\in \mathbb R$, the estimate \eqref{f-stri} holds for all $r, q \ge 2$, provided that \eqref{con-reg} is satisfied. This is, in fact, a straightforward consequence of the Sobolev embedding. 
 
 The underlying time set $E(2^{-\gamma j})$ is also natural in view of the uncertainty principle, since the temporal Fourier transform of $U^\gamma_t P_j f$ is supported in an interval of length comparable to $2^{\gamma j}$.   The condition \eqref{con-reg} is necessary for \eqref{f-stri} to hold, and so is  \eqref{con-ad}  when $s= d/2-d/r  - \gamma/q$. This will be shown near the end of Section~\ref{sec_hom}. 
 
  Let  
\[\mathcal E_j= \{ \tau \} \]  be  a maximally $2^{-\gamma j }$-separated subset of   $E$. That is to say, $|\tau - \tau'|\geq2^{-\gamma j }$ if $\tau, \tau'$ are distinct elements of $\mathcal E_{j}$,  and  for $\tau'\in  E\setminus  \mathcal E_j$  there is an element  $\tau\in \mathcal E_j$ such that $|\tau-\tau'|<2^{-\gamma j }$.  
The estimate \eqref{f-stri} is  equivalent to its discrete counterpart 
\Be  \label{discrete0}
\| U_{\tau}^\gamma P_j f\|_{\ell_{\tau}^q( \mathcal E_j  ;L^r_x)} \leq C 2^{(s+ \frac\gamma q)j} [E]_\alpha^\frac1q \|f\|_{L^2_x},\Ee
where $ \| a_\tau \|_{\ell_{\tau}^q}= (\sum_{\tau\in \mathcal E_j}    |a_\tau|^q)^\frac1q$. This can be justified easily using a locally constant property (Lemma~\ref{lem_250901}) and  will be exploited later to prove Theorem \ref{thm_main}.

When $\gamma = 1$ (i.e., $U_t^\gamma$ is the wave operator) and $E$ is  a bounded set, the estimates of a discrete form related to \eqref{discrete0} have already appeared in earlier works. 
Those estimates were  introduced to prove $L^p$–$L^q$ boundedness of the spherical maximal function for $d \ge 3$  \cite{AHRS}, and of the circular maximal function  \cite{RS} over fractal dilation sets, except for certain endpoint cases.  Recently,  variants of such estimates were studied by Wheeler \cite{Wh}, Beltran--Roos--Ruter--Seeger \cite{BRRS},  and Lee–Roncal–Zhang–Zhao \cite{LRZZ}, in connection with $L^p$ and $L^p$–$L^q$ local smoothing estimates for the wave operator over fractal time sets.
Following the argument in this paper, one can also obtain the corresponding result  for the wave operator; see Remark~\ref{wave-homo} below.
  
By $\dot H^s$, we denote the homogeneous Sobolev spaces of order $s$ with the norm 
\begin{align}\label{hom_sobolev}  \|f\|_{\dot H^s}^2=\int  |\xi|^{2s} |\widehat f(\xi)|^2 \,\mathrm d\xi. \end{align}
We obtain the following theorem, which gives an essentially scaling invariant estimate.  It should be noted that  $E$ is not assumed to be bounded. 

 \begin{thm}\label{homo-homo}   Let  $\alpha\in (0,1]$, $\gamma\in \mathbb R_{>0} \setminus \{1\}$, and  $2\le q,r \leq \infty$ with $(d/\alpha, q,r)\neq (2 ,2,\infty)$ and $\frac d r  + \frac {\gamma} q < d$.  Suppose that $E$ has bounded $\alpha$-Assouad characteristic.  Then,  the estimate 
\Be 
\label{f-stri-hs}   \sup_{-\infty< j <\infty}  \big\| U^\gamma_t P_j f\big\|_{L_t^q( E(2^{-\gamma j}); L_x^r)} \le C [E]_\alpha^\frac1q \|f\|_{\dot H^{s}}
\Ee
 holds   if $(q,r)$ satisfies \eqref{con-ad}  and $s=s_\gamma(q,r)$. 
 \end{thm}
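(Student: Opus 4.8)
The plan is to deduce \eqref{f-stri-hs} from the fixed--frequency content of Theorem~\ref{thm_main} by a parabolic rescaling. Two elementary facts drive the argument. First, the scaling relation $U^\gamma_t\big(g(\lambda\,\cdot)\big)(x)=\big(U^\gamma_{\lambda^\gamma t}g\big)(\lambda x)$ for $\lambda>0$. Second, the global Assouad characteristic \eqref{assouad} is invariant under dilations and translations, $[\lambda E]_\alpha=[E+a]_\alpha=[E]_\alpha$ for all $\lambda>0$ and $a\in\mathbb R$, which is immediate from the definition upon substituting $I\mapsto\lambda I+a$, $\delta\mapsto\lambda\delta$. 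Since \eqref{f-stri-hs} only requires a supremum over $j$, it suffices to prove, \emph{uniformly} in $j\in\mathbb Z$,
\begin{equation}\label{goal-pf}
\big\|U^\gamma_t P_j f\big\|_{L^q_t(E(2^{-\gamma j});L^r_x)}\le C\,[E]_\alpha^{1/q}\,\|f\|_{\dot H^{s_\gamma(q,r)}},
\end{equation}
where $E$ is now allowed to be unbounded.

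First I would rescale. Fix $j$ and put $g(x):=(P_jf)(2^{-j}x)$, so that $\widehat g$ is supported in the fixed annulus $\{\,1/4<|\xi|<2\,\}$ and $\|g\|_{L^2_x}=2^{jd/2}\|P_jf\|_{L^2_x}$. Changing variables in $x$ and in $t$ (with $\tau=2^{\gamma j}t$) and using the scaling relation, together with the identity $2^{\gamma j}E(2^{-\gamma j})=(2^{\gamma j}E)(1)$, yields
\begin{equation}\label{rescale-pf}
\big\|U^\gamma_t P_j f\big\|_{L^q_t(E(2^{-\gamma j});L^r_x)}=2^{-j(d/r+\gamma/q)}\big\|U^\gamma_\tau g\big\|_{L^q_\tau(E_j(1);L^r_x)},\qquad E_j:=2^{\gamma j}E.
\end{equation}
Granting for the moment the \emph{unit-scale estimate}
\begin{equation}\label{unit-pf}
\big\|U^\gamma_\tau g\big\|_{L^q_\tau(F(1);L^r_x)}\le C[F]_\alpha^{1/q}\|g\|_{L^2_x}
\end{equation}
for every (not necessarily bounded) set $F$ with $[F]_\alpha<\infty$ and every $g$ with $\widehat g$ supported in $\{1/4<|\xi|<2\}$, with $C=C(d,\alpha,\gamma,q,r)$, and applying it with $F=E_j$, using $[E_j]_\alpha=[E]_\alpha$ and $\|g\|_{L^2_x}=2^{jd/2}\|P_jf\|_{L^2_x}$ in \eqref{rescale-pf}, I obtain $\|U^\gamma_t P_j f\|_{L^q_t(E(2^{-\gamma j});L^r_x)}\le C\,2^{j\,s_\gamma(q,r)}[E]_\alpha^{1/q}\|P_jf\|_{L^2_x}$, the exponent being precisely $j\,s_\gamma(q,r)$ because $d/2-d/r-\gamma/q=s_\gamma(q,r)$. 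Since $|\xi|\sim2^j$ on $\supp\widehat{P_jf}$, one has $\|P_jf\|_{L^2_x}\le C\,2^{-j\,s_\gamma(q,r)}\|f\|_{\dot H^{s_\gamma(q,r)}}$, so the powers of $2^j$ cancel, \eqref{goal-pf} follows, and taking the supremum over $j\in\mathbb Z$ proves \eqref{f-stri-hs}. Here \eqref{con-ad} and $(d/\alpha,q,r)\ne(2,2,\infty)$ are exactly the conditions Theorem~\ref{thm_main} needs at the unit scale, while the extra hypothesis $d/r+\gamma/q<d$, equivalently $s_\gamma(q,r)>-d/2$, is what keeps this scaling--critical estimate meaningful.

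It remains to prove \eqref{unit-pf}. For \emph{bounded} $F$ I would split $g=P_{\le0}g+P_1g+P_2g$, an exact identity since $P_{\le0}+\sum_{k\ge1}P_k=\mathrm{Id}$, with only the terms $k=1,2$ surviving because $\widehat g$ is supported in $\{1/4<|\xi|<2\}$. The $P_{\le0}$--piece is controlled over $F(1)$ directly by Theorem~\ref{thm_main} applied to $g$ with the set $F$ (taking $s=s_\gamma(q,r)$ in \eqref{con-reg} and using $\|g\|_{H^{s_\gamma(q,r)}}\approx\|g\|_{L^2_x}$, valid since $\widehat g$ lives in a fixed compact annulus). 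For $k\in\{1,2\}$, whose natural time set in Theorem~\ref{thm_main} is the \emph{smaller} neighborhood $F(2^{-\gamma k})$, I cover $F(1)=\bigcup_{e\in F}(e-1,e+1)$ by a bounded number $M_k=M_k(\gamma)$ of translated neighborhoods $(F+b_i)(2^{-\gamma k})$, $i=1,\dots,M_k$, and bound each $\|U^\gamma_\tau P_k g\|_{L^q_\tau((F+b_i)(2^{-\gamma k});L^r_x)}$ by applying Theorem~\ref{thm_main} to $g$ with the set $F+b_i$ and reading off the $k$-th term of its supremum, using $[F+b_i]_\alpha=[F]_\alpha$; summing the $M_k$ contributions costs only a factor $M_k^{1/q}$. (Equivalently, one may argue throughout with the discrete estimate \eqref{discrete0} and Lemma~\ref{equiv}.) Finally, for \emph{unbounded} $F$ with $[F]_\alpha<\infty$ I exhaust $F=\bigcup_n(F\cap[-n,n])$: since $(F\cap[-n,n])(1)\uparrow F(1)$ and $[F\cap[-n,n]]_\alpha\le[F]_\alpha$, the bounded case plus monotone convergence gives \eqref{unit-pf} for $F$.

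I do not expect a deep obstacle: the argument is essentially forced once Theorem~\ref{thm_main} is in hand, and the invariances of $[\cdot]_\alpha$ are tailor--made for it. The only delicate points are bookkeeping ones — matching the powers of $2^j$ in \eqref{rescale-pf} and in the elementary bound for $\|P_jf\|_{L^2_x}$ so that the constant in \eqref{goal-pf} is $j$--independent, aligning the frequency support of the rescaled datum with the Littlewood--Paley cutoffs of Theorem~\ref{thm_main} \emph{over the correct time neighborhoods}, and the limiting step that removes the boundedness of $E$.
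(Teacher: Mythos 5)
Your overall strategy --- rescale to unit frequency, use the dilation/translation invariance of $[\cdot]_\alpha$, and reduce to the bounded-set, frequency-localized estimate --- is sound, and it is essentially the alternative the paper itself alludes to after Proposition~\ref{lem_Str} (``one can also deduce the estimate \eqref{250309_0149c} for any $j$ from that for $j=0$''). The scaling identity \eqref{rescale-pf}, the invariance $[\lambda E+a]_\alpha=[E]_\alpha$, the covering of $F(1)$ by $O_\gamma(1)$ translates $(F+b_i)(2^{-\gamma k})$ for $k=1,2$, and the exhaustion $F=\bigcup_n(F\cap[-n,n])$ are all correct as far as they go. The paper instead proves the uniform-in-$j$, unbounded-$E$ estimate \eqref{250309_0149c} directly (Proposition~\ref{lem_Str}), which makes Theorem~\ref{homo-homo} a two-line consequence; your route is more circuitous but legitimate in principle.

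The genuine gap is in your treatment of the low-frequency piece $P_{\le 0}g$ of the rescaled datum. You control it ``directly by Theorem~\ref{thm_main}'' over $F(1)$, but the constant in the $P_{\le 0}$ term of \eqref{f-stri} is \emph{not} uniform over the sets to which you apply it: the paper's proof of that term is Bernstein plus the trivial bound $\| U_t^\gamma P_{\le 0} f\|_{L_t^q(E(1))}\le |E(1)|^{1/q}\sup_t\|U_t^\gamma P_{\le 0}f\|_{L^r_x}$, so the hidden constant carries a factor $(2+\diam E)^{1/q}$ (and this dependence is not an artifact: for general $f\in H^s$ with very low frequencies there is no time decay, so no diameter-free bound of this form can hold). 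In your argument the relevant sets are $F_n=(2^{\gamma j}E)\cap[-n,n]$, whose diameters tend to infinity both as $n\to\infty$ in the exhaustion and as $j\to+\infty$ (and $|F(1)|=\infty$ is possible for the unbounded $E$ allowed in Theorem~\ref{homo-homo}), so the bound you quote for the $P_{\le 0}g$ component blows up and the limiting step fails for that piece. The claim \eqref{unit-pf} itself is true --- $P_{\le 0}g$ still has frequencies $\sim 1$, so the dispersive decay $(1+|t-s|)^{-d(1/2-1/r)}$ is available and the $TT^*$/discrete-Young machinery applies to it exactly as to $P_1g,P_2g$ --- but Theorem~\ref{thm_main} as stated and proved does not deliver it. The repair is easy: either rescale by $2^{j-3}$ instead of $2^{j}$ so that $\widehat g$ is supported in $\{|\xi|>2\}$ and $P_{\le 0}g=0$ (at the cost of a few more unit-frequency pieces handled by the same covering), or treat $P_{\le 0}g$ as a unit-frequency block via Lemma~\ref{equiv} and the argument of Proposition~\ref{lem_discrete_Str} rather than via the $P_{\le 0}$ clause of Theorem~\ref{thm_main}. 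As written, though, this step is a hole.
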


Here, we exclude the case $\frac{d}{r} + \frac{\gamma}{q} \ge d$, since 
the space $\dot{H}^s$ does not admit the Schwartz class as a natural dense
subspace when $s \le -\frac{d}{2}$.

Although the estimates \eqref{f-stri} and  \eqref{f-stri-hs} give control over any  set $E$ in terms of its boundedness of $\alpha$-Assouad characteristic, it is less desirable in that the estimate is formulated via frequency localization.  
To avoid reliance on frequency localization,  we  assume that  $E$ is a set  on which a positive measure $\mu$ is supported. Instead of the bounded 
$\alpha$-Assouad characteristic condition, we  consider 
  \Be\label{alpha-dim}  
  \mu( (t-\rho, t+ \rho)) \le C\rho^\alpha
  \Ee
  for all $(t,\rho)\in \mathbb R\times (0,\infty)$ with $\alpha \in (0, 1]$.  By  $\langle \mu\rangle_\alpha$ we denote the best possible $C$ such that 
 \eqref{alpha-dim}  holds.  
  By  Frostman's  lemma (see, e.g., \cite[Chapter 8]{Mat})  \eqref{alpha-dim} implies 
 the Hausdorff dimension of $E$, $\dim_H E \ge \alpha$.

 With those assumptions, we obtain the following. 
   
 \begin{thm}\label{cor_main} 
 Let  $\alpha\in (0,1]$, $\gamma\in \mathbb R_{>0} \setminus \{1\}$, and  $2\le q,r \leq \infty$ with $(d/\alpha, q,r)\neq (2 ,2,\infty)$.
  Suppose that   $\mu$ has  bounded support and satisfies  \eqref{alpha-dim}. Then, we have 
 \Be \label{f-stri10}   \| U^\gamma_t f\|_{L_t^q( \mathrm{d}\mu ; L_x^r)} \le C\langle \mu\rangle_\alpha^\frac1q\|f\|_{H^s}\Ee
  provided that \eqref{con-ad} holds and 
  \begin{align}\label{con-reg2}
  	s\ge s_{\gamma\alpha}(q,r).
\end{align}
 \end{thm}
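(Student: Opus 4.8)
The plan is to reduce, via a Littlewood--Paley decomposition, to a single frequency block and then run a $TT^*$ argument in which the fractal nature of $\supp\mu$ enters only through \eqref{alpha-dim}. (One cannot simply quote Theorem~\ref{thm_main}: the growth condition \eqref{alpha-dim} does \emph{not} force $\supp\mu$ to have bounded $\alpha$-Assouad characteristic --- in fact $\supp\mu$ may have Assouad dimension strictly larger than $\alpha$.) Write $f=P_{\le 0}f+\sum_{j\ge1}P_jf$ with $P_j$ as in \eqref{l-p}. For the low-frequency part, Bernstein's inequality, the unitarity of $U^\gamma_t$ on $L^2_x$, and $\mu(\mathbb R)\lesssim\langle\mu\rangle_\alpha$ (a consequence of \eqref{alpha-dim} and the boundedness of $\supp\mu$) give $\|U^\gamma_tP_{\le0}f\|_{L^q_t(\rd\mu;L^r_x)}\lesssim\langle\mu\rangle_\alpha^{1/q}\|P_{\le0}f\|_{L^2_x}\lesssim\langle\mu\rangle_\alpha^{1/q}\|f\|_{H^s}$, the last step being valid for every real $s$ since $(1+|\xi|^2)^s\asymp1$ on $\{|\xi|\lesssim1\}$. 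Hence it suffices to prove the single-block bound
\Be\label{plan-block}
\|U^\gamma_tP_jf\|_{L^q_t(\rd\mu;L^r_x)}\le C\,2^{js_{\gamma\alpha}(r,q)}\,\langle\mu\rangle_\alpha^{1/q}\,\|P_jf\|_{L^2_x},\qquad j\ge1,
\Ee
and to sum over $j$: for $s>s_{\gamma\alpha}(r,q)$ this is immediate from \eqref{plan-block} and the Cauchy--Schwarz inequality, while at the endpoint $s=s_{\gamma\alpha}(r,q)$ one combines \eqref{plan-block} with Minkowski's inequality ($q,r\ge2$) and the Littlewood--Paley square function estimate in $x$ (for $1<r<\infty$).

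Fix $j\ge1$ and set $\delta=2^{-\gamma j}$ and $\beta=d(\tfrac12-\tfrac1r)$. Writing $T_jf=U^\gamma_\cdot P_jf:L^2_x\to L^q_t(\rd\mu;L^r_x)$, one computes $T_jT_j^*F(t)=\int_{\mathbb R}U^\gamma_{t-s}P_j^2F(s)\,\rd\mu(s)$ and $\|T_j\|_{L^2_x\to L^q_t(\rd\mu;L^r_x)}^2=\|T_jT_j^*\|_{L^{q'}_t(\rd\mu;L^{r'}_x)\to L^q_t(\rd\mu;L^r_x)}$. Because $\gamma\ne1$, the phase $|\xi|^\gamma$ has non-degenerate Hessian on $\{|\xi|\sim 2^j\}$, so stationary phase yields $\|U^\gamma_\sigma P_j^2\|_{L^1_x\to L^\infty_x}\lesssim 2^{jd}(1+2^{\gamma j}|\sigma|)^{-d/2}$; interpolating with the trivial $L^2_x$ bound gives $\|U^\gamma_\sigma P_j^2 g\|_{L^r_x}\lesssim 2^{2j\beta}(1+2^{\gamma j}|\sigma|)^{-\beta}\|g\|_{L^{r'}_x}$. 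Inserting this into the formula for $T_jT_j^*$ and taking norms bounds $\|T_jT_j^*\|$ by $2^{2j\beta}$ times the $L^{q'}(\rd\mu)\to L^q(\rd\mu)$ operator norm of
\[
S_\delta g(t)=\int_{\mathbb R}\big(1+|t-s|/\delta\big)^{-\beta}g(s)\,\rd\mu(s).
\]
Since $s_{\gamma\alpha}(r,q)=\beta-\gamma\alpha/q$, the estimate \eqref{plan-block} --- and hence Theorem~\ref{cor_main} --- follows once we establish the weighted fractional-integration bound
\Be\label{plan-key}
\|S_\delta\|_{L^{q'}(\rd\mu)\to L^q(\rd\mu)}\le C\big(\langle\mu\rangle_\alpha\,\delta^\alpha\big)^{2/q}.
\Ee

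The inequality \eqref{plan-key} is where \eqref{con-ad} enters: \eqref{con-ad} is exactly the condition $\beta\ge 2\alpha/q$. The endpoint $q=\infty$ is trivial, since the kernel of $S_\delta$ is bounded by $1$ and $\mu$ is finite. For $q=2$, split the kernel into its dyadic pieces supported on $\{|t-s|\sim 2^m\delta\}$, $m\ge0$; covering $\supp\mu$ by boundedly overlapping intervals of length $2^m\delta$ and applying \eqref{alpha-dim} on each interval shows that the $m$-th piece has $L^2(\rd\mu)$-operator norm $\lesssim 2^{m(\alpha-\beta)}\langle\mu\rangle_\alpha\delta^\alpha$ (equivalently, a weighted Schur test: $\sup_t\int(1+|t-s|/\delta)^{-\beta}\,\rd\mu(s)\lesssim\langle\mu\rangle_\alpha\delta^\alpha$ when $\beta>\alpha$). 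Summing the geometric series yields \eqref{plan-key} for $q=2$ whenever $\beta>\alpha$, i.e.\ whenever \eqref{con-ad} is strict, and interpolating the $q=2$ and $q=\infty$ bounds then gives \eqref{plan-key} for all $q\in[2,\infty]$ in that range; this already establishes \eqref{f-stri10} whenever \eqref{con-reg2} holds with strict inequality.

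The main obstacle is the critical case, where \eqref{con-ad} holds with equality, so that $\beta=2\alpha/q$ and the dyadic summation above loses a logarithm; one must nevertheless obtain \eqref{plan-key} without that logarithm in order to reach the sharp exponent $s=s_{\gamma\alpha}(r,q)$ in \eqref{con-reg2}. This is the exact analogue of the endpoint case in the Keel--Tao Strichartz estimate: one proves, for exponent pairs slightly off the critical duality line, genuinely decaying bounds for the dyadic pieces of $S_\delta$, and sums them by a bilinear real-interpolation argument (Bourgain's $\theta$-trick) rather than by the triangle inequality. This should succeed for all critical $(q,r)$ with $q\ge2$, except when the critical $L^2(\rd\mu)\to L^2(\rd\mu)$ bound admits no improvement even off-diagonal, which occurs precisely when $r=\infty$ and $\beta=d/2=\alpha$, i.e.\ for the single triple $(d/\alpha,q,r)=(2,2,\infty)$ that is excluded in the hypotheses. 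I expect this endpoint analysis of \eqref{plan-key} to be the only substantial point, the remainder being the routine machinery sketched above.
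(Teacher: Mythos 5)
Your overall architecture is the same as the paper's: Littlewood--Paley reduction to a single frequency block (the paper's Lemma~\ref{lem_freq_loc}), a $TT^*$ argument with the stationary-phase dispersive bound $\|U^\gamma_\sigma P_j^2\|_{L^{r'}_x\to L^r_x}\lesssim 2^{2j\beta}(1+2^{\gamma j}|\sigma|)^{-\beta}$ (Lemma~\ref{rr'}), a Schur/Young-type bound exploiting only \eqref{alpha-dim} for the non-endpoint case, and a Keel--Tao bilinear interpolation at the endpoint. Your observation that Theorem~\ref{thm_main} cannot simply be quoted is correct and matches the paper's logic (it proves Proposition~\ref{lem_discrete_Str1} separately). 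However, there are two gaps. The minor one: interpolating the \emph{summed} operator bounds at $q=2$ (valid only for $\beta>\alpha$) and $q=\infty$ covers only $\beta>\alpha$, so admissible pairs with $q>2$ and $2\alpha/q<\beta\le\alpha$ fall through the cracks; moreover strictness in \eqref{con-reg2} does not rescue them, since the failure is in \eqref{plan-key} for fixed $(q,r)$, not in the sum over $j$. The repair is to interpolate the dyadic pieces $S_{\delta,m}$ \emph{before} summing in $m$ (giving the piece bound $2^{m(2\alpha/q-\beta)}(\langle\mu\rangle_\alpha\delta^\alpha)^{2/q}$), or equivalently to apply Young's inequality with $\sigma=q/2$ directly as the paper does via Lemmas~\ref{young} and~\ref{kernel1}; the critical pairs with $q>2$ then follow from the weak-type (Lorentz) version of that Young inequality, with no bilinear interpolation needed.

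The substantive gap is at the true endpoint $(q,r)=(2,r_\ast)$, $r_\ast=2d/(d-2\alpha)$, $d>2\alpha$. Your reduction asserts that \eqref{plan-block} follows once \eqref{plan-key} is established, and you propose to prove \eqref{plan-key} at the critical line by bilinear interpolation of the dyadic pieces of $S_\delta$. But \eqref{plan-key} is \emph{false} at $q=2$, $\beta=\alpha$: for the positive scalar operator $S_\delta$ the logarithmic loss is genuine (already for $\rd\mu=\chi_{[0,1]}\rd t$, $\alpha=1$, testing on constants gives $\|S_\delta\|_{L^2\to L^2}\gtrsim\delta\log(1/\delta)$), and the dyadic pieces of $S_\delta$ have $L^{p'}(\rd\mu)\to L^p(\rd\mu)$ norms that are $O(1)$ in $m$ for \emph{every} $p$ when $\beta=\alpha$, so there are no off-critical bounds of both signs to feed into the $\theta$-trick at the level of the scalar majorant. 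The Keel--Tao mechanism requires retaining the bilinear form $\iint\chi_k\langle U^\gamma_sP_jF(s),U^\gamma_tP_jG(t)\rangle_x\,\rd\mu(s)\rd\mu(t)$ with its spatial $L^{a'}_x\times L^{b'}_x$ structure intact and perturbing the \emph{spatial} exponents $(a,b)$ around $(r_\ast,r_\ast)$, which changes the decay rate $2^{(\alpha-\lambda(a,b))k}$ through zero; this is exactly the paper's Lemma~\ref{lem_interpol11} (together with the localization of the $t$-supports to intervals of length $2^{k-\gamma j}$, Remark~\ref{interval}). So the endpoint case cannot be routed through \eqref{plan-key}; you must back up one step in the reduction and run the bilinear argument before majorizing by $S_\delta$.
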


 It is unsurprising that Theorems \ref{thm_main} and \ref{cor_main} are formulated under the same hypothesis \eqref{con-ad}. Indeed, if $\mu$ possesses stronger regularity property, for example, if $\mu$ is an Ahlfors–David $\alpha$-regular measure, then in particular \eqref{alpha-dim} holds and this already implies that $\supp\mu$ has bounded $\alpha$-Assouad characteristic. This can be shown by an elementary argument. Since the measures $\mu$ in Theorem \ref{cor_main} may be of this type,  we obtain \eqref{f-stri} for $E=\supp\mu$ under the same condition \eqref{con-ad}.

The following theorem is a homogeneous version of Theorem \ref{cor_main}, where  $\supp \mu$ is not assumed to be bounded. 

\begin{thm}\label{cor_m} 
 Let  $\alpha\in (0,1]$, $\gamma\in \mathbb R_{>0} \setminus \{1\}$, and  $2\le q,r \leq \infty$ with $(d/\alpha, q,r)\neq (2 ,2,\infty)$ and $\frac d r  + \frac {\gamma} q < d$. 
  Suppose that   $\mu$  satisfies  \eqref{alpha-dim}. Then, we have 
 \Be \label{f-stri1}   \| U^\gamma_t f\|_{L_t^q( \mathrm{d}\mu ; L_x^r)} \le C\langle \mu\rangle_\alpha^\frac1q\|f\|_{\dot H^s}\Ee
  provided that \eqref{con-ad} holds and $s=s_{\gamma\alpha}(q,r)$. 
 \end{thm}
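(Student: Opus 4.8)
The plan is to localize in frequency and run a $TT^*$ argument driven by the dispersive kernel, exploiting the $\alpha$-dimensional growth of $\mu$ at every dyadic scale. We begin with reductions. When $q=\infty$ the estimate reduces to the Sobolev embedding $\dot H^{\,d/2-d/r}\hookrightarrow L^r_x$ together with the fact that $U^\gamma_t$ is an isometry of $\dot H^s$, and $\langle\mu\rangle_\alpha^{1/q}=1$; so we may assume $q<\infty$, whence \eqref{con-ad} forces $r>2$. The remaining endpoint $r=\infty$ ($q<\infty$) will be recovered at the end by complex interpolation between the case $r<\infty$ and the case $q=\infty$, interpolating $(1/q,1/r)$ and the Sobolev exponent --- hence also the constant $\langle\mu\rangle_\alpha^{1/q}$ --- simultaneously; so assume $2<r<\infty$. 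Finally, since $\frac dr+\frac\gamma q<d$ and $\alpha\le1$ we have $s_{\gamma\alpha}(r,q)>-\frac d2$, so Schwartz functions are dense in $\dot H^{s_{\gamma\alpha}(r,q)}$ and we may take $f$ Schwartz; then $f=\sum_{j\in\mathbb Z}P_jf$ converges in $\dot H^{s_{\gamma\alpha}(r,q)}$.

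Next we reduce to a single frequency. For $1<r<\infty$ the Littlewood--Paley inequality gives $\|U^\gamma_tf\|_{L^r_x}\lesssim\big\|\big(\sum_{j\in\mathbb Z}|U^\gamma_tP_jf|^2\big)^{1/2}\big\|_{L^r_x}$ for each $t$, and, since $q,r\ge2$, Minkowski's inequality in $L^{q/2}_t(\mathrm d\mu;L^{r/2}_x)$ then yields
\[
\|U^\gamma_tf\|_{L^q_t(\mathrm d\mu;L^r_x)}\ \lesssim\ \Big(\sum_{j\in\mathbb Z}\|U^\gamma_tP_jf\|_{L^q_t(\mathrm d\mu;L^r_x)}^2\Big)^{1/2}.
\]
Hence it suffices to prove the frequency-localized bound
\begin{equation}\label{eq:cormfreq}
\|U^\gamma_tP_jf\|_{L^q_t(\mathrm d\mu;L^r_x)}\ \le\ C\,\langle\mu\rangle_\alpha^{1/q}\,2^{\,j\,s_{\gamma\alpha}(r,q)}\,\|P_jf\|_{L^2_x}\qquad(j\in\mathbb Z),
\end{equation}
since squaring, summing in $j$, and using $\sum_j2^{2js_{\gamma\alpha}(r,q)}\|P_jf\|_{L^2_x}^2=\|f\|_{\dot H^{s_{\gamma\alpha}(r,q)}}^2$ then gives \eqref{f-stri1}. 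Here \eqref{eq:cormfreq} is the frequency-localized, measure-theoretic counterpart of Theorems~\ref{homo-homo} and~\ref{cor_main}: when $\supp\mu$ is bounded and $j\ge0$ it follows from Theorem~\ref{cor_main} because $\|P_jf\|_{H^{s_{\gamma\alpha}}}\simeq2^{js_{\gamma\alpha}}\|P_jf\|_{L^2_x}$, so the new content is the range $j<0$ together with the removal of the boundedness of $\supp\mu$; we prove it in one stroke.

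To prove \eqref{eq:cormfreq}, write $T_jf:=U^\gamma_tP_jf$; then \eqref{eq:cormfreq} is equivalent to $\|T_jT_j^*\|_{L^{q'}_t(\mathrm d\mu;L^{r'}_x)\to L^q_t(\mathrm d\mu;L^r_x)}\lesssim\langle\mu\rangle_\alpha^{2/q}\,2^{2js_{\gamma\alpha}(r,q)}$, where $T_jT_j^*F(t,\cdot)=\int_{\mathbb R}P_j^2U^\gamma_{t-t'}F(t',\cdot)\,\mathrm d\mu(t')$. By the frequency-localized dispersive estimate
\[
\|P_j^2U^\gamma_\tau\|_{L^{r'}_x\to L^r_x}\ \lesssim\ 2^{2jb}\,(1+2^{\gamma j}|\tau|)^{-b},\qquad b:=d\Big(\tfrac12-\tfrac1r\Big),
\]
obtained by interpolating $\|P_j^2U^\gamma_\tau\|_{L^1_x\to L^\infty_x}\lesssim2^{jd}(1+2^{\gamma j}|\tau|)^{-d/2}$ --- stationary phase, using that the Hessian of $|\xi|^\gamma$ is nondegenerate on $\supp\psi(2^{-j}|\cdot|)$, i.e.\ that $\gamma\ne1$ --- with $\|P_j^2U^\gamma_\tau\|_{L^2_x\to L^2_x}\le1$, and since $s_{\gamma\alpha}(r,q)=b-\frac{\gamma\alpha}q$, the matter is reduced to the scalar kernel estimate
\[
\Big\|\,a\mapsto\int_{\mathbb R}(1+2^{\gamma j}|t-t'|)^{-b}\,a(t')\,\mathrm d\mu(t')\,\Big\|_{L^{q'}(\mathrm d\mu)\to L^q(\mathrm d\mu)}\ \lesssim\ \langle\mu\rangle_\alpha^{2/q}\,2^{-2j\gamma\alpha/q}.
\]
For this, decompose the kernel dyadically about the scale $\rho_0:=2^{-\gamma j}$: a piece of size $\simeq1$ on $\{|t-t'|\le\rho_0\}$, and pieces of size $\simeq(2^{\gamma j}2^\ell)^{-b}$ on $\{|t-t'|\simeq2^\ell\}$ for $2^\ell\ge\rho_0$. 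For a kernel of size $\simeq A$ supported on $\{|t-t'|\le R\}$, the growth condition $\mu((t-R,t+R))\le\langle\mu\rangle_\alpha R^\alpha$ gives, via the Schur test, an $L^{q'}(\mathrm d\mu)\to L^{q'}(\mathrm d\mu)$ bound $\lesssim A\langle\mu\rangle_\alpha R^\alpha$, and via H\"older an $L^{q'}(\mathrm d\mu)\to L^\infty$ bound $\lesssim A(\langle\mu\rangle_\alpha R^\alpha)^{1/q}$; interpolating these yields an $L^{q'}(\mathrm d\mu)\to L^q(\mathrm d\mu)$ bound $\lesssim A(\langle\mu\rangle_\alpha R^\alpha)^{2/q}$. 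Applying this to each dyadic piece and summing the geometric series in $\ell\ge-\gamma j$ produces exactly $\langle\mu\rangle_\alpha^{2/q}2^{-2j\gamma\alpha/q}$ --- \emph{provided} $b>\frac{2\alpha}q$, i.e.\ provided the \emph{strict} form of \eqref{con-ad} holds.

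The main obstacle is therefore the endpoint $\frac d2(\frac12-\frac1r)=\frac\alpha q$. There $b=\frac{2\alpha}q$, each dyadic piece of the kernel contributes $\simeq\langle\mu\rangle_\alpha^{2/q}2^{-2j\gamma\alpha/q}$ with no decay in $\ell$, and the naive summation over scales diverges logarithmically, so the triangle inequality over scales is too crude. To close the estimate at the endpoint one must replace that summation by the bilinear real-interpolation scheme of Keel--Tao~\cite{KeelTao} --- exactly as in the proof of Theorem~\ref{thm_main} / Theorem~\ref{cor_main}, now carried out for the weighted kernel $(1+2^{\gamma j}|t-t'|)^{-b}$ against $\mathrm d\mu$: the off-diagonal decay $(2^{\gamma j}|t-t'|)^{-b}$ combined with the $\alpha$-dimensional growth of $\mu$ supplies precisely the logarithmic room the interpolation needs. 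Once \eqref{eq:cormfreq}, hence \eqref{f-stri1}, is established throughout \eqref{con-ad} for $2<r<\infty$ and $q<\infty$, the case $r=\infty$, $q<\infty$ follows by the complex interpolation noted at the outset, completing the proof.
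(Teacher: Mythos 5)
Your proposal follows essentially the same route as the paper's: Littlewood--Paley plus Minkowski in $j$ reduce \eqref{f-stri1} to the frequency-localized estimate, which is exactly Proposition \ref{lem_discrete_Str1}, and that estimate is proved by $TT^*$, the dispersive bound \eqref{250309_1101}, an $\alpha$-dimensional Schur/Young inequality against $\mathrm d\mu$ (your per-scale dyadic decomposition, with interpolation between the $L^{q'}\to L^{q'}$ and $L^{q'}\to L^\infty$ bounds for each piece, is an equivalent packaging of Lemmas \ref{young} and \ref{kernel1}), and the Keel--Tao bilinear interpolation at the endpoint. One inessential difference: on the equality line $\frac d2(\frac12-\frac1r)=\frac\alpha q$ with $q>2$ the paper does not invoke Keel--Tao but the restricted weak-type Young inequality (part (B) of Lemma \ref{young} together with \eqref{weak1}); your route --- obtaining only $(q,r)=(2,r_\ast)$ from the bilinear scheme and recovering the rest of the line by interpolation with a trivial estimate --- also works, but you should say so explicitly, since your geometric sum over scales genuinely diverges everywhere on that line, not just at $q=2$.

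The one step that does not work as written is your treatment of $r=\infty$, $q<\infty$. You propose to recover it by complex interpolation between the case $r<\infty$ and the case $q=\infty$; but $1/r$ varies affinely along an interpolation segment, so a convex combination of exponent pairs with $1/r>0$ can never reach $1/r=0$, and the $q=\infty$ anchor you use (Sobolev embedding $\dot H^{d/2-d/r}\hookrightarrow L^r_x$) itself fails at $r=\infty$. To handle $r=\infty$ one must instead apply Proposition \ref{lem_discrete_Str1} directly (it is valid for $r=\infty$ away from $(q,r)=(2,\infty)$) and then confront the summation over $j$ without a square function; the paper is also silent on this point (Lemma \ref{lem_freq_loc} is stated only for $r<\infty$), so this is an edge case rather than a defect in your main argument. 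For $2\le r<\infty$ your proof is complete modulo the endpoint machinery you explicitly defer to Section \ref{sec_endpoint}.
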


\subsection{Inhomogeneous estimates}  The adjoint operator of   $U^\gamma_t: L^2_x\to L_t^q( \mathrm{d}\mu ; L_x^r)$ is given by 
\[  F\to \int e^{-is(-\Delta)^{\gamma/2}} F(s)\,\mathrm d\mu(s), \]
where $F(s)=F(\cdot, s)$. 
In view of $TT^*$ argument 
(see Section~\ref{sec_hom}),  the boundedness of  $U^\gamma_t$ from   $L^2_x$ to $L_t^q( \mathrm{d}\mu ; L_x^r)$ is equivalent to the boundedness from $L_t^{q'}( \mathrm{d}\mu ; L_x^{r'})$ to $L_t^q( \mathrm{d}\mu ; L_x^r)$  of   the operator
\[ \int \mathrm{e}^{i(t - s)(-\Delta)^{\gamma/2}} F(s)\,\mathrm{d}\mu(s), \]
which is closely related to  the  solution  of the  inhomogeneous equation 
$
i\partial_t u +  (-\Delta)^{\gamma/2} u=F. 
$

 We now consider the estimate of the  form
\begin{align}\label{f-inhom}
	&\Big\| \int \mathrm{e}^{i(t - s)(-\Delta)^{\gamma/2}} F(s)\,\mathrm{d}\mu(s) \Big\|_{L_t^q(\mathrm{d}\mu; L_x^{r})} \le C \| F\|_{L_s^{\tilde{q}'}(\mathrm{d}\mu ; L_x^{\tilde{r}'})}.
\end{align}
This of course includes the usual inhomogeneous Strichartz estimate when $\mu$ is the Lebesgue measure.  
In view of the condition \eqref{alpha-dim} and a standard scaling heuristic, it is natural to impose a scaling condition 
\Be 
\label{scaling} 
\sigma_{\alpha}^\gamma(\tilde{r}, r, \tilde{q}, q) =0, 
\Ee
where 
\begin{align}
\label{con-inh-s}
	\sigma_{\alpha}^\gamma(\tilde{r}, r, \tilde{q}, q) &=d \Big(1 - \frac1{\tilde{r}} - \frac1r\Big) - \gamma  \Big(\frac{\alpha}{\tilde{q}} + \frac{\alpha}{q}\Big). 
\end{align}

The constraint \eqref{scaling}  can be justified by an exact scaling argument when $\mu$ is the Lebesgue measure (see, e.g., \cite{Vil}).   For a general measure, although a change of variables is not available as in the smooth setting, the assumption \eqref{alpha-dim} provides  a  scaling inequality as if the inequality  
\begin{align}\label{psd-scaling}
  \int h(t)\,\mathrm d\mu(t) \lesssim \lambda^{\alpha}    \int h(\lambda t) \,\mathrm d\mu(t), \quad(\lambda>0) 
\end{align}
held true for positive $\mu$-measurable functions.  (See Section~\ref{sec:2-1}.)  For certain nice measures, this can be upgraded to two-sided comparability, so that $\lesssim$ can be replaced by $\sim$. In such cases, via the typical scaling argument one can obtain \eqref{scaling}. 

\begin{figure}[t]
\centering
\begin{minipage}[t]{0.45\textwidth}
\centering
\begin{tikzpicture}[scale=4]
  \draw[->] (0,0) -- (1.05,0) node[right] {\( \frac{1}{\widetilde r} \)};
  \draw[->] (0,0) -- (0,1.05) node[above] {\( \frac{1}{r} \)};
  
  \draw[thick] (0,0) rectangle (1,1);

  \draw[densely dotted] (0.5,0) -- (0.5,1);
  \draw[densely dotted] (0,0.5) -- (1,0.5);
  \node at (0.5,-0.1) {\( \frac{1}{2} \)};
  \node at (-0.1,0.5) {\( \frac{1}{2} \)};

  \coordinate (A) at (0.5, 0.5);      
  \coordinate (B) at (0.5, 0.3);    
  \coordinate (C) at (0, 0);          
  \coordinate (D) at (0.3, 0.5);   
  \coordinate (P1) at (0.3, 0.3);       
  \coordinate (P2) at (9/40, 3/8);      
  \coordinate (P3) at (3/8, 9/40);      

  \fill[blue!20] (A) -- (B) -- (P3) -- (P2) -- (D) -- cycle;
  \draw[blue, thick] (A) -- (B) -- (P3) -- (P2) -- (D) -- cycle;
  \draw[densely dotted] (P2) -- (C) ;
   \draw[densely dotted] (P3) -- (C) ;

  \filldraw (A) circle (0.01);
  \filldraw (B) circle (0.01);
  \draw (C) circle (0.01);
  \filldraw (D) circle (0.01);
  \draw (P2) circle (0.01);  
  \draw (P3) circle (0.01);
  
  \draw[blue, thick] (P3) -- (P1) -- (P2);

  \node[right] at (B) {$A$};
  \node[below left] at (C) {$O$};
  \node[above] at (D) {$A'$};
   \node[below right] at (P3) {$C$};
   \node[above left] at (P2) {$C'$};

   \draw[black, densely dotted] (C) -- +(0.5,0.5);
   \draw[black, densely dotted] (P1) -- +(0.2, 0);
    \draw[black, densely dotted] (P1) -- +(0,0.2);
\end{tikzpicture}
\caption{$\frac{d}{2} >\alpha$}
\label{fig_lambda}
\end{minipage}
\hfill
\begin{minipage}[t]{0.45\textwidth}
\centering
\begin{tikzpicture}[scale=4]
  \draw[->] (0,0) -- (1.05,0) node[right] {\( \frac{1}{\widetilde r} \)};
  \draw[->] (0,0) -- (0,1.05) node[above] {\( \frac{1}{r} \)};
  
  \draw[thick] (0,0) rectangle (1,1);

  \draw[densely dotted] (0.5,0) -- (0.5,1);
  \draw[densely dotted] (0,0.5) -- (1,0.5);
  \node at (0.5,-0.1) {\( \frac{1}{2} \)};
  \node at (-0.1,0.5) {\( \frac{1}{2} \)};

  \coordinate (A) at (0.5, 0.5);      
  \coordinate (B) at (0.5, 0.0);    
  \coordinate (C) at (0, 0);          
  \coordinate (D) at (0.0, 0.5);   

  \fill[blue!20] (A) -- (B) -- (C) -- (D) -- cycle;
  \draw[blue, thick] (D) -- (A) -- (B);
  \draw[blue, densely dotted] (D) -- (C) -- (B);

  \filldraw (A) circle (0.01);
  \draw (B) circle (0.01);
  \draw (D) circle (0.01);

  \node[below left] at (C) {$O$};

   \draw[black, densely dotted] (C) -- +(0.5,0.5);
\end{tikzpicture}
\caption{$\frac d2\leq \alpha$}
\label{fig2}
\end{minipage}
\end{figure}

In order to state our result,  for $\alpha\in (0,1]$,  let  $\mathcal Q$ denote the closed quadrangle with vertices 
\Be
\label{aa}
		O:=(0,0), \quad \Big(\frac12, \frac12 \Big),  \quad A:= \Big(\frac12, 0\vee \frac{d-2\alpha}{2d}\Big), \quad A', 
	\Ee
where  we denote $P'=(y,x)$ for  a point $P= (x,y)\in [0,1]\times [0,1]$. 
In addition to  \eqref{scaling}, we also assume $\gamma\ge 2$.  In general, it is not possible to obtain \eqref{f-inhom} without extra regularity when $\gamma <2$ (see Remark  \ref{rem:inhomo}). 
We also let, for $\alpha \in (0, d/\gamma)\cap (0,1]$,\footnote{We impose this condition for simpler presentation. However, it is still possible obtain the estimates even when  $\alpha \ge d/\gamma$.}
\Be
\label{cd}
 \begin{aligned}
	\qquad \quad  \ \  C &= \Big(\frac{d-\gamma \alpha}{2(d-\alpha)} ,\, \frac{(d-2\alpha)(d-\gamma\alpha)}{2d(d-\alpha)} \Big).
\end{aligned}
\Ee
Let $\mathcal H$ be convex hull of $(1/2, 1/2)$, $A, C, C',$ and $A'$. 
Then, the following gives the inhomogeneous estimate \eqref{f-inhom}. 

\begin{thm}\label{thm_inhom1}  
Let $\gamma\ge 2$, $\alpha\in (0, d/\gamma)\cap(0,1]$,  and $2 \le \tilde r, r<\infty$.  
	Suppose that    
	\[
	(1/\tilde r, 1/ r)\in \mathcal H \setminus\{C, C'\}
	\] and  the exponents $q, \tilde q\in [2, \infty]$ satisfy 
		\begin{align}
		0&\le \frac 1q \le \frac 1 {\tilde q\,'} \le  1 - \frac{d}{2\alpha} \Big( \frac1r  - \frac1{\tilde r}\Big),  \hspace{-2.5cm} &&  \text{ if } \  \frac1r \ge \frac 1{\tilde r}, \label{ran_q_up}\\
		&\,\, \frac{d}{2\alpha} \Big(\frac1{\tilde r}- \frac1r \Big) \le \frac 1q \le \frac 1 {\tilde q\,'} \le 1,   \hspace{-2.5cm}  && \text{ if } \  \frac1r \le \frac 1{\tilde r}.\label{ran_q_low}
	\end{align}
 Then, 
	we have the estimate \eqref{f-inhom} and 
	\begin{align}
	\label{inho-est}
		\Big\| \int_{[0,t]} \mathrm{e}^{-i(t - s)(-\Delta)^{\gamma/2}} F(s)\,\mathrm{d}\mu (s) \Big\|_{L_t^q\left( \mathrm{d}\mu; L_x^{r} \right)} 
		\le C  \| F\|_{L_s^{\tilde{q}'}(\mathrm{d}\mu ; L_x^{\tilde{r}' })}.
	\end{align}
\end{thm}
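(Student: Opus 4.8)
\emph{Proof plan.} The plan is to prove the retarded estimate \eqref{inho-est} directly through a bilinear $TT^\ast$-type argument, and then to deduce \eqref{f-inhom} from it by splitting $\int\mathrm d\mu(s)=\int_{\{s<t\}}+\int_{\{s>t\}}$ and using the symmetry of the associated bilinear form (the $s>t$ part reduces, via time reversal and complex conjugation, to the $s<t$ part for the reflected measure, which obeys \eqref{alpha-dim} with the same constant).

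First I would dualize: \eqref{inho-est} amounts to bounding the bilinear form
\[
\mathcal B(F,G)=\int\!\!\int_{\{s<t\}}\big\langle e^{-i(t-s)(-\Delta)^{\gamma/2}}F(s),\,G(t)\big\rangle_{L^2_x}\,\mathrm d\mu(s)\,\mathrm d\mu(t)
\]
by $C\,\|F\|_{L^{\tilde q'}_s(\mathrm d\mu;L^{\tilde r'}_x)}\|G\|_{L^{q'}_t(\mathrm d\mu;L^{r'}_x)}$. Decomposing $F=\sum_j P_jF$ and $G=\sum_k P_kG$ (with low frequencies in the blocks $j,k\le 0$), it suffices to prove such a bound for each pair $(j,k)$ with a gain $2^{-\varepsilon|j-k|}$, so that the double sum converges. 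Because the scaling identity $\sigma^\gamma_\alpha=0$ makes every diagonal block carry exactly the right homogeneity, the diagonal terms produce no Sobolev loss; the off-diagonal gain must come from not always placing the $L^2_x$ endpoint on the coarser block.

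For a fixed pair, say both localized to frequency $\sim 2^j$, I would split the $(s,t)$-integration according to $|t-s|\lesssim 2^{-\gamma j}$ or $|t-s|\gtrsim 2^{-\gamma j}$. On the near-diagonal piece, $F$ and $G$ effectively live on the neighbourhood $E(2^{-\gamma j})$ of $E=\supp\mu$, and (after a harmless smooth localization of $|t-s|$) one estimates it by the homogeneous estimates of Theorems \ref{thm_main} and \ref{cor_main}, in the form \eqref{discrete0}, applied to each of the two factors; this is where \eqref{con-ad} enters, and the vertices $A,A'$ of the quadrangle are precisely the points at which the homogeneous input is sharp for $q=2$. On the far-diagonal piece I would use the frequency-localized dispersive estimate
\[
\big\|e^{i\tau(-\Delta)^{\gamma/2}}P_jh\big\|_{L^a_x}\lesssim 2^{jd(1-\frac2a)}\bigl(1+2^{\gamma j}|\tau|\bigr)^{-d(\frac12-\frac1a)}\|h\|_{L^{a'}_x},\qquad 2\le a\le\infty,
\]
which, for $\gamma\ge 2$ and $|\tau|\gtrsim 2^{-\gamma j}$, is bounded \emph{uniformly in $j$} by $|\tau|^{-d(\frac12-\frac1a)}$; this uniformity is exactly where $\gamma\ge 2$ is needed (for $\gamma<2$ there is a genuine loss, cf.\ Remark \ref{rem:inhomo}). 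Combined with H\"older in $x$, the far-diagonal part of $\mathcal B$ is reduced to a scalar double integral of the type
\[
\int\!\!\int_{\,|t-s|\gtrsim 2^{-\gamma j}}|t-s|^{-\beta}\,\varphi(s)\,\psi(t)\,\mathrm d\mu(s)\,\mathrm d\mu(t),\qquad \beta=d\Bigl(\tfrac12-\tfrac1a\Bigr),
\]
to be controlled by a fractal Hardy--Littlewood--Sobolev inequality for $\mu$: a dyadic decomposition in $|t-s|\sim 2^{-\ell}$ together with $\mu((x-\rho,x+\rho))\le\langle\mu\rangle_\alpha\rho^\alpha$ and Schur's test give a bound by $\langle\mu\rangle_\alpha^{\,2-\frac1{p_1}-\frac1{p_2}}\|\varphi\|_{L^{p_1}(\mathrm d\mu)}\|\psi\|_{L^{p_2}(\mathrm d\mu)}$ whenever $\frac1{p_1}+\frac1{p_2}+\frac\beta\alpha=2$, the restriction $s<t$ only helping. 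Taking $(p_1,p_2)=(\tilde q',q')$ and eliminating $\beta$ via $\sigma^\gamma_\alpha=0$ (which forces $\beta=\frac d\gamma(1-\frac1{\tilde r}-\frac1r)$) reproduces exactly the two systems \eqref{ran_q_up}--\eqref{ran_q_low}; one has $\beta<\alpha$ precisely on the side $\frac1{\tilde r}+\frac1r>\frac{d-\gamma\alpha}{d}$ of the line containing $C=\lambda A$ and $C'=\lambda A'$ with $\lambda=\frac{d-\gamma\alpha}{d-\alpha}\in(0,1)$, and at $C,C'$ one has $\beta=\alpha$, so the near-diagonal part of the above integral diverges logarithmically; hence these two corners are excluded. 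A final bilinear interpolation over $\mathcal H$ completes the proof of \eqref{inho-est} on $\mathcal H\setminus\{C,C'\}$, and \eqref{f-inhom} follows as above.

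The main obstacle is the interplay of two difficulties. First, closing the estimate with \emph{no} Sobolev loss forces the off-diagonal frequency gain $2^{-\varepsilon|j-k|}$, hence a version of both steps in which the dispersive decay is spent against the homogeneous Strichartz estimate with a little room to spare rather than used at a single endpoint. Second, handling the retardation $\{s<t\}$ uniformly up to the boundary of $\mathcal H$, where $q=\tilde q'$ and the Christ--Kiselev lemma is unavailable, requires building the ordering directly into the fractal Hardy--Littlewood--Sobolev step — a one-sided inequality for $\mu$ on $[0,t]$ — and this is what makes \eqref{inho-est} genuinely more delicate than \eqref{f-inhom}. The precise shape of the region $\mathcal H$ and the need to delete only the corners $C,C'$ are exactly the bookkeeping of when both of these can be carried through.
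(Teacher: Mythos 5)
Your overall architecture is close to the paper's: dualize to the bilinear form over $\{0<s\le t\}$, localize in frequency, decompose in $|t-s|$, and combine the frequency-localized dispersive bound with the homogeneous estimates of Theorem~\ref{cor_main} at $A,A'$ and the growth condition \eqref{alpha-dim}, then interpolate; your identification of where $\gamma\ge2$ enters (uniformity in $j$ of the dispersive decay for $|t-s|\gtrsim2^{-\gamma j}$) matches the paper's $\lambda_\alpha=(\gamma/2-1)\alpha(1/\tilde q+1/q)\ge0$. Two minor points: the off-diagonal frequency blocks vanish identically because the propagator commutes with $P_j$, so no $2^{-\varepsilon|j-k|}$ gain is needed; and the restriction $\{s\le t\}$ is harmless throughout because every bound used is a positive kernel bound (this is also how the paper absorbs it), so no Christ--Kiselev issue arises.

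The genuine gap is in the far-diagonal piece, which is exactly where the theorem is delicate. First, your ``fractal Hardy--Littlewood--Sobolev inequality \dots\ whenever $\frac1{p_1}+\frac1{p_2}+\frac\beta\alpha=2$'' does not follow from dyadic decomposition plus Schur's test: at that critical relation each dyadic scale $|t-s|\sim2^{-\ell}$ contributes a uniformly bounded amount, so the crude per-scale bound loses a factor of the number of scales ($\sim\gamma j$), i.e.\ a logarithm of the frequency, which destroys the loss-free estimate. What rescues the critical case is the weak-type Young inequality (the paper's Lemma~\ref{young}\,{\bf(B)} combined with Lemma~\ref{kernel1}), and that requires $1<\alpha/\beta$ and $1<p_1,p_2<\infty$; it therefore fails precisely when $\beta=\alpha$ and when $q=\infty$ or $\tilde q=\infty$. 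Second, and more seriously, your exclusion criterion $\beta<\alpha$ fails on the entire closed segment $[C,C']$, not just at its endpoints: on that segment $1/\tilde r+1/r=(d-\gamma\alpha)/d$, so \eqref{scaling} forces $1/\tilde q+1/q=1$ and hence $\beta=\alpha$. The theorem nevertheless keeps the open segment $(C,C')$, and your concluding ``final bilinear interpolation over $\mathcal H$'' cannot recover it from the two-piece near/far split: the Keel--Tao-type bilinear real interpolation the paper uses (Section~\ref{sec_endpoint}, Lemma~\ref{lem_bi_interpol}) acts on the full dyadic family $\{\mathfrak T_k\}_{k\ge0}$ in the weighted $\tilde\ell^\infty_\lambda\to\tilde\ell^1_0$ scale, with the per-$k$ bounds of Lemma~\ref{lem_inhom1} available on all of $\mathcal Q$ so that $\lambda$ takes both signs near the target point; by collapsing all scales $|t-s|\gtrsim2^{-\gamma j}$ into a single integral you have discarded exactly the parameter in which that interpolation takes place. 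Note moreover that for $\gamma=2$ one has $\lambda_\alpha=0$ on all of $\mathcal H$, so this borderline mechanism is needed everywhere, not only near $[C,C']$. To close the argument, keep the decomposition $|t-s|\sim2^{k-\gamma j}$ for every $k\ge0$, prove the per-$k$ estimates with explicit factors $2^{\sigma j}2^{-\lambda k}$ on the whole quadrangle, and sum either directly when $\lambda>0$ or by bilinear real interpolation when $\lambda=0$; the cases $q=\infty$ or $\tilde q=\infty$ are then handled by H\"older on the $t$-supports of length $2^{k-\gamma j}$ rather than by any HLS-type inequality.
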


When $\gamma=2$ and ${\alpha}  =1$,  the result in Theorem \ref{thm_inhom1} coincides with those in \cite{Fos, Vil}.  If we allow additional regularity parameter  as the estimate \eqref{rem:inhomo}, it is possible to extend the range of admissible exponents (see Remark \ref{rem:inhomo}).

We now provide some words on the range of $\tilde{r}, r, \tilde q,$ and $q$ for the estimates  \eqref{f-inhom} and \eqref{inho-est}, which  varies depending on the associated parameters. 
If the point $(1/\tilde{r}, 1/r)$ lies in  $\mathcal{H}$, which  is the convex hull of  the points $A$, $A'$, $C$, $C'$, and $(1/2, 1/2)$ as depicted in Figure~\ref{fig_lambda} and Figure \ref{fig2}, then for each $(\tilde{r}, r) \in \mathcal{H}$, there exists a pair $(\tilde{q}, q)$ such that $\sigma_{\alpha}^\gamma(\tilde{r}, r, \tilde{q}, q)=0$.
In the case $\alpha\ge d/2$, which occurs only for $d=1, 2$, the quadrangle $\mathcal Q$ becomes the  square $[0,1/2]\times [0,1/2]$ (see Figure~\ref{fig2}), and Theorem~\ref{thm_inhom1} holds for $(1/\widetilde r, 1/r)$ in the square. 
When  $\alpha<d/2$, $\mathcal Q$ is a quadrangle properly contained in $[0,1/2]\times [0,1/2]$ (see Figure \ref{fig_lambda}).

When $\alpha = 1$, the region $\mathcal{H}$ coincides with the pentagon described in \cite{Vil}.
In Figure~\ref{fig2} (i.e., when $\alpha \ge d/2$), the points $A$ and $A'$ in Figure~\ref{fig_lambda} become $(1/2, 0)$ and $(0, 1/2)$, respectively. In this case, the admissible set of $(1/\tilde{r}, 1/r)$ is the square with vertices $O$, $(0, 1/2)$, $(1/2, 0)$, and $(1/2, 1/2)$, excluding the closed  boundary line segments connecting $O$ to $(0, 1/2)$ and $O$ to $(1/2, 0)$.

\subsection{$L^2$ local smoothing estimates} 
Unlike the wave operator, the fractional Schrödinger operator for $\gamma > 1$ is known to exhibit a smoothing property when integrated over a bounded space-time region. This phenomenon is referred to as {\it local smoothing}, and it plays an important role in the study of nonlinear dispersive equations (see, for example, \cite{KPV1, KPV2}). In particular, the local smoothing effect for the KdV equation goes back to the work of Kato \cite{Kato}. The sharp $L^2$ local smoothing estimate for the Schrödinger operator was independently obtained by Sjölin \cite{Sjo} and Vega \cite{Vega1, Vega2}. Local smoothing for more general dispersive equations was studied by Constantin and Saut \cite{CS}.

\newcommand{\R}{\mathbb{R}}
\newcommand{\B}{\mathrm{B}}
\newcommand{\dimA}{\dim_{\mathrm{A}}}
\newcommand{\Ncov}{N}

Let  $\mathbb{B}^d$ denote a unit ball in $\mathbb R^d$.  We now consider local smoothing estimates for $U_t^\gamma$ in the similar settings discussed above.  
 We have the following smoothing estimate relative to a fractal measure $\rd \mu(t)$.

\begin{thm}\label{thm_LS}
Let $\gamma >1$.  
Suppose that   $\mu$ is a measure  $\mu$ with $\supp \mu\subset [0,1]$ satisfying  \eqref{alpha-dim}.
Then, for any $s \geq  \alpha 
(1-\gamma)/2$, there exists $C$ such that
\Be 
\label{ls-s}
	\left\| U_t^\gamma f \right\|_{L_{t}^2 (\rd\mu; L^2_x(\mathbb{B}^d))} \leq C \|f\|_{H^s}.
\Ee
\end{thm}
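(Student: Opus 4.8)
The plan is to reduce to Littlewood--Paley pieces and, at each frequency $2^j$, to combine the classical $L^2$ local smoothing estimate for $U_t^\gamma$ with a Frostman-type comparison of $\mathrm{d}\mu$ against Lebesgue measure. First I would fix a nonnegative Schwartz function $\phi$ on $\mathbb R^d$ with $\widehat\phi\in C_c^\infty$ and $\phi\ge \mathbbm{1}_{\mathbb B^d}$ (for instance $\phi=c|\check\rho|^2$ with $\rho\in C_c^\infty$ chosen so that $c|\check\rho|^2\ge\mathbbm{1}_{\mathbb B^d}$), so that $\|U_t^\gamma f\|_{L^2(\mathrm{d}\mu;L^2_x(\mathbb B^d))}^2\lesssim\int\!\!\int\phi(x)|U_t^\gamma f(x)|^2\,\mathrm{d}x\,\mathrm{d}\mu(t)$.

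Writing $f=\sum_{j\ge 0}P_jf$ with $P_0:=P_{\le 0}$ and expanding the square, the cross terms with $|j-k|$ exceeding a fixed constant vanish, since $\widehat{\phi\,U_t^\gamma P_jf}$ is supported in the annulus $|\xi|\sim 2^j$ fattened by $\diam(\supp\widehat\phi)$ and these fattened annuli are then disjoint; hence by the Cauchy--Schwarz inequality in $L^2(\phi\,\mathrm{d}x\,\mathrm{d}\mu)$ it suffices to bound, for each $j$, the quantity $\int_{\mathbb R}g_j\,\mathrm{d}\mu$ with $g_j(t):=\int\phi(x)|U_t^\gamma P_jf(x)|^2\,\mathrm{d}x\ge 0$. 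Summing the single-frequency bounds
\begin{equation}\label{eq:plan-sf}
\int_{\mathbb R}g_j\,\mathrm{d}\mu\lesssim\langle\mu\rangle_\alpha\,2^{\alpha(1-\gamma)j}\|P_jf\|_{L^2}^2\ \ (j\ge 1),\qquad \int_{\mathbb R}g_0\,\mathrm{d}\mu\lesssim\langle\mu\rangle_\alpha\|P_{\le 0}f\|_{L^2}^2
\end{equation}
against $\|f\|_{H^s}^2\sim\|P_{\le 0}f\|_{L^2}^2+\sum_{j\ge1}2^{2sj}\|P_jf\|_{L^2}^2$, with $s=\alpha(1-\gamma)/2$, then gives the estimate for $s=\alpha(1-\gamma)/2$ and hence, by monotonicity of $\|\cdot\|_{H^s}$, for all $s\ge\alpha(1-\gamma)/2$. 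The bound for $g_0$ is trivial, using $\sup_t\|U_t^\gamma P_{\le 0}f\|_{L^2(\mathbb B^d)}\le\|P_{\le 0}f\|_{L^2}$ and $\mu(\mathbb R)=\mu([0,1])\le\langle\mu\rangle_\alpha$ from \eqref{alpha-dim}.

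The key step is the first bound in \eqref{eq:plan-sf}, and here the crucial observation is that $g_j$ is frequency localized in $t$ at scale $2^{-(\gamma-1)j}$, not at the naive scale $2^{-\gamma j}$. Indeed, writing $g_j(t)=c_d\int\!\!\int\widehat\phi(\eta'-\eta)\,e^{it(|\eta|^\gamma-|\eta'|^\gamma)}\widehat{P_jf}(\eta)\overline{\widehat{P_jf}(\eta')}\,\mathrm{d}\eta\,\mathrm{d}\eta'$, the factor $\widehat\phi(\eta'-\eta)$ forces $|\eta-\eta'|\lesssim 1$, so $\bigl||\eta|^\gamma-|\eta'|^\gamma\bigr|\lesssim 2^{(\gamma-1)j}$ on the support of the integrand; this is exactly the time a frequency-$2^j$ wave packet needs to cross the fixed ball $\mathbb B^d$, and it is produced precisely by the spatial localization to a ball of bounded radius. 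Consequently $g_j=g_j*K_j$ with $\widehat{K_j}$ a smooth cutoff to $\{|\tau|\lesssim 2^{(\gamma-1)j}\}$ and $K_j(t)=2^{(\gamma-1)j}K(2^{(\gamma-1)j}t)$, $K$ Schwartz; since $g_j\ge0$,
\[
\int g_j\,\mathrm{d}\mu=\int g_j(s)\Bigl(\int K_j(t-s)\,\mathrm{d}\mu(t)\Bigr)\mathrm{d}s\le\Bigl(\sup_s\int|K_j(t-s)|\,\mathrm{d}\mu(t)\Bigr)\int_{\mathbb R}g_j(t)\,\mathrm{d}t,
\]
and a dyadic decomposition of $\{|t-s|\sim 2^{k}2^{-(\gamma-1)j}\}$ together with \eqref{alpha-dim} gives $\int|K_j(t-s)|\,\mathrm{d}\mu(t)\lesssim\langle\mu\rangle_\alpha\,2^{(\gamma-1)(1-\alpha)j}$ uniformly in $s$. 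Finally, by the classical $L^2$ local smoothing estimate for $U_t^\gamma$ with $\gamma>1$ (this is the place $\gamma>1$ enters; see \cite{Sjo,Vega1,Vega2,CS}),
\[
\int_{\mathbb R}g_j(t)\,\mathrm{d}t=\int_{\mathbb R}\!\int\phi(x)|U_t^\gamma P_jf(x)|^2\,\mathrm{d}x\,\mathrm{d}t\lesssim 2^{-(\gamma-1)j}\|P_jf\|_{L^2}^2
\]
(that $\phi$ is Schwartz rather than compactly supported costs nothing, as $L^2$ local smoothing over a ball of radius $R$ loses only a polynomial factor in $R$). Multiplying the last two displays yields $\int g_j\,\mathrm{d}\mu\lesssim\langle\mu\rangle_\alpha\,2^{(\gamma-1)(1-\alpha)j}2^{-(\gamma-1)j}\|P_jf\|_{L^2}^2=\langle\mu\rangle_\alpha\,2^{-\alpha(\gamma-1)j}\|P_jf\|_{L^2}^2$, which is \eqref{eq:plan-sf}.

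The step I expect to be the main obstacle is precisely recognizing that the relevant time scale is $2^{-(\gamma-1)j}$ rather than $2^{-\gamma j}$: had one used the oscillation scale $2^{-\gamma j}$ of $U_t^\gamma P_jf$ in $t$, the same chain would only give $\int g_j\,\mathrm{d}\mu\lesssim\langle\mu\rangle_\alpha 2^{(1-\gamma\alpha)j}\|P_jf\|_{L^2}^2$, i.e.\ the strictly weaker threshold $s=(1-\gamma\alpha)/2\ge\alpha(1-\gamma)/2$; gaining the correct power $(\gamma-1)(1-\alpha)$ instead of $\gamma(1-\alpha)$ in the Frostman comparison is exactly what makes the exponent sharp (and consistent with the Lebesgue case $\alpha=1$). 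The remaining ingredients --- almost-orthogonality of the frequency pieces, the polynomial-weight form of classical local smoothing, and the summation in $j$ --- are routine.
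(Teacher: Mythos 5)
Your proposal is correct, and it reaches \eqref{ls-s} by a genuinely different route from the paper. The paper first applies the temporal localization lemma (Lemma~\ref{lem_CLV}, imported from \cite{CLV}) to reduce \eqref{lse_j} to intervals $J$ of length $2^{(1-\gamma)j}$, on which only the trivial fixed-time bound $\|U_t^\gamma P_j f\|_{L^2_x(\mathbb B^d)}\le\|f\|_2$ together with $\mu(J)\lesssim \langle\mu\rangle_\alpha 2^{\alpha(1-\gamma)j}$ is used; there the local smoothing gain is hidden inside the kernel estimate \eqref{key-est}, which is what makes the contributions of different $J$'s almost orthogonal. You instead observe directly that $g_j(t)=\int\phi\,|U_t^\gamma P_jf|^2\,\mathrm dx$ is band-limited in $t$ to $\{|\tau|\lesssim 2^{(\gamma-1)j}\}$ (because $\widehat\phi$ has compact support and $\bigl||\eta|^\gamma-|\eta'|^\gamma\bigr|\lesssim 2^{(\gamma-1)j}$ on the relevant set), so that $\int g_j\,\mathrm d\mu\le \bigl(\sup_s\int|K_j(t-s)|\,\mathrm d\mu(t)\bigr)\int g_j\,\mathrm dt\lesssim\langle\mu\rangle_\alpha 2^{(\gamma-1)(1-\alpha)j}\cdot 2^{-(\gamma-1)j}\|P_jf\|_2^2$; both arguments exploit exactly the same time scale $2^{(1-\gamma)j}$ and the same arithmetic $2^{(\gamma-1)(1-\alpha)j}\cdot 2^{-(\gamma-1)j}=\mu(J)\cdot 1$. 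What your version buys is a self-contained proof (no appeal to \cite{CLV}) and a transparent identification of where $\gamma>1$ enters, namely in the classical smoothing bound $\int_{\mathbb R}\int_{\mathbb B^d}|U_t^\gamma P_jf|^2\,\mathrm dx\,\mathrm dt\lesssim 2^{-(\gamma-1)j}\|P_jf\|_2^2$; what the paper's version buys is that Lemma~\ref{lem_CLV} is reused verbatim for Theorem~\ref{thm_LS-homo} and for the sharpness argument in Proposition~\ref{prop:nec2}. One point you should make explicit: the smoothing estimate you invoke is the \emph{global-in-time} one (the references \cite{Sjo,CS} are stated locally in time), but for frequency-localized data it follows from Plancherel in $t$ together with the Agmon--H\"ormander trace lemma, which also justifies your claim that a ball of radius $R$ costs only a factor $R$, so the Schwartz tails of $\phi$ are harmless. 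The remaining steps (almost orthogonality of the $\phi U_t^\gamma P_jf$ via the compact support of $\widehat\phi$, the dyadic Frostman computation for $\sup_s\int|K_j(t-s)|\,\mathrm d\mu(t)$, and the summation in $j$) are all sound.
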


We show that the regularity exponent $\alpha(1 - \gamma)/2$ is sharp, in the sense that there exist a set $E$ and a measure $\mu$ supported on $E$ satisfying the assumptions of Theorem~\ref{thm_LS}, while the estimate \eqref{ls-s} fails unless $s \ge\alpha(1-\gamma)/2$ (see Proposition~\ref{prop:nec2}).
Our result can also be extended to general symbols $P(\xi)$ in place of $|\xi|^\gamma$; see \eqref{250629_1530} and \eqref{250629_1531}.
 
 We prove  Theorem \ref{thm_LS} using a temporal localization lemma, Lemma \ref{lem_CLV} in Section~\ref{sec:l2-ls}. By this lemma, it is possible to obtain estimates in a similar form as \eqref{f-stri} under an assumption weaker than that of bounded Assouad characteristic.  
 To this end, let
$ E \subset [0,1] . $
 
 We now recall the notion of  Assouad spectrum  \cite{FraYu}.  For $\theta\in (0,1)$,   the \emph{Assouad spectrum} of $E$ at parameter $\theta$ is given by 
\[
  \dimA^{\theta} E
  \;=\;
  \inf \Bigl\{
    \alpha>0 \;:\; \exists c>0 \text{ s.t. } 
    \forall\, I, 
    \Ncov\!\bigl(E\cap I, |I|^{1/\theta}\bigr)
    \le
    c \Bigl(\frac{|I|}{|I|^{1/\theta}}\Bigr)^{\alpha}
  \Bigr\},
\]
where $I$ runs over any interval in $[0,1]$.   
Note that $
\dim_{\mathrm M}(E)=\lim_{\theta\to 0^+}\dimA^\theta(E)$ and $ 
\dimA(E)=\lim_{\theta\to 1^-}\dimA^\theta(E).$
The Assouad spectrum interpolates between the Minkowski and Assouad dimensions. More precisely, a general comparison principle \cite[Proposition 3.1]{FraYu} states that
\[
\dim_{\mathrm M}(E)\le \dimA^\theta(E)\le 
\min\Bigl\{\frac{\dim_{\mathrm M}(E)}{1-\theta},\,\dimA(E)\Bigr\}.
\]
For instance, for the set \(E_a=\{n^{-a}:n=1,2,3,\dots\}\),  
$
\dimA^\theta(E_a)=\frac{1}{(1+a)(1-\theta)}\wedge 1,
$
and hence \(\dimA^\theta(E_a)=1\) whenever \(\theta\ge \tfrac{a}{1+a}\).

In analogy with the bounded Assouad characteristic,  we define 
\[  [E]_{\alpha, \theta}   =\sup_{I}    \Bigl(\frac{|I|^{1/\theta}}{|I|}\Bigr)^{\alpha}   \Ncov\!\bigl(E\cap I, |I|^{1/\theta}\bigr)
    .\]
   Clearly,  $[E]_{\alpha, \theta}\le [E]_\alpha$. 
    We say $E$ has bounded $(\alpha, \theta)$-Assouad characteristic if $[E]_{\alpha, \theta}<\infty$. 
    Then we have the following theorem.

\begin{thm}\label{thm_LS-homo}
Let $\gamma >1$, and $E\subset [0,1]$.  
Suppose that $E$ has bounded $(\alpha, (\gamma-1)/\gamma)$-Assouad characteristic.
Then, for any $s \geq  
(\alpha-\gamma)/2$, there exists $C$ such that
\Be 
\label{ls-s-homo}
	\sup_{j\ge 1} \left\| U_t^\gamma P_j f \right\|_{L_{t}^2 ((E(2^{-\gamma j}); L^2_x(\mathbb{B}^d))}  +  \big\| U^\gamma_t P_{\le 0} f\big\|_{L_t^2( E(1); L_x^2(\mathbb{B}^d))} \leq C \|f\|_{H^s}.
\Ee
\end{thm}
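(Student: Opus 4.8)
The plan is to reduce \eqref{ls-s-homo} to the single-frequency-block estimate
\[
\big\| U_t^\gamma P_j f \big\|_{L_t^2(E(2^{-\gamma j}); L_x^2(\mathbb B^d))} \le C 2^{\alpha(1-\gamma)j/2}\,\|P_j f\|_{L_x^2},
\qquad j\ge 1,
\]
together with the trivial bound for $P_{\le 0}f$, which follows from the locally constant property (Lemma \ref{equiv}) and $L^2$ boundedness of $U_t^\gamma$ on a fixed unit time interval. Squaring and summing the blocks then gives the full estimate, since $\sum_{j\ge 1} 2^{\alpha(1-\gamma)j}\|P_j f\|_2^2 \lesssim \|f\|_{H^s}^2$ precisely when $s\ge \alpha(1-\gamma)/2$. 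So the whole theorem rests on the displayed single-block bound.

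To prove the single-block bound I would first rescale: after a dilation $t\mapsto 2^{-\gamma j}t$ and $\xi \mapsto 2^j\xi$, the frequency block is pushed to unit frequency, the time set $E(2^{-\gamma j})$ becomes the $O(1)$-neighbourhood of $2^{\gamma j}E$ inside an interval of length $\sim 2^{\gamma j}$, and the spatial ball $\mathbb B^d$ becomes a ball of radius $2^j$. Cover that long time interval by $\sim 2^{\gamma j}$ unit intervals $J$; on each $J$ the key input is the classical sharp $L^2$ local smoothing estimate for $U_t^\gamma$ (Sjölin/Vega type, valid for $\gamma>1$) in the form
\[
\big\| U_t^\gamma g \big\|_{L_t^2(J; L_x^2(B(0,2^j)))} \le C\, 2^{(1-\gamma)j/2}\,\|g\|_{L_x^2}
\]
for $g$ with unit-scale frequency support — here one uses Lemma \ref{lem_CLV}, the temporal localization lemma, to convert the local-in-time smoothing gain into a statement about the enlarged spatial ball. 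Summing over only those unit intervals $J$ that actually meet the fattened set $2^{\gamma j}E$ — there are at most $\sim [E]_{\alpha,(\gamma-1)/\gamma}\,(2^{\gamma j})^{\alpha}\cdot(2^{\gamma j})^{1-\alpha}$... more precisely, at most $C[E]_{\alpha,(\gamma-1)/\gamma}\,2^{\gamma j \alpha}$ of them, by the definition of $[E]_{\alpha,\theta}$ with $\theta=(\gamma-1)/\gamma$ applied at scale $|I|^{1/\theta}=1$, $|I|=2^{\gamma j}$ — and then undoing the scaling yields exactly the factor $2^{\alpha(1-\gamma)j/2}$.

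The main obstacle, and the place where the choice $\theta=(\gamma-1)/\gamma$ is forced, is the counting step: one needs the covering number $N\big(E\cap I, |I|^{1/\theta}\big)$ at precisely the pair of scales $\delta=2^{-\gamma j}$ and $|I|=2^{-j}$ (equivalently, the rescaled pair $1$ and $2^{\gamma j}$), and the ratio of exponents $1/\theta = \gamma$ matches the parabolic-type scaling of $U_t^\gamma$. This is weaker than bounded Assouad characteristic because we only ever probe a single ratio of scales rather than all of them, which is why Theorem \ref{thm_LS-homo} holds under the bounded $(\alpha,(\gamma-1)/\gamma)$-Assouad characteristic hypothesis alone. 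A secondary technical point is to make the gluing of the per-unit-interval estimates clean: since the $L^2$ norms are on disjoint (up to bounded overlap) time intervals $J$ but on a common large spatial ball, one must check that Lemma \ref{lem_CLV} indeed allows replacing $L_x^2(\mathbb B^d)$ by $L_x^2(B(0,2^j))$ at the cost already accounted for, and that the frequency support of $U_t^\gamma P_j f$ restricted to each $J$ is handled without extra loss; this is routine once the localization lemma is in hand, but it is the part requiring care.
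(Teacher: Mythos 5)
Your reduction to the single dyadic-block estimate and your instinct that the hypothesis enters through a covering count at two linked scales are both correct, but the way you deploy the two key ingredients does not work. First, the per-unit-interval estimate you call the ``key input,'' namely $\| U_t^\gamma g \|_{L_t^2(J; L_x^2(B(0,2^j)))} \le C 2^{(1-\gamma)j/2}\|g\|_{L^2}$ for unit-frequency $g$ and a \emph{unit} interval $J$ in the rescaled variables, is false: a unit interval in the rescaled picture corresponds to an interval of length $2^{-\gamma j}$ in the original variables, and on such short intervals the only available bound is Plancherel, $\|U_t^\gamma P_j f\|_{L_t^2(I;L_x^2)}\le |I|^{1/2}\|f\|_2$, with no smoothing gain (one can concentrate all the mass of the solution inside the ball for those times). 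Second, your count of the unit intervals meeting the fattened set, $\lesssim 2^{\alpha\gamma j}$ over the whole long interval, does not follow from the bounded $(\alpha,(\gamma-1)/\gamma)$-Assouad characteristic: that hypothesis only controls $N(E\cap I,\delta)$ when the scales are linked by $\delta=|I|^{1/\theta}=|I|^{\gamma/(\gamma-1)}$, so $\delta=2^{-\gamma j}$ forces $|I|=2^{(1-\gamma)j}$ (your pair $\delta=2^{-\gamma j}$, $|I|=2^{-j}$ is correct only for $\gamma=2$), and it yields $N(E\cap I,2^{-\gamma j})\lesssim 2^{\alpha j}$ per such $I$, not a global bound $N(E,2^{-\gamma j})\lesssim 2^{\alpha\gamma j}$. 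Even if both claims were granted, the exponents do not assemble: with $\alpha=1$, summing Plancherel over all $\sim 2^{\gamma j}$ intervals of length $2^{-\gamma j}$ returns $\|f\|_2$ with no gain at all, whereas the theorem demands $2^{(1-\gamma)j/2}$.

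The missing idea is that the smoothing gain comes from almost-orthogonality in time at the \emph{intermediate} scale $2^{(1-\gamma)j}$, and this is exactly what Lemma \ref{lem_CLV} provides (not an enlargement of the spatial ball, as you suggest). The paper's argument runs: (i) apply Lemma \ref{lem_CLV} with the measure $\chi_{E(2^{-\gamma j})}\,\mathrm{d}t$ to reduce the estimate to a single interval $J$ with $|J|=2^{(1-\gamma)j}$ --- this step encodes that, restricted to $\mathbb B^d$, contributions from distinct such $J$ are almost orthogonal because the group velocity is $\sim 2^{(\gamma-1)j}$; (ii) inside each such $J$, cover $E(2^{-\gamma j})\cap J$ by at most $C[E]_{\alpha,(\gamma-1)/\gamma}2^{\alpha j}$ intervals of length $2^{-\gamma j}$, which is precisely the pair of scales the hypothesis addresses; (iii) apply Plancherel on each, giving $\big(2^{\alpha j}\cdot 2^{-\gamma j}\big)^{1/2}\|f\|_2=2^{(\alpha-\gamma)j/2}\|f\|_2\le 2^{\alpha(1-\gamma)j/2}\|f\|_2$. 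Your proposal omits step (i) as an orthogonality device and tries to compensate with a smoothing gain at the wrong scale, which is why it cannot close.
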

The regularity exponent $(\alpha - \gamma)/2$ is sharp, in the same sense of the regularity exponent of Theorem~\ref{thm_LS} (see Proposition~\ref{prop:nec2}).

\subsubsection* {Organization}  In Section \ref{sec_hom}, we are mainly concerned with the homogeneous estimates. The inhomogeneous estimates are shown 
in   Section \ref{sec:inho}. The local smoothing estimates are discussed in Section \ref{sec:l2-ls}.  

\vspace{-5pt}

\subsubsection*{Notation}  Throughout the paper, by $C$ we denote constants which may vary from line to line.  We also use $A \lesssim B$ to mean $A \le CB$ for harmless constants $C$.

\section{Homogeneous estimates}\label{sec_hom}
 
In this section, we prove Theorem \ref{thm_main} and Theorem \ref{cor_main}.  
We begin by recalling  \eqref{l-p} and $P_{\le 0}= {\rm Id} - \sum_{j>0} P_j$.

\subsection{Proof of Theorem \ref{thm_main}}  The following proposition contains the key estimates for the proofs of Theorem \ref{thm_main} and \ref{cor_main}. 

\begin{prop}\label{lem_Str} Let  $\alpha\in (0,1]$ and $\gamma\in \mathbb R_{>0} \setminus \{1\}$, and  $2\le q,r \leq \infty$ with $(d/\alpha, q,r)\neq (2 ,2,\infty)$.  Suppose that $E$ has bounded $\alpha$-Assouad characteristic. Then, for $j\in \mathbb Z$,  we have
	\Be
	\label{250309_0149c} 
		 \| U_{t}^\gamma P_j f\|_{L_{t}^q(E(2^{-\gamma j})  ;L^r_x)} \leq C 2^{(\frac d2 - \frac dr-\frac \gamma q)j}  [E]_\alpha^\frac1q \|f\|_{2}  
	\Ee
	whenever \eqref{con-ad}  holds.
\end{prop}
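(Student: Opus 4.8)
The plan is to convert \eqref{250309_0149c} into an equivalent discrete, scale‑free inequality and then to prove the latter by a $TT^\ast$ argument together with interpolation, the only delicate point being an endpoint handled by a Keel--Tao type bilinear estimate. By the locally constant property (Lemma~\ref{equiv}) the left side of \eqref{250309_0149c} is comparable to $\|U^\gamma_\tau P_j f\|_{\ell^q_\tau(\mathcal E_j;L^r_x)}$ for a maximally $2^{-\gamma j}$‑separated set $\mathcal E_j\subset E$. Rescaling in $x$ (by $2^j$) and in $t$ (by $2^{\gamma j}$) then removes the frequency scale, and since $\mathcal F:=2^{\gamma j}\mathcal E_j$ is $1$‑separated with $N(\mathcal F\cap I,1)\lesssim[E]_\alpha|I|^{\alpha}$ for $|I|\ge1$, matters reduce to proving: for any $1$‑separated $\mathcal F\subset\mathbb R$ with $N(\mathcal F\cap I,1)\le[\mathcal F]|I|^{\alpha}$ ($|I|\ge1$),
\[
\big\|\,e^{it(-\Delta)^{\gamma/2}}\widetilde P\,g\,\big\|_{\ell^q_t(\mathcal F;L^r_x)}\le C[\mathcal F]^{1/q}\|g\|_{2}
\]
whenever $(q,r)$ satisfies \eqref{con-ad}, where $\widetilde P$ is a fixed projection to $|\xi|\sim1$.

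The analytic input is the dispersive estimate $\big\|e^{it(-\Delta)^{\gamma/2}}\widetilde P\big(e^{is(-\Delta)^{\gamma/2}}\widetilde P\big)^{\!\ast}\big\|_{L^1_x\to L^\infty_x}\lesssim(1+|t-s|)^{-d/2}$, which follows from stationary phase because the Hessian of $|\xi|^{\gamma}$ is nondegenerate on $\{|\xi|\sim1\}$ exactly when $\gamma\ne 0,1$; interpolating with the trivial $L^2_x\to L^2_x$ bound gives $\|\cdot\|_{L^{r'}_x\to L^r_x}\lesssim(1+|t-s|)^{-d(1/2-1/r)}$ for $2\le r\le\infty$, so for $t,s\in\mathcal F$ the kernel has size $\lesssim\min\{1,|t-s|^{-d(1/2-1/r)}\}$.

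I would then establish the inequality at the extreme exponents and interpolate, noting that the factor $[\mathcal F]^{1/q}$ is reproduced automatically by Riesz--Thorin (all endpoints having domain $L^2_x$). At $q=\infty$ (any $r$) it is Bernstein's inequality together with $L^2$‑unitarity. For $q\ge2$ with \eqref{con-ad} strict, a $TT^\ast$ reduction leads to controlling $\sup_\tau\sum_{\sigma\in\mathcal F}\|e^{i(\tau-\sigma)(-\Delta)^{\gamma/2}}\widetilde P^{2}\|_{L^{r'}_x\to L^r_x}$, and grouping $\sigma$ by the dyadic size of $|\tau-\sigma|\ge1$ and invoking the $\alpha$‑growth of $\mathcal F$ turns this into $[\mathcal F]\sum_{k\ge0}2^{k(\alpha-d(1/2-1/r))}$, which converges precisely because $\alpha<d(1/2-1/r)$; on the part of the endpoint line $\frac\alpha q=\frac d2(\frac12-\frac1r)$ with $q>2$ the same $TT^\ast$ reduction leads instead to a Hardy--Littlewood--Sobolev inequality on the $\alpha$‑dimensional set $\mathcal F$ with kernel $|t-s|^{-2\alpha/q}$, valid exactly for $q>2$. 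Interpolating these estimates fills all of \eqref{con-ad}.

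The genuinely delicate point — and the main obstacle — is the remaining endpoint $q=2$, which occurs only at $r=\tfrac{2d}{d-2\alpha}$ and only when $\alpha<d/2$: there the $TT^\ast$ kernel is of critical order $|t-s|^{-\alpha}$ against an $\alpha$‑dimensional set, so Schur testing (or Hardy--Littlewood--Sobolev) loses a logarithm in $\diam\mathcal F$, and genuine cancellation must be used. I expect to treat it by the Keel--Tao bilinear interpolation scheme adapted to fractal time: decompose the $TT^\ast$ operator dyadically in $|t-s|$, estimate each piece bilinearly on $\ell^2_tL^{a'}_x$ using both the $L^2_x$ and the dispersive $L^1_x\to L^\infty_x$ bounds, and interpolate among these (off‑)diagonal bilinear bounds so that the dyadic pieces sum, with the $\alpha$‑growth of $\mathcal F$ playing the role the Lebesgue measure plays in the classical argument. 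The excluded triple $(d/\alpha,q,r)=(2,2,\infty)$ is precisely the case $\alpha=d/2$, $r=\infty$, namely the forbidden Keel--Tao endpoint, and is therefore necessarily omitted. Undoing the rescaling finally yields \eqref{250309_0149c}.
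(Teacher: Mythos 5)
Your proposal is correct and follows essentially the same route as the paper: discretization to a $2^{-\gamma j}$-separated set via the locally constant property, a $TT^*$ argument combining the stationary-phase dispersive bound with the $\alpha$-Assouad counting estimate (the paper phrases your Schur/HLS steps as strong- and weak-type discrete Young inequalities, Lemmas~\ref{young} and~\ref{kernel}), and Keel--Tao bilinear real interpolation for the endpoint $(q,r)=(2,r_\ast)$. The only cosmetic difference is that you rescale to $j=0$ whereas the paper tracks the $j$-dependence directly, a reduction it explicitly notes is available.
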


 Let $M$ be an invertible affine map on $\mathbb R$, i.e., $M(t)=at+b$ for some $a\neq 0$ and $b\in \mathbb R$. 
Then, from the definition   \eqref{assouad} one can easily see that 
\Be 
 [M(E)]_\alpha=[E]_\alpha.
\Ee 
Using this and scaling argument, one can also deduce  the estimate  \eqref{250309_0149c} for any $j$ from that for $j=0$. 
However, we provide a direct proof keeping track of the bounds in terms of $j$.

Let $\tilde\psi\in C_c^\infty(1/2, 2)$  such that $\tilde\psi=1$ on $\supp \psi$. Let $\tilde P_j f$ be defined by ${\tilde P_j f}=(\tilde \psi(|\xi|/2^{j}) \widehat f\,)^\vee$.

\begin{proof}[Proof of Theorems \ref{thm_main} and \ref{homo-homo}] 
The estimate \eqref{250309_0149c} implies
	\[	 \| U_{t}^\gamma P_j f\|_{L_{t}^q(E(2^{-\gamma j})  ;L^r_x)} \leq C 2^{(\frac d2 - \frac dr-\frac \gamma q)j}  [E]_\alpha^\frac1q \|\tilde P_jf\|_{2}  .\]
	Since $ 2^{(\frac d2 - \frac dr-\frac \gamma q)j}  [E]_\alpha^\frac1q \|\tilde P_j f\|_{2} \le C\|f\|_{\dot H^{s_\gamma (q,r)}}$ for all $j\in \mathbb Z$, 
this proves Theorem  \ref{homo-homo} via Proposition \ref{lem_Str}. 

Since $2^{(\frac d2 - \frac dr-\frac \gamma q)j}  [E]_\alpha^\frac1q \|\tilde P_j f\|_{2} \le C\|f\|_{H^{s_\gamma (q,r)}}$ for $j\ge 1$, 
 thanks to Proposition \ref{lem_Str}, we only need to consider $U_t^\gamma P_{\le 0} f$.  
Using  Berstein's inequality and Plancherel's theorem, we have 
\Be 
\|U_t^\gamma P_{\le 0} f \|_{L_x^r}\le C\|P_{\le 0} f \|_{L_x^2}
\Ee
for  $r\ge 2$.  Since $E$ is a bounded set, we have $|E(1)|\le (2+\diam E)[E]_\alpha$. Thus,  it follows that 
\[ \| U_t^\gamma P_{\le 0} f\|_{L_t^q(E(1); L_x^r)} \le C[E]_\alpha^\frac1q\|P_{\le 0} f \|_{L_x^2} \le C\|f\|_{H^{s_\gamma (q,r)}}. \qedhere\]
\end{proof}

To prove Proposition \ref{lem_Str}, we discretize the estimate \eqref{250309_0149c}. To this end, 
 let $\mathcal E_j= \{ \tau \}$  denote   a maximally $2^{-\gamma j }$-separated subset of   $E$ as in the introduction.  
For each $\tau\in  \mathcal E_j$, we set 
\[ I_{\tau}= (\tau - 2^{1-\gamma j }, \tau+ 2^{1-\gamma j }).\]  
Then, it is clear  that
\begin{align}
\label{inE}
E\subset 
	  E(2^{-\gamma j}) \subset \bigcup_{\tau \in  \mathcal E_j } I_{\tau}.
\end{align}

 To discretize the estimate, we make use of  the following lemma, which is based on a locally constant property of $U_t^\gamma P_j$.

 \begin{lem}\label{lem_250901}  Let $1\le q,r\le \infty$, and $\tau\in \mathbb R$.  Then,  for  $j\in \mathbb Z$, we have
 \Be 
 \label{equiv}
 \| U_t^\gamma P_j f\|_{L_t^q(I_{\tau}; L_x^r)}\sim 2^{-j \frac \gamma q} \|U_{\tau}^\gamma P_j f\|_{L_x^r},
 \Ee
 with the implicit constants independent of $j$. 
 \end{lem} 
 
 \begin{proof}  Let $t\in  I_\tau$. 
 After scaling, we write 
\begin{align*}
	U_t^\gamma P_j f(x) 
	= \int_{\mathbb{R}^d} \e^{i2^{\gamma j }(t-\tau)|\xi|^\gamma} \e^{i(2^{\gamma j }\tau|\xi|^\gamma+2^j\langle x, \xi\rangle)} \psi(|\xi|)2^{dj}\widehat{f}(2^j \xi)\,\mathrm{d}\xi.
\end{align*}
Expanding $\e^{i2^{\gamma j }(t-\tau)|\xi|^\gamma} \psi(|\xi|)$ in a Fourier series on $[-\pi, \pi)^d$, we have  
\[
	\e^{i2^{\gamma j }(t-\tau)|\xi|^\gamma} \psi(|\xi|) = \sum_{\ell\in\mathbb{Z}^d} C_\ell \e^{i \langle\ell,\xi\rangle},
\]
where $C_\ell = \int \e^{i(2^{\gamma j }(t-\tau)|\xi|^\gamma - \langle\ell, \xi\rangle)} \psi(|\xi|) \,\mathrm{d}\xi$.  Since $|t-\tau|\le 2^{1-\gamma j}$, by routine integration by parts,   it is easy to see
\[
	|C_\ell| \leq \mathbf c_\ell:=C_N (1+|\ell|)^{-N}, \quad \forall N>0,
\]
with $C_N$  independent of $j$ and $t\in  I_{\tau} $. Thus, for $t\in I_{\tau} $, we have
\begin{align}\label{250309_0134}
	|U_t^\gamma P_j f(x)| \le  \sum_{\ell\in\mathbb{Z}^d} \mathbf c_\ell |U_{\tau}^\gamma P_j f(x+2^{-j}\ell)|.
\end{align}
Since $|I_{\tau}|\sim 2^{-\gamma j}$,  using \eqref{250309_0134} and Minkowski's inequality,  we obtain
\[\| U_t^\gamma P_j f\|_{L_t^q(I_{\tau}; L_x^r)}\le  C 2^{-j \frac \gamma q} \|U_{\tau}^\gamma P_j f\|_{L_x^r}\]
for a constant $C$. 

To prove the reverse inequality, we note that the roles of $t$ and $\tau$  \eqref{250309_0134} can be interchanged.  
Then, the rest of argument is identical to that for the opposite  inequality. 
 \end{proof}

From \eqref{inE},   it follows that
\begin{align}\label{250308_1600}
	\|U_t^\gamma P_j f\|_{L_t^q(E(2^{-\gamma j});  L_x^r)} \leq \Big( \sum_{\tau \in  \mathcal E_j } \| U_t^\gamma P_j f\|_{L_t^q(I_{\tau}; L_x^r)}^q \Big)^{1/q}.
\end{align}
Combining \eqref{equiv} and  \eqref{250308_1600} yields 
\Be 
\label{discrete}
\|U_t^\gamma P_j f\|_{L_t^q(E(2^{-\gamma j});  L_x^r)} \leq C 2^{- j \frac \gamma q}   \| U_{\tau}^\gamma P_j f\|_{\ell_{\tau}^q( \mathcal E_j; L^r_x)}.
\Ee
 Therefore,   the desired estimate \eqref{f-stri} in Theorem \ref{thm_main}  follows once  the following proposition is established. 

\begin{prop}\label{lem_discrete_Str}   Let $\gamma\in \mathbb R_{>0} \setminus \{1\}$. Let  $\alpha\in (0,1]$ and  $2\le q,r \leq \infty$ with $(d/\alpha, q,r)\neq (2 ,2,\infty)$. Suppose that $E$ has bounded $\alpha$-Assouad characteristic and $\mathcal E_j$  is  a $2^{-\gamma j }$-separated subset of   $E$. Then we have
	\Be
	\label{250309_0149} 
		 \| U_{\tau}^\gamma P_j f\|_{\ell_{\tau}^q( \mathcal E_j  ;L^r_x)} \leq C 2^{jd(\frac 12 - \frac 1r)} [E]_\alpha^\frac1q \|f\|_{2}
	\Ee
	with $C$ independent of  the choice of  $\mathcal E_j$ whenever \eqref{con-ad}  holds.
\end{prop}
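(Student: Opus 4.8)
The plan is to run a $TT^*$-type argument combined with a stationary-phase / dispersive estimate, carefully interfaced with the fractal covering bound $[E]_\alpha$. First I would reduce to the case $j=0$ by scaling: replacing $f$ by the rescaled function and $\mathcal E_j$ by $2^{\gamma j}\mathcal E_j$ (which is then $1$-separated), the affine invariance $[M(E)]_\alpha = [E]_\alpha$ absorbs the change in the set, and the $2^{jd(1/2-1/r)}$ factor is exactly the Bernstein loss from the rescaling of $P_j$. So it suffices to prove $\|U_\tau^\gamma P_0 f\|_{\ell^q_\tau(\mathcal E_0;L^r_x)}\le C[E]_\alpha^{1/q}\|f\|_2$ for a $1$-separated set $\mathcal E_0\subset E$ with frequencies localized to $|\xi|\sim 1$.

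The core is the case $r=\infty$, where one controls $\|U_\tau^\gamma P_0 f\|_{\ell^q_\tau L^\infty_x}$. Here I would dualize in $\tau$ (and $x$): testing against a sequence $\{g_\tau\}$, the square of the relevant bilinear form becomes $\sum_{\tau,\tau'}\langle U_{\tau'}^\gamma P_0\widetilde P_0 U_\tau^{\gamma,*} g_\tau, g_{\tau'}\rangle$, whose kernel is $K(\tau-\tau',x-y)=\int e^{i(\langle x-y,\xi\rangle+(\tau-\tau')|\xi|^\gamma)}\psi^2(|\xi|)\,d\xi$. The standard fixed-time dispersive estimate for $\gamma\ne 1$ gives $\|K(t,\cdot)\|_{L^\infty_x}\lesssim (1+|t|)^{-d/2}$ (nonstationary phase when $|t|\lesssim 1$, stationary phase of rank $d$ for $|t|\gtrsim 1$ since the Hessian of $|\xi|^\gamma$ is nondegenerate on $|\xi|\sim 1$ for $\gamma\ne 1$). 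Then I must bound the Schur-type norm of the matrix $a_{\tau\tau'}=(1+|\tau-\tau'|)^{-d/2}$ acting on $\ell^{q'}\to\ell^q$ over $\mathcal E_0$. This is exactly where the fractal structure enters: for each dyadic scale $2^k$, the number of pairs with $|\tau-\tau'|\sim 2^k$ is controlled by $\#(\mathcal E_0\cap I)\lesssim [E]_\alpha 2^{k\alpha}$ for intervals $I$ of length $2^k$ (since $\mathcal E_0$ is $1$-separated, $\#(\mathcal E_0\cap I)\le N(E\cap I,1)\lesssim [E]_\alpha|I|^\alpha$), so the dyadic blocks of $a$ have $\ell^{q'}\to\ell^q$ norm $\lesssim 2^{-kd/2}[E]_\alpha^{1/q}2^{k\alpha/q'}$ — wait, I need to be careful: the Young/Schur bound for a block supported at separation $\sim 2^k$ with row/column sums $\lesssim 2^{-kd/2}[E]_\alpha 2^{k\alpha}$ gives operator norm $\lesssim ([E]_\alpha 2^{k(\alpha-d/2)})$ on $\ell^2$, and an interpolation/Schur test with the sparse-counting weight refines this to the $[E]_\alpha^{1/q}$ power with the geometric series in $k$ converging precisely when $d/2>\alpha/(\text{the exponent dictated by }q)$, i.e. under the admissibility condition \eqref{con-ad} with the endpoint $(d/\alpha,q,r)=(2,2,\infty)$ excluded. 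Summing the geometric series over $k\ge 0$ yields the claim for $r=\infty$.

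For general $2\le r<\infty$ I would interpolate: the case $r=2$ is trivial by Plancherel and $1$-separation (almost orthogonality of the $U_\tau^\gamma P_0 f$ after a further frequency decomposition, or simply $\|U_\tau^\gamma P_0 f\|_{L^2_x}=\|P_0f\|_{L^2_x}$ combined with the $\ell^q\hookrightarrow\ell^\infty$ bound for each term — more precisely one gets $\ell^2_\tau L^2_x$ only after exploiting that the phases separate, which again costs a factor from $\#(\mathcal E_0\cap I)$), and then real or complex interpolation between the $(q,r)=(\cdot,2)$ and $(q,\infty)$ estimates, tracking the $[E]_\alpha^{1/q}$ dependence through the interpolation (the power $1/q$ is interpolation-consistent since it is linear in $1/q$). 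The admissibility region \eqref{con-ad} is the image under interpolation of the two boundary cases. I expect the main obstacle to be the $r=\infty$ Schur estimate at the sharp line $d/2\,(1/2-1/r)=\alpha/q$: getting the geometric series to converge (rather than diverge or merely be bounded with a logarithmic loss) requires the interpolation against the sparse counting measure to be done with the optimal exponent, and the excluded endpoint $(d/\alpha,q,r)=(2,2,\infty)$ is exactly the Keel–Tao-type endpoint where this bilinear/Schur argument degenerates and would need the more delicate $\epsilon$-removal or real-interpolation endpoint machinery — which is why it is assumed away in the hypothesis.
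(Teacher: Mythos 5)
Your outline for the non-endpoint cases is essentially the paper's argument: a $TT^*$ reduction, the fixed-time dispersive bound $\|K(t,\cdot)\|_\infty\lesssim(1+|t|)^{-d/2}$ from the nondegenerate Hessian of $|\xi|^\gamma$, a dyadic decomposition in $|\tau-\tau'|$, the counting bound $\#(\mathcal E_0\cap I)\lesssim[E]_\alpha|I|^\alpha$, and a discrete Schur/Young inequality. (One structural remark: the interpolation should be performed at the level of the kernel bounds, yielding $\|T_{\tau,\tau'}g\|_{L^r}\lesssim 2^{2jd(\frac12-\frac1r)}(1+2^{\gamma j}|\tau-\tau'|)^{-d(\frac12-\frac1r)}\|g\|_{L^{r'}}$, rather than between the final estimates at $r=2$ and $r=\infty$; the $\ell^q_\tau L^2_x$ estimate for finite $q$ is simply false without a $j$-dependent loss, and your parenthetical about "exploiting that the phases separate" does not repair this.)

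The genuine gap is your treatment of the endpoint. You locate the Keel--Tao-type difficulty at $(d/\alpha,q,r)=(2,2,\infty)$ and note that it is "assumed away in the hypothesis." But that hypothesis only removes the case where $r_\ast:=\tfrac{2d}{d-2\alpha}=\infty$. Whenever $d>2\alpha$ (in particular for every $d\ge 3$), the pair $(q,r)=(2,r_\ast)$ with $r_\ast<\infty$ satisfies \eqref{con-ad} with equality and \emph{is} claimed by the proposition. At this point your Schur/Young step degenerates: with $p=q'=2$ and $q=2$ the relevant convolution exponent is $s=1$, so one would need to convolve with a kernel lying only in weak-$\ell^1$ as a bounded map $\ell^2\to\ell^2$, which fails. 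The paper devotes a separate subsection to this case, decomposing the bilinear form $\mathcal B=\sum_k\mathcal B_k$ by the dyadic separation $|\tau-\tau'|\sim 2^{k-\gamma j}$, proving off-diagonal $\ell^2_\tau L^{a'}\times\ell^2_\tau L^{b'}$ bounds for $(1/a,1/b)$ in a neighbourhood of $(1/r_\ast,1/r_\ast)$ with exponential gain/loss $2^{(\alpha-\lambda(a,b))k}$, and then applying bilinear \emph{real} interpolation (\`a la Keel--Tao) to sum in $k$ at the critical exponent. Without this component your argument proves the proposition only off the endpoint line $q=2$, $\tfrac d2(\tfrac12-\tfrac1{r})=\tfrac\alpha2$.
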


The estimate \eqref{250309_0149} may be viewed as a discrete analogue of a Strichartz estimate. Its proof relies on a combination of the standard $TT^*$ argument and the notion of $\alpha$-bounded characteristic. The role of the dimensionality index $\alpha$ in determining the admissible range \eqref{con-reg} will become evident in the course of the proof.

\subsection{Proof of Proposition~\ref{lem_discrete_Str}}  
\label{sec:2-3}
When $r=2$,  \eqref{con-ad}  implies $q=\infty$. In this case, the estimate 
\eqref{250309_0149} follows from Plancherel's theorem. Thus, we need only to consider
$r>2$.

Let us set
\begin{align}\label{r*} r_\ast=\frac{2d}{d-2\alpha}.\end{align}
Recalling the condition \eqref{con-ad}, we distinguish the admissible pairs $(q,r)$ into  two cases, \emph{the non-endpoint $(q, r)\neq (2, r_\ast)$} and  
\emph{the endpoint case $(q,r)=(2, r_\ast)$}.  
Since we are excluding the case $(d/\alpha, q,r)=(2, 2, \infty)$, the endpoint case is  only relevant when  
\[   d >  2 \alpha .\]  
This  is always satisfied  when $d\ge 3$ since $\alpha\in (0,1]$. When $d=2$,  we need to distinguish the cases $\alpha\in (0,1)$ and $\alpha=1$ but  we have $r_\ast=\infty$ in the latter case, which is already excluded.  Also, 
when $d=1$,   we need to distinguish  the cases  $\alpha<1/2$ and   $\alpha\in [ 1/2,1]$.  Nonetheless, for the cases $(d=1, \alpha \in [1/2, 1])$ and $(d=2, \alpha = 1)$, all the desired estimates will be established in the non-endpoint regime. (See Section~\ref{non-end} .)

For the non-endpoint case, we use the standard $TT^*$ argument, while the endpoint case is treated  using the bilinear interpolation method from \cite{KeelTao}.

\subsubsection{Proof of Proposition~\ref{lem_discrete_Str}: Non-endpoint case}
\label{non-end}
Note that the adjoint operator $(U_{\tau}^\gamma P_j)^*$ of $U_{\tau}^\gamma P_j$ is given by 
\begin{align*}
	(U_{t}^\gamma P_j)^* F(\tau) = &(2\pi)^{-d} \int_{\mathbb{R}^d} \e^{i(-t|\xi|^\gamma + \langle x, \xi\rangle)} \psi_j(|\xi|) \widehat {F(\tau)}(\xi)\,\mathrm{d}\xi,
\end{align*}
where $F(\tau)=F(\cdot, \tau)$. 
By duality,  the estimate \eqref{250309_0149}
is equivalent to  
\[ \|\sum_{\tau}  (U_{\tau}^\gamma P_j)^* F(\tau)\|_{L^2_x} \le C 2^{jd(\frac 12 - \frac 1r)} [E]_\alpha^\frac1q \|F\|_{\ell_\tau^{q'}(\mathcal E_j; L^{r'}_x)}, \]
which we can equivalently write as follows: 
\begin{align}
\label{hoho}
	\Big|  \sum_{\tau\in \mathcal E_j}  \Big\langle  \sum_{ \tau'\in  \mathcal E_j }  T_{\tau, \tau'} F(\tau), F(\tau')\Big\rangle\Big|
	\leq &C 2^{2jd(\frac 12 - \frac 1r)} [E]_\alpha^\frac2q \|F\|_{\ell_\tau^{q'}(\mathcal E_j; L^{r'}_x)}^2, 
\end{align}
where
\Be
\label{utut} 
T_{t, s} g = U_{t}^\gamma P_j (U_{s}^\gamma P_j)^* g.\Ee

Thus, by H\"older's inequality  the estimate \eqref{250309_0149} follows if we  show
\begin{align}
\label{ineq_TT*}
\|  \sum_{\tau' \in  \mathcal E_j } T_{\tau, \tau'}  F (\tau')  \|_{\ell_\tau^{q}(\mathcal E_j; L^{r}_x)}
	\leq &C 2^{2jd(\frac 12 - \frac 1r)}[E]_\alpha^\frac2q \|F\|_{\ell_\tau^{q'}(\mathcal E_j; L^{r'}_x)}. 
\end{align}

\begin{lem}\label{rr'} For $t,s\in \mathbb R$, let 
\Be 
\label{gr}
 \mathcal K_r(t, s) =(1+ 2^{\gamma j }|t-s|)^{-d(\frac1 2 - \frac1 r)} .
 \Ee
Then, for $2\le r\le \infty$ and $t,s\in \mathbb R$, we have 
\Be\label{250309_1101}
	\| T_{t, s} g\|_{L_x^r} \le C 2^{2jd(\frac1 2 - \frac1 r)} \mathcal K_r(t, s)\| g\|_{L_x^{r'}}. 
\Ee
\end{lem}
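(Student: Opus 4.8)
The plan is to deduce \eqref{250309_1101} from two endpoint bounds via analytic interpolation: a trivial $L_x^2\to L_x^2$ estimate and a dispersive $L_x^1\to L_x^\infty$ estimate.

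\emph{Reduction to a convolution kernel.} By \eqref{utut}, $T_{t,s}$ is the Fourier multiplier operator with symbol $e^{i(t-s)|\xi|^\gamma}\psi(2^{-j}|\xi|)^2$, hence $T_{t,s}g = K_{t-s}\ast g$ with
\[
K_{\tau}(x) = (2\pi)^{-d}\int_{\mathbb R^d} e^{i(\langle x,\xi\rangle + \tau|\xi|^\gamma)}\,\psi(2^{-j}|\xi|)^2\,\mathrm d\xi .
\]
Since $\psi(2^{-j}|\cdot|)^2$ is bounded uniformly in $j$, Plancherel's theorem gives $\|T_{t,s}g\|_{L_x^2}\le C\|g\|_{L_x^2}$, which is \eqref{250309_1101} at $r=2$ because $\mathcal K_2\equiv 1$.

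\emph{The dispersive bound.} Next I would prove $\|K_{\tau}\|_{L_x^\infty}\le C\,2^{jd}(1+2^{\gamma j}|\tau|)^{-d/2}$, which gives $\|T_{t,s}g\|_{L_x^\infty}\le C 2^{jd}\mathcal K_\infty(t,s)\|g\|_{L_x^1}$, that is, \eqref{250309_1101} at $r=\infty$ in view of \eqref{gr}. Substituting $\xi=2^j\eta$ reduces this to the oscillatory integral estimate
\[
\Big|\int_{\mathbb R^d} e^{i(2^j\langle x,\eta\rangle + 2^{\gamma j}\tau|\eta|^\gamma)}\,\psi(|\eta|)^2\,\mathrm d\eta\Big|\le C\,(1+2^{\gamma j}|\tau|)^{-d/2},
\]
uniformly in $x$. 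On $\supp\psi$ one has $|\eta|\sim 1$, and a direct computation shows the Hessian of $\eta\mapsto|\eta|^\gamma$ has eigenvalue $\gamma(\gamma-1)|\eta|^{\gamma-2}$ in the radial direction and $\gamma|\eta|^{\gamma-2}$ with multiplicity $d-1$, hence is nondegenerate with determinant $\sim 1$ precisely because $\gamma\in\mathbb R_{>0}\setminus\{1\}$. Writing $\lambda=2^{\gamma j}|\tau|$ for the large parameter: if $\lambda\lesssim 1$ the integral is trivially $O(1)$; if $\lambda\gg1$, then on the region where the gradient of the full phase has size $\gtrsim\max(2^j|x|,\lambda)$ one integrates by parts repeatedly to gain arbitrary decay in $\lambda$, while near a (nondegenerate) critical point, which forces $2^j|x|\sim\lambda$, stationary phase yields the factor $\lambda^{-d/2}$; all of this is uniform in $x$. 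Alternatively one may quote the classical oscillatory integral lemma underlying such dispersive estimates (cf.\ \cite{COX}). This uniform-in-$x$ estimate is the only nonroutine point of the proof.

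\emph{Interpolation.} Finally, interpolating (Riesz--Thorin) between the $L_x^2\to L_x^2$ bound and the $L_x^1\to L_x^\infty$ bound with $\theta = 2(\tfrac12-\tfrac1r)$, so that $\tfrac1r=\tfrac{1-\theta}2$ and $\tfrac1{r'}=\tfrac{1+\theta}2$, gives
\[
\|T_{t,s}g\|_{L_x^r}\le C\big(2^{jd}\mathcal K_\infty(t,s)\big)^{\theta}\|g\|_{L_x^{r'}} = C\,2^{2jd(\frac12-\frac1r)}(1+2^{\gamma j}|t-s|)^{-d(\frac12-\frac1r)}\|g\|_{L_x^{r'}},
\]
which is exactly \eqref{250309_1101} recalling the definition \eqref{gr} of $\mathcal K_r$.
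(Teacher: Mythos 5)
Your proof is correct and follows essentially the same route as the paper: Plancherel for the $L_x^2\to L_x^2$ bound, stationary phase (using the nondegeneracy of the Hessian of $|\xi|^\gamma$ for $\gamma\neq 1$ on $|\xi|\sim 1$ after rescaling) for the $L_x^1\to L_x^\infty$ dispersive bound, and Riesz--Thorin interpolation. The extra detail you give on the Hessian eigenvalues and the uniformity in $x$ is a fine elaboration of what the paper leaves implicit.
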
 

\begin{proof} 
The Plancherel theorem gives
\Be\label{250309_1459}
	\| T_{t, s}  g \|_{L_x^2} \leq \| g\|_{L^2_x}
\Ee
for $t,s\in \mathbb R$. Note that the kernel $K_{t, s}$ of  the operator $T_{t, s}$ is given by 
\[  K_{t, s}(x, y)=  (2\pi)^{-d} \int_{\mathbb{R}^d} \e^{i ( (t-s)|\xi|^\gamma + \langle x-y, \xi \rangle)}\psi^2(|\xi|/2^j)\,\mathrm{d}\xi.\]
Since the Hessian matrix of $|\xi|^\gamma$, $\gamma\neq 1$ has non-vanishing determinant when $|\xi|\sim 1$, 
after scaling $\xi\to 2^j\xi$, using  the stationary phase estimate, we have 
\begin{align*}
	\big|  K_{t, s}(x, y)\big| \le C 2^{dj} \min\{ ( 2^{\gamma j }|t - s|)^{-\frac d2}, 1\}, 
\end{align*}
which gives 
\Be\label{250309_1100}
	\| T_{t, s} g \|_{L_x^\infty} \le C 2^{dj}  (1+ 2^{\gamma j }|t - s|)^{-\frac d2} \|g\|_{L_x^1}.
\Ee
Interpolating \eqref{250309_1459} and \eqref{250309_1100} yields  \eqref{250309_1101} for $2\le r\le \infty$ and $t,s\in \mathbb R$.
 \end{proof}

By Minkowski's inequality and \eqref{250309_1101}, the left hand side of \eqref{ineq_TT*} is bounded by 
\begin{align*}
 C 2^{2jd(\frac1 2 - \frac1 r)}\Big\|  \sum_{\tau' \in  \mathcal E_j }  \mathcal K_r(\tau, \tau') \| F(\tau')\|_{L_x^{r'}}  \Big\|_{\ell_\tau^{q}(\mathcal E_j)}.
\end{align*}
Thus,  \eqref{ineq_TT*}  follows if we show \begin{align}\label{250309_1102}
\begin{split}
	\Big\|  \sum_{\tau' \in  \mathcal E_j }  \mathcal K_r(\tau, \tau') \| F(\tau')\|_{L_x^{r'}}  \Big\|_{\ell_\tau^{q}(\mathcal E_j)} 
	\le C [E]_\alpha^\frac2q\| F(\tau)\|_{\ell_\tau^{q'} (\mathcal E_j; L_x^{r'})}.
\end{split}
\end{align}

To prove this estimate,  we use the next lemma, which is a discrete variant of Young's convolution inequality. 

\begin{lem}\label{young}  Let $\mathcal E$ be a discrete subset of $\mathbb R$. 
\begin{itemize}
\item[{\bf (A)}] Suppose  that 
\[  
\sup_{\tau'\in \mathcal E} \|\mathcal K(\cdot, \tau')\|_{\ell^{s}_\tau (\mathcal E)}, \quad  \sup_{\tau \in \mathcal E}    \|\mathcal K(\tau,\cdot)\|_{\ell^{s}_{\tau'} (\mathcal E)} \le B\]  for some $1\le s\le \infty$. Then, we have 
\Be
\label{seq}
\Big\|  \sum_{\tau' \in  \mathcal E }  \mathcal K(\tau, \tau')\, a_{\tau'}  \Big\|_{\ell_\tau^{q}(\mathcal E)} \le C B \| a_\tau \|_{\ell_\tau^{p}(\mathcal E)} 
\Ee
if $1/p-1/q=1-1/s$ and $1\le p, q\le \infty$.  

\item[{\bf (B)}]  Suppose  that 
\[ \sup_{\tau'\in \mathcal E} \|\mathcal K(\cdot, \tau')\|_{\ell^{s,\infty}_\tau (\mathcal E)},\quad \sup_{\tau \in \mathcal E}    \|\mathcal K(\tau,\cdot)\|_{\ell^{s,\infty}_{\tau'} (\mathcal E)}\le B\] 
 for some $1< s< \infty$.  
Then,  we have the estimate  \eqref{seq} 
provided that $1/p-1/q=1-1/s$ and $1< p, q< \infty$.
\end{itemize}
\end{lem}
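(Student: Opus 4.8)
The plan is to reduce the lemma to two elementary endpoint inequalities; throughout, write \(T_{\mathcal K}a(\tau)=\sum_{\tau'\in\mathcal E}\mathcal K(\tau,\tau')\,a_{\tau'}\). \textbf{Part (A).} I would first check the two extreme cases of the line \(1/p-1/q=1-1/s\). When \(p=1\) (so \(q=s\)), writing \(T_{\mathcal K}a=\sum_{\tau'}a_{\tau'}\mathcal K(\cdot,\tau')\) and applying the triangle inequality in \(\ell^s_\tau\) gives \(\bigl\|\sum_{\tau'}a_{\tau'}\mathcal K(\cdot,\tau')\bigr\|_{\ell^s_\tau}\le\sum_{\tau'}|a_{\tau'}|\,\|\mathcal K(\cdot,\tau')\|_{\ell^s_\tau}\le B\|a\|_{\ell^1}\). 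When \(q=\infty\) (so \(p=s'\)), Hölder's inequality in \(\tau'\) gives \(\bigl|\sum_{\tau'}\mathcal K(\tau,\tau')a_{\tau'}\bigr|\le\|\mathcal K(\tau,\cdot)\|_{\ell^s_{\tau'}}\|a\|_{\ell^{s'}}\le B\|a\|_{\ell^{s'}}\), uniformly in \(\tau\). Since the admissible pairs form exactly the segment joining \((1/p,1/q)=(1,1/s)\) to \((1/s',0)\) in \([0,1]^2\), Riesz--Thorin interpolation applied to the fixed linear operator \(T_{\mathcal K}\) yields \eqref{seq} at every intermediate point with constant \(CB\); the degenerate values \(s=1\) (Schur's test) and \(s=\infty\) follow at once from the same two estimates.

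\textbf{Part (B).} I would decompose the kernel by size, \(\mathcal K=\sum_m\mathcal K_m\) with \(\mathcal K_m=\mathcal K\,\mathbf 1_{\{2^{m-1}<|\mathcal K|\le 2^m\}}\), where only the \(m\) with \(2^{m-1}<B\) occur. From \(|\mathcal K_m|\le 2^m\) we get \(T_{\mathcal K_m}:\ell^1\to\ell^\infty\) with norm \(\le 2^m\); from the weak-\(\ell^s\) hypotheses, each row and each column of \(\mathcal K_m\) has at most \(C(B2^{-m})^s\) nonzero entries, so \(\sup_{\tau'}\|\mathcal K_m(\cdot,\tau')\|_{\ell^1_\tau}\) and \(\sup_{\tau}\|\mathcal K_m(\tau,\cdot)\|_{\ell^1_{\tau'}}\) are \(\lesssim B^s2^{m(1-s)}\), whence part (A) with its exponent equal to \(1\) (i.e.\ Schur's test) makes \(T_{\mathcal K_m}\) bounded on \(\ell^p\) with the same norm, and interpolating the two bounds gives \(T_{\mathcal K_m}:\ell^p\to\ell^\infty\) with a norm that decays geometrically as \(2^m\to0\) (using \(p<s'\), which is automatic on the critical line). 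Since \(1/p-1/q=1-1/s>0\) forces \(p<q\), hence \(\ell^p\hookrightarrow\ell^q\), a standard Marcinkiewicz splitting finishes the argument: given \(a\in\ell^p\) and \(\lambda>0\), cut the sum over \(m\) at a \(\lambda\)-dependent level, control the low-size part in \(\ell^\infty\) by \(\lambda/2\) and the high-size part in \(\ell^p\hookrightarrow\ell^q\), and optimize the cut to obtain the weak-type bound \(\|T_{\mathcal K}a\|_{\ell^{q,\infty}}\lesssim B\|a\|_{\ell^p}\) for all \((p,q)\) on the line with \(1<p,q<\infty\); interpolating (Marcinkiewicz) between two such pairs straddling a given target pair upgrades this to the strong estimate \eqref{seq}.

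I expect the main obstacle to be part (B): the exponent relation \(1/p-1/q=1-1/s\) is the \emph{critical} line for this Schur--Young inequality, so each dyadic block of the kernel contributes only a critical-strength bound on \(T_{\mathcal K}\), and the blocks cannot simply be summed in operator norm; it is the layer-cake decomposition of the input, together with the embedding \(\ell^p\hookrightarrow\ell^q\) (valid precisely because \(p<q\)) and the final weak-to-strong interpolation, that both forces and exploits the open conditions \(1<p,q<\infty\). An equivalent and slightly shorter packaging, which I would also mention, records the two weak endpoint bounds \(T_{\mathcal K}:\ell^1\to\ell^{s,\infty}\) and \(T_{\mathcal K}:\ell^{s',1}\to\ell^\infty\), each with norm \(\lesssim B\)---the first because, for \(s>1\), \(\ell^{s,\infty}\) is normable so that \(\bigl\|\sum_{\tau'}a_{\tau'}\mathcal K(\cdot,\tau')\bigr\|_{\ell^{s,\infty}}\lesssim\sum_{\tau'}|a_{\tau'}|\,\|\mathcal K(\cdot,\tau')\|_{\ell^{s,\infty}}\), the second by Hölder's inequality in Lorentz spaces---and then deduces (B) in one step by real interpolation of Lorentz spaces, using \(\ell^{q,p}\hookrightarrow\ell^q\) for \(p\le q\).
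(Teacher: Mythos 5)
Your part (A) is the paper's argument: the two endpoints $\ell^1\to\ell^s$ (triangle inequality) and $\ell^{s'}\to\ell^{\infty}$ (H\"older), followed by complex interpolation along the segment $1/p-1/q=1-1/s$. For part (B), the ``equivalent and slightly shorter packaging'' you append at the end \emph{is} the paper's proof verbatim: the two endpoint bounds $T_{\mathcal K}:\ell^1\to\ell^{s,\infty}$ (normability of $\ell^{s,\infty}$ for $s>1$) and $T_{\mathcal K}:\ell^{s',1}\to\ell^{\infty}$ (Lorentz H\"older), followed by real interpolation. Your primary argument for (B) --- dyadic decomposition of the kernel by entry size, Schur bounds on each piece from the level-set counts, and a Bourgain-type summation to get weak type on the critical line --- is a genuinely different, more self-contained route that avoids Lorentz-space machinery at the cost of more bookkeeping; it is a standard and valid alternative.

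One step of that primary argument is stated imprecisely, though the strategy survives. In the splitting at the cut level $M$, you propose to control the large-entry part via $\ell^p\hookrightarrow\ell^q$ and (implicitly) Chebyshev at exponent $q$. If you do that, the optimization of $M$ produces a bound of the form $\#\{|T_{\mathcal K}a|>\lambda\}\lesssim(\,\cdot\,/\lambda)^{q^2/p}$, i.e.\ weak type at the exponent $q^2/p>q$, not at $q$: the powers of $\lambda$ do not balance to give weak $(p,q)$. The correct bookkeeping is to apply Chebyshev at exponent $p$ directly to the $\ell^p$ bound $\bigl\|\sum_{m>M}T_{\mathcal K_m}a\bigr\|_{\ell^p}\lesssim B^{s}2^{-M(s-1)}\|a\|_{\ell^p}$ (no embedding needed); with the cut chosen so the small-entry part has sup norm $\le\lambda/2$, the resulting exponent $p\,\frac{\epsilon_0+\epsilon_1}{\epsilon_0}$ with $\epsilon_0=1-s/p'$, $\epsilon_1=s-1$ equals exactly the $q$ determined by $1/p-1/q=1-1/s$, and the powers of $B$ combine to give $B^1$. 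With that correction, the weak-to-strong upgrade by Marcinkiewicz interpolation along the (non-vertical) critical line goes through as you describe, so your route is sound.
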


\begin{proof}[Proof of Lemma \ref{young}] We show {\bf (B)} first. Let us set 
\[ S(\{a_{\tau'}\})_\tau= \sum_{\tau' \in  \mathcal E }  \mathcal K(\tau, \tau')\, a_{\tau'}. \]
Then, by H\"older's inequality in Lorentz space we have 
\[ \|S(\{a_{\tau'}\})_\tau\|_{\ell_\tau^\infty} \le \sup_{\tau \in \mathcal E}    \|\mathcal K(\tau,\cdot)\|_{\ell^{s,\infty}_{\tau'} (\mathcal E)} \| a_{\tau'}\|_{\ell^{s',1}_{\tau'}}. \]
On the other hand, Minkowski's inequality gives
\[ \|S(\{a_{\tau'}\})_\tau\|_{\ell_\tau^{s,\infty}} \le \sup_{\tau'\in \mathcal E} \|\mathcal K(\cdot, \tau')\|_{\ell^{s,\infty}_\tau (\mathcal E)} \| a_{\tau'}\|_{\ell^{1}_{\tau'}}. \]
Real interpolation between those two estimates for the operator $S$ yields the estimate \eqref{seq}  whenever $1/p-1/q=1-1/s$  and $1< p, q< \infty$.  

The proof of {\bf (A)} is similar. Replacing the Lorentz space norms with $\ell^p$ norms,  complex interpolation gives  \eqref{seq} if $1/p-1/q=1-1/s$  and $1\le p, q\le \infty$. 
\end{proof}

We now recall that  $\mathcal E_j$ is a maximally   $2^{-\gamma j}$ separated subset of  $E$, which has bounded $\alpha$-Assouad characteristic;
this assumption will be used, sometimes tacitly, in the rest of the section. 
To prove Proposition~\ref{lem_discrete_Str} for the non-endpoint case, we need the following lemma. 

\begin{lem} \label{kernel} Let $r>2$, and let  $\mathcal K_r$  be given by \eqref{gr}. If  $d(\frac1 2 - \frac1 r)> \frac \alpha s $, then we have
\Be 
\label{strong}
\sup_{\tau'\in \mathcal E_j}  \|\mathcal K_r(\cdot, \tau')\|_{\ell^{s}_\tau (\mathcal E_j)}, \quad  \sup_{\tau \in \mathcal E_j}    \|\mathcal K_r(\tau,\cdot)\|_{\ell^{s}_{\tau'} (\mathcal E_j)} \le C[E]_\alpha^\frac1s
\Ee
for a constant $C$, independent of $j$.   Moreover,  if $d(\frac1 2 - \frac1 r)= \frac \alpha s $, then 
\Be \label{weak}
\sup_{\tau'\in \mathcal E_j}  \|\mathcal K_r(\cdot, \tau')\|_{\ell^{s,\infty}_\tau (\mathcal E_j)}, \quad  \sup_{\tau \in \mathcal E_j}    \|\mathcal K_r(\tau,\cdot)\|_{\ell^{s,\infty}_{\tau'} (\mathcal E_j)} \le C[E]_\alpha^\frac1s
\Ee for a constant $C$, independent of $j$.  
\end{lem}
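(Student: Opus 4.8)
The plan is to estimate the $\ell^s$ (and $\ell^{s,\infty}$) norms of the kernel $\mathcal K_r(\cdot,\tau')$ over the maximally $2^{-\gamma j}$-separated set $\mathcal E_j$ by a dyadic decomposition of the "distance from $\tau'$" and by counting how many points of $\mathcal E_j$ can land in each dyadic annulus, using the bounded $\alpha$-Assouad characteristic. By symmetry of $\mathcal K_r$ in its two arguments, it suffices to bound $\sup_{\tau'}\|\mathcal K_r(\cdot,\tau')\|_{\ell^s_\tau(\mathcal E_j)}$, and indeed the scaling invariance $[M(E)]_\alpha=[E]_\alpha$ together with Lemma~\ref{equiv}-type homogeneity means we may as well rescale so that the separation is $1$; I will instead just keep the $2^{-\gamma j}$ scale explicit. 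Fix $\tau'\in\mathcal E_j$. For an integer $k\ge 0$ set $\mathcal A_k=\{\tau\in\mathcal E_j: 2^{k-\gamma j}\le |\tau-\tau'| < 2^{k+1-\gamma j}\}$ and $\mathcal A_{-1}=\{\tau\in\mathcal E_j:|\tau-\tau'|<2^{-\gamma j}\}$; since $\mathcal E_j$ is $2^{-\gamma j}$-separated, $\#\mathcal A_{-1}$ is $O(1)$, and on $\mathcal A_k$ we have $\mathcal K_r(\tau,\tau')\sim 2^{-k d(1/2-1/r)}$ for $k\ge 0$ (and $\sim 1$ on $\mathcal A_{-1}$).

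The key counting step: $\mathcal A_k$ is contained in $E\cap I_k$ for an interval $I_k$ of length $|I_k|\sim 2^{k-\gamma j}$, and its points are $2^{-\gamma j}$-separated, hence
\[
\#\mathcal A_k \le N(E\cap I_k, 2^{-\gamma j}) \le [E]_\alpha \Big(\frac{|I_k|}{2^{-\gamma j}}\Big)^{\alpha} \lesssim [E]_\alpha\, 2^{k\alpha}.
\]
Therefore, for the strong-type bound,
\[
\|\mathcal K_r(\cdot,\tau')\|_{\ell^s_\tau(\mathcal E_j)}^s \lesssim \sum_{k\ge -1} \#\mathcal A_k \cdot 2^{-k d s(1/2-1/r)} \lesssim [E]_\alpha \sum_{k\ge 0} 2^{k(\alpha - d s(1/2-1/r))} + O(1),
\]
and the geometric series converges precisely when $d(1/2-1/r) > \alpha/s$, giving $\|\mathcal K_r(\cdot,\tau')\|_{\ell^s_\tau(\mathcal E_j)}\lesssim [E]_\alpha^{1/s}$, uniformly in $j$ and $\tau'$. (The stray $O(1)$ from $\mathcal A_{-1}$ is absorbed since $[E]_\alpha\gtrsim 1$ for nonempty $E$, as follows from \eqref{assouad} with $\delta=|I|$.) This proves \eqref{strong}, and the bound for $\sup_\tau\|\mathcal K_r(\tau,\cdot)\|_{\ell^s_{\tau'}}$ is identical by symmetry.

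For the borderline case $d(1/2-1/r)=\alpha/s$, the series above is only logarithmically divergent, so the $\ell^s$ norm fails but the weak $\ell^{s,\infty}$ norm survives. Concretely, $\#\{\tau\in\mathcal E_j:\mathcal K_r(\tau,\tau')>\lambda\}$ corresponds to $\{\tau: |\tau-\tau'|\lesssim 2^{-\gamma j}\lambda^{-1/(d(1/2-1/r))}\}$, an interval of that length, whose number of $2^{-\gamma j}$-separated points is $\lesssim [E]_\alpha \lambda^{-\alpha/(d(1/2-1/r))} = [E]_\alpha \lambda^{-s}$; this is exactly the statement that $\|\mathcal K_r(\cdot,\tau')\|_{\ell^{s,\infty}_\tau(\mathcal E_j)}^s \lesssim [E]_\alpha$, i.e. \eqref{weak}, again uniformly in $j$ and $\tau'$, and symmetrically for the other variable. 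The only mild technical point — the \emph{main obstacle}, such as it is — is bookkeeping the endpoints of the annuli $\mathcal A_k$ so that each $\mathcal A_k$ really is covered by a single interval of length comparable to $2^{k-\gamma j}$ (one should take, say, $I_k$ centered at $\tau'$ of radius $2^{k+1-\gamma j}$, and note the dyadic pieces overlap boundedly), plus checking that the supremum over $\tau'$ is genuinely uniform because the constant $[E]_\alpha$ in \eqref{assouad} is taken over \emph{all} intervals $I\subset\mathbb R$, not just subintervals of $[0,1]$ — which is precisely why the global version of the Assouad characteristic was introduced.
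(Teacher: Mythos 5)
Your proposal is correct and follows essentially the same route as the paper's proof: the same dyadic decomposition of $\mathcal K_r$ into annuli, the same count $\#A_k(\tau')\lesssim 2^{k\alpha}[E]_\alpha$ via the bounded $\alpha$-Assouad characteristic, the same geometric series for \eqref{strong}, and the same superlevel-set/interval argument for \eqref{weak}.
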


For $k\ge 0$, we set  
\Be
\label{chi-k}
\chi_k (t)= 
\begin{cases} 
	 \chi_{\{ u \in\mathbb R: 2^{k-1}< 2^{\gamma j } |u|\le  2^k\}}(t),  &\quad k\ge 1,
	  \\[2pt]
	\   \   \ \chi_{\{ u \in\mathbb R:  2^{\gamma j } |u|\le  1\}}(t) ,  &\quad k=0, 
	\end{cases} 
\Ee
and  
\Be 
\label{K-chi}
 \mathcal K_{r,k}(t,s)= \mathcal{K}_{r} (t,s) \chi_{k}(t-s).\Ee 
 Thus, it follows that
 \Be 
\label{SK-chi}
\mathcal K_{r}(t,s)= \sum_{k\ge 0} \mathcal{K}_{r, k} (t,s).
\Ee

\begin{proof}[Proof of Lemma~\ref{kernel}]  To prove the first assertion \eqref{strong}, by symmetry  it suffices to show the first inequality in \eqref{strong}. 
Recalling \eqref{chi-k} and \eqref{gr},  by \eqref{SK-chi} it follows that
\begin{align*} 
 \|\mathcal K_r(\cdot, \tau')\|_{\ell^s(\mathcal E_j)}^s=  \sum_{k\ge 0} \| \mathcal K_{r,k}(\cdot, \tau')\|_{\ell^s(\mathcal E_j)}^s  \leq C \sum_{k\geq0} 2^{-k d(\frac1 2 - \frac1 r)s} \| \chi_{k}(\cdot - \tau')\|_{\ell^s(\mathcal E_j)}^s . 
 \end{align*} 
 Note that $\| \chi_{k}(\cdot - \tau')\|_{\ell^s(\mathcal E_j)}^s= \# A_k (\tau')$ where 
\begin{equation}
\label{akk}
	A_k (\tau') =  
	\begin{cases} \  \{\tau \in  \mathcal E_j  : 2^{k-1} < 2^{\gamma j}|\tau-\tau'| \leq 2^{k }\},   \quad  & k\ge 1,
	\\[4pt]
		 \   \{\tau \in  \mathcal E_j  : |\tau-\tau'| \leq 2^{- \gamma j }\},   \quad & k=0.
	\end{cases} 
	\end{equation} 
	  Since $\mathcal E_j$ is a maximally   $2^{-\gamma j}$ separated subset of  $E$ with bounded $\alpha$-Assouad characteristic, from  \eqref{assouad} it follows 
\Be\label{ak}  \# A_k(\tau')\le C2^{k\alpha}[E]_\alpha, \quad k\ge 0.\Ee Thus,   we obtain
\[ \|\mathcal K_r(\cdot, \tau')\|_{\ell^s(\mathcal E_j)}^s  \le C \sum_{k\geq0} 2^{-k d(\frac1 2 - \frac1 r)s}  2^{k\alpha} [E]_\alpha.\]
 The sum  is bounded by  $C[E]_\alpha$ since  $d(\frac 1 2 - \frac 1 r)s - \alpha >0$. This proves the first inequality in \eqref{strong}. 

 To show   \eqref{weak}, as before,  we need only  to show the first inequality, which  is equivalent to  the estimate  
\[  \# \{  \tau\in \mathcal E_j:  \mathcal K_r(\tau, \tau')>\lambda \} \le  C \lambda^{-s} [E]_\alpha  \] 
with a constant $C$ independent of $\lambda$ and $\tau'$.  Recall \eqref{gr}. Since  $\mathcal K_r\le 1$, it suffices to consider $0<\lambda\le 1$.  Note that  the set 
$\{  \tau\in \mathcal E_j:  \mathcal K_r(\tau, \tau')>\lambda \}$ is contained in an interval of length $C 2^{-\gamma j}  \lambda^{- 1/d(\frac1 2 - \frac1 r)}$.  
Additionally, $\mathcal E_j$ is a maximally   $2^{-\gamma j}$ separated subset of  $E$ with bounded $\alpha$-Assouad characteristic. Since $d(\frac1 2 - \frac1 r)= \frac \alpha s $,  from \eqref{assouad}  
the desired inequality  follows ({\it cf.}, \eqref{ak}). 
\end{proof}

Combining  Lemmas \ref{young}   and   \ref{kernel}, we prove  the estimate 
\eqref{250309_1102} for the non-endpoint case. 

\begin{proof}[Proof of \eqref{250309_1102}]
Note that ${\bf (A)}$ in  Lemma \ref{young}  and \eqref{strong} give
\[\Big\|  \sum_{\tau' \in  \mathcal E_j }  \mathcal K_r(\tau, \tau') \| F(\tau')\|_{L_x^{r'}}  \Big\|_{\ell_\tau^{q}(\mathcal E_j)} 
	\le C [E]_\alpha^\frac1s\| F(\tau)\|_{\ell_\tau^{q'} (\mathcal E_j; L_x^{r'})}.\]
for  $(q,r)\in [2,\infty]\times [2,\infty]$ satisfying  $1/{q'}-1/q=1-1/s$ and  \eqref{con-ad} with strict inequality, i.e., $\frac d2 (\frac 12 - \frac1r) >  \frac \alpha q$.
Thus,  the estimate  
\eqref{250309_1102}  follows. Similarly, by ${\bf (B)}$  in  Lemma \ref{young}  and \eqref{weak}  we obtain 
\eqref{250309_1102} for  $(q,r)\in [2,\infty]\times [2,\infty]$ satisfying $\frac d2 (\frac 12 - \frac1r) =  \frac \alpha q$ and $q\neq 2$.
\end{proof}

Therefore,  we have established the estimate \eqref{250309_0149}  for the non-endpoint case 

\subsubsection{Proof of Proposition~\ref{lem_discrete_Str}: Endpoint case}\label{sec_endpoint}

To show   \eqref{250309_0149} for $ q=2$ and $r=r_\ast$, we make use of the bilinear interpolation argument in \cite{KeelTao}. 
To this end,  recalling \eqref{hoho}, we consider a bilinear form
\[
	\mathcal{B}(F,G) = \sum_{\tau, \tau'\in  \mathcal E_j }  \langle T_{\tau, \tau'}  F(\tau), G(\tau') \rangle_x.
\]
From \eqref{hoho} with $(q,r)=(2, r_\ast)$, we note that  \eqref{250309_0149} for $(q, r)=(2, r_\ast)$ follows from
\Be
\label{goal} 
|\mathcal{B}(F,G)|\le C  2^{2jd(\frac 12 - \frac 1{r_\ast})}  [E]_\alpha  \|F\|_{\ell_\tau^{2}(\mathcal E_j; L^{r'_\ast}_x)}\|G\|_{\ell_\tau^{2}(\mathcal E_j; L^{r'_\ast}_x)}.
\Ee

Recalling  \eqref{chi-k}, we decompose 
\[ \mathcal{B} (F,G)= 
\sum_{k=0}^\infty   \mathcal{B}_k (F,G), \]
where
\begin{align}
\label{sumk} 
	\mathcal{B}_k (F, G) =  \sum_{\tau, \tau'}  \chi_k (\tau - \tau') \langle T_{\tau, \tau'}  F(\tau), G(\tau') \rangle_x,  \quad k\ge 0.  
\end{align}
Using the  bounds in the previous section,  we obtain the following. 

\begin{lem}\label{lem_interpol1}
For $1\le a, b\le \infty$, let 
 \[ \lambda(a, b) = \frac d2 \Big(1 - \frac1a - \frac1b\Big).\] 
If  $(1/{a}, 1/{b})$ are contained in the interior of the quadrangle $\mathcal Q$ $($with vertices $(0,0)$,  $A=(1/2, 1/r_\ast)$,   $(1/2, 1/2)$, and  $A'=(1/r_\ast, 1/2),$ see \eqref{r*}$)$, 
  we have
	\Be
	\label{k-loc}		|\mathcal{B}_k (F, G)| \lesssim 2^{2\lambda(a,b)j } 2^{(\alpha-\lambda(a, b))k}  [E]_\alpha \|F \|_{\ell_\tau^2(\mathcal E_j; L_x^{a'})}\|G \|_{\ell_\tau^2(\mathcal E_j; L_x^{b'})}.
\Ee
\end{lem}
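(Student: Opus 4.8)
The plan is to prove \eqref{k-loc} first for a one–parameter family of exponents lying on the diagonal and on two edges of $\mathcal Q$, where $\mathcal B_k$ is estimated by hand, and then to recover the interior of $\mathcal Q$ by multilinear interpolation. Two algebraic facts will be used repeatedly. First, as in the proof of Lemma~\ref{rr'}, $T_{\tau,\tau'}=U_\tau^\gamma P_j(U_{\tau'}^\gamma P_j)^*$ is the Fourier multiplier $W_{\tau-\tau'}$ with symbol $\psi^2(2^{-j}|\xi|)e^{i(\tau-\tau')|\xi|^\gamma}$; hence $W_\rho^*=W_{-\rho}$ and $W_{\rho_1}W_{\rho_2}=\widetilde W_{\rho_1+\rho_2}$, where $\widetilde W_\rho$ has symbol $\psi^4(2^{-j}|\xi|)e^{i\rho|\xi|^\gamma}$ and, by the same stationary phase argument as in Lemma~\ref{rr'}, satisfies $\|\widetilde W_\rho\|_{L^{p'}_x\to L^p_x}\le C2^{2jd(\frac12-\frac1p)}(1+2^{\gamma j}|\rho|)^{-d(\frac12-\frac1p)}$ for $2\le p\le\infty$. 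Second, by the triangle inequality $\chi_k(\tau,\tau')\chi_k(\tau,\tau'')\neq0$ forces $|\tau'-\tau''|\le 2^{k+1-\gamma j}$, and $\#A_k(\tau)\le C2^{\alpha k}[E]_\alpha$ by \eqref{ak}.

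\emph{Diagonal exponents.} For $a=b=r\in[2,\infty]$ I would estimate $|\mathcal B_k(F,G)|\le\sum_{\tau,\tau'}\chi_k(\tau,\tau')\|T_{\tau,\tau'}\|_{L^{r'}_x\to L^r_x}\|F(\tau)\|_{r'}\|G(\tau')\|_{r'}$, use Lemma~\ref{rr'} together with $\mathcal K_r(\tau,\tau')\lesssim 2^{-d(\frac12-\frac1r)k}=2^{-\lambda(r,r)k}$ on $\supp\chi_k$, and then apply Schur's test to the kernel $\chi_k$ with $\sup_\tau\#A_k(\tau)\lesssim 2^{\alpha k}[E]_\alpha$. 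This gives \eqref{k-loc} at $(1/r,1/r)$ for every $r\in[2,\infty]$, the endpoints $r=2,\infty$ reducing to Plancherel and the $L^1\!\to\!L^\infty$ bound.

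\emph{Edge exponents.} Fix $b\in[2,r_\ast)$ and take $(1/a,1/b)=(1/2,1/b)$. Applying Cauchy--Schwarz in the time index of $F$ gives $|\mathcal B_k(F,G)|\le\|F\|_{\ell^2_\tau L^2_x}\|H\|_{\ell^2_\tau L^2_x}$ with $H(\tau)=\sum_{\tau'}\chi_k(\tau,\tau')W_{\tau'-\tau}G(\tau')$; expanding the square and using $W^*_{\tau''-\tau}W_{\tau'-\tau}=\widetilde W_{\tau'-\tau''}$,
\[
\|H\|_{\ell^2_\tau L^2_x}^2=\sum_{\tau',\tau''}\Big(\sum_\tau\chi_k(\tau,\tau')\chi_k(\tau,\tau'')\Big)\langle \widetilde W_{\tau'-\tau''}G(\tau'),G(\tau'')\rangle_x .
\]
I would bound the inner product by $\|\widetilde W_{\tau'-\tau''}\|_{L^{b'}_x\to L^b_x}\|G(\tau')\|_{b'}\|G(\tau'')\|_{b'}$, use $\sum_\tau\chi_k(\tau,\tau')\chi_k(\tau,\tau'')\le\#A_k(\tau')\lesssim 2^{\alpha k}[E]_\alpha$ while keeping the support restriction $|\tau'-\tau''|\le 2^{k+1-\gamma j}$, and apply Schur's test to the kernel $(1+2^{\gamma j}|\tau'-\tau''|)^{-\beta}$ with $\beta:=d(\frac12-\frac1b)<\alpha$: its row sum is $\sum_{0\le m\le k+1}\#A_m(\tau')2^{-\beta m}\lesssim[E]_\alpha\sum_{m\le k+1}2^{(\alpha-\beta)m}\lesssim[E]_\alpha 2^{(\alpha-\beta)k}$. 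Collecting factors yields $\|H\|_{\ell^2_\tau L^2_x}^2\lesssim 2^{2\beta j}2^{\alpha k}[E]_\alpha\cdot[E]_\alpha 2^{(\alpha-\beta)k}\|G\|_{\ell^2_\tau L^{b'}_x}^2$, hence $|\mathcal B_k(F,G)|\lesssim 2^{\beta j}2^{(\alpha-\beta/2)k}[E]_\alpha\|F\|_{\ell^2_\tau L^2_x}\|G\|_{\ell^2_\tau L^{b'}_x}$, which is \eqref{k-loc} at $(1/2,1/b)$ since $\beta=2\lambda(2,b)$. Performing the Cauchy--Schwarz in the time index of $G$ instead gives \eqref{k-loc} at $(1/a,1/2)$ for $a\in[2,r_\ast)$.

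\emph{Interpolation and the main obstacle.} The exponents now available include the diagonal $D=\{(t,t):0\le t\le\tfrac12\}$, the punctured edge $E_R=\{(\tfrac12,s):\tfrac1{r_\ast}<s\le\tfrac12\}$, and its reflection $E_T$. Any $(1/a,1/b)$ in the interior of $\mathcal Q$ either lies on $D$ or lies strictly inside one of the triangles with vertices $\{O,A,(\tfrac12,\tfrac12)\}$, $\{O,A',(\tfrac12,\tfrac12)\}$; in the latter case it is, by openness, a convex combination of $O\in D$, $(\tfrac12,\tfrac12)\in D$, and some $A_\epsilon=(\tfrac12,\tfrac1{r_\ast}+\epsilon)\in E_R$ (resp. in $E_T$) with $\epsilon>0$ small. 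Since $2\lambda(a,b)$ and $\alpha-\lambda(a,b)$ are affine in $(1/a,1/b)$ and $[E]_\alpha$ occurs to the first power at every one of these points, the multilinear Riesz--Thorin theorem applied to $\mathcal B_k$ delivers \eqref{k-loc} at $(1/a,1/b)$. The crux is the edge step: the clean power $2^{(\alpha-\lambda(a,b))k}$, with no logarithmic loss, hinges on the multiplier identity $W^*_{\tau''-\tau}W_{\tau'-\tau}=\widetilde W_{\tau'-\tau''}$ and on $\chi_k$ confining $|\tau'-\tau''|$ to scale $\lesssim2^{k-\gamma j}$, which truncates the dyadic sum at $m\sim k$, while the strict inequality $d(\frac12-\frac1b)<\alpha$ makes that sum a geometric series dominated by its top term. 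Exactly at the vertex $A$, where $d(\frac12-\frac1{r_\ast})=\alpha$, the sum degenerates to $\sum_{m\le k}1\sim k$ and a logarithm appears — which is precisely why \eqref{k-loc}, and the lemma, is confined to the interior of $\mathcal Q$.
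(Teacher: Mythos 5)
Your proof is correct and follows essentially the same route as the paper: anchor estimates of \eqref{k-loc} on the diagonal and on the edges $(1/2,1/b)$, $(1/a,1/2)$ with $a,b<r_\ast$, obtained from the dispersive bound of Lemma~\ref{rr'} together with the counting bound \eqref{ak} and Schur's test, followed by bilinear interpolation into the interior of $\mathcal Q$. The only organizational difference is in the edge step, where you apply Cauchy--Schwarz globally in $\ell^2_\tau$ and extract the factor $2^{\alpha k}[E]_\alpha$ from $\sum_\tau\chi_k(\tau,\tau')\chi_k(\tau,\tau'')$, whereas the paper first localizes the $t$-supports of $F,G$ (Remark~\ref{interval}) and obtains the same factor via H\"older's inequality on $\|F\|_{\ell^1_\tau(\mathcal E_j;L^2_x)}$ before proving the claim \eqref{claim_endpoint}; the two computations are equivalent.
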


\begin{proof}
The estimate \eqref{k-loc} for $(a,b)=(2,2)$ and $(a,b)=(\infty, \infty)$  is easy to show. Indeed, from \eqref{sumk}  and \eqref{250309_1101}, we have
\[ |\mathcal{B}_k (F, G)|  \le C  2^{2j\lambda(r,r)}  \sum_{\tau, \tau'} \mathcal K_{r,k}(\tau, \tau')  \| F(\tau)\|_{L_x^{r'}}\| G(\tau')\|_{L_x^{r'}}\]
for $2\le r\le \infty$, where    $\mathcal K_{r,k}$ is given by  \eqref{K-chi}.

Therefore,  the Cauchy--Schwarz inequality gives
\Be 
\label{00-22}
 |\mathcal{B}_k (F, G)|  \le 2^{2j\lambda(r,r)}  \Big\|  \sum_{\tau} \mathcal K_{r,k}(\tau, \tau')\| F(\tau)\|_{L_x^{r'}}\Big\|_{\ell^2(\mathcal E_j)}  \|G(\tau')\|_{\ell^2(\mathcal E_j; L_x^{r'})}.
 \Ee
From \eqref{gr}, it follows that $\mathcal K_{r,k} \le  2^{-\lambda(r, r) k}\chi_{k}$.   Since $\| \chi_{k}(\cdot - \tau')\|_{\ell^1(\mathcal E_j)}= \# A_k (\tau')$,  recalling \eqref{ak},  we see that 
\Be \label{kr-norm} \sup_{\tau'\in \mathcal E_j}  \|\mathcal K_{r,k}(\cdot, \tau')\|_{\ell^{1}_\tau (\mathcal E_j)},  \,\,  \sup_{\tau \in \mathcal E_j}    \|\mathcal K_{r,k}(\tau,\cdot)\|_{\ell^{1}_{\tau'} (\mathcal E_j)}   \le  C  2^{(\alpha-\lambda(r, r))k} [E]_\alpha.\Ee
Therefore, applying  {\bf (A)} in  Lemma \ref{young} with $p=q=2$,  we have \eqref{k-loc}, in particular,  for  $(a,b)=(2,2)$ and $(a,b)=(\infty, \infty)$.

Therefore, in perspective  of interpolation, it suffices to show
	\begin{align}
	\label{1st}
		|\mathcal{B}_k (F, G)| 
	&\lesssim 2^{d(\frac12 - \frac1r)j } 2^{( \alpha-\frac d2 (\frac12 - \frac1r) )k} [E]_\alpha \|F\|_{\ell_\tau^2(\mathcal E_j; L_x^2)} \| G\|_{\ell_\tau^2(\mathcal E_j; L_x^{r'})},
	\\[4pt]
	\label{2nd}
	|\mathcal{B}_k (F, G)| 
	&\lesssim 2^{d(\frac12 - \frac1r)j } 2^{( \alpha-\frac d2 (\frac12 - \frac1r) )k}  [E]_\alpha \|F\|_{\ell_\tau^2(\mathcal E_j; L_x^{r'})} \| G\|_{\ell_\tau^2(\mathcal E_j; L_x^2)}
	\end{align}
	for $2\leq r < r_\ast$.  By symmetry, we only have to show \eqref{1st}. Indeed, observe that 
	\[ \mathcal{B}_k (F, G)= \overline{\mathcal{B}_k (G, F)} .\] 
	Thus, \eqref{2nd} follows from \eqref{1st}.

Thanks to the cutoff function $\chi_{k}(\tau - \tau')$, using a standard argument, we may assume that $\supp F(x,\cdot)$ and $\supp G(x,\cdot)$ are contained  in an interval of length $2^k2^{-\gamma j }$ for all $x$ (see Remark \ref{interval} below).

We now claim that
\Be\label{claim_endpoint}
	\Big\| \sum_{\tau'} \chi_{k}(\tau -  \tau') (U_{\tau'}^\gamma P_j)^* G(\tau') \Big\|_{L_x^2} \!
	\lesssim  2^{d(\frac12 - \frac1r)j} 2^{( \frac \alpha 2-\frac d2(\frac12 - \frac1r))k} [E]_\alpha^\frac12 \| G\|_{\ell_{\tau'}^{2}(\mathcal E_j; L_x^{r'})}
\Ee
holds for $2\leq r < r_\ast$ and uniformly in $\tau$.  Assuming this for the moment, we prove \eqref{1st}. 
Observe that
\begin{align*}
	|\mathcal{B}_k (F, G)| & \le \sum_{\tau}\Big| \Big\langle (U_\tau^\gamma P_j)^* F(\tau), \sum_{\tau'} \chi_{k}(\tau -\tau')(U_{\tau'}^\gamma P_j)^* G(\tau')\Big\rangle_x\Big|.
	\end{align*}
	Thus, the  Cauchy--Schwarz inequality and \eqref{claim_endpoint}  give
	\begin{align*}
	|\mathcal{B}_k (F, G)|
	&\leq \sum_{\tau} \| F(\tau)\|_{L_x^2} \Big\| \sum_{\tau'}\chi_{k}(\tau - \tau')(U_{\tau'}^\gamma P_j)^* G(\tau') \Big\|_{L_x^2}
	\\
	&\le C  2^{d(\frac12 - \frac1r)j} 2^{( \frac \alpha 2-\frac d2(\frac12 - \frac1r))k} [E]_\alpha^\frac12  \|F\|_{\ell_\tau^{1}(\mathcal E_j; L_x^2)} 
	 \| G\|_{\ell_{\tau'}^{2}(\mathcal E_j; L_x^{r'})}. 
\end{align*}
Note that we are assuming $\supp F(x,\cdot)$ is contained in an interval $J$ of length  $\sim 2^k2^{-\gamma j }$. 
Since  $\mathcal E_j$ is a $2^{-\gamma j}$-separated  subset of $E$ with bounded $\alpha$-Assouad characteristic, as before,  we have $\#\{\tau\in \mathcal E_j:  \tau\in J\}\le C 2^{\alpha k} [E]_\alpha$. Thus,  H\"older's inequality gives
$\|F\|_{\ell_\tau^{1}(\mathcal E_j; L_x^2)} \le C2^{ \frac\alpha 2 k }[E]_\alpha^\frac12 \|F\|_{\ell_\tau^{2}(\mathcal E_j; L_x^2)}$, and hence \eqref{1st} follows. 

It remains to verify the claim \eqref{claim_endpoint}.  Recalling  \eqref{utut} and \eqref{chi-k} and repeating the similar argument as before ({\it cf.}  \eqref{hoho}, 
we note that 
\eqref{claim_endpoint} follows if we show 
\begin{align*}
	\Big|  \sum_{\tilde \tau\in {\mathcal E}_j}  \Big\langle  \sum_{ \tau'\in{\mathcal E}_j }  \tilde \chi_k(\tilde \tau, \tau') T_{\tilde \tau, \tau'} G(\tau'), G(\tilde\tau)\Big\rangle\Big|
	\lesssim  2^{2d(\frac12 - \frac1r)j } 2^{(\alpha- d(\frac12 - \frac1r))k}  [E]_\alpha \| G\|_{\ell_{\tau'}^{2}({\mathcal E}_j; L_x^{r'})}^2.
\end{align*}
for $2\le r\le r_\ast$, where 
\[    \tilde \chi_k(t, s)=  \chi_k (\tau - t) \chi_k (\tau - s).\]  

By applying  H\"older's,  Minkowski's and Cauchy--Schwarz's inequalities successively and following the same argument as in the proof of Proposition~\ref{lem_discrete_Str}, one sees  that  the matter is reduced to showing 
\[  \Big\| \sum_{ \tau' \in  {\mathcal E}_j }   \tilde \chi_k(\tilde \tau, \tau')  \|T_{\tilde\tau, \tau'} G(\tau')\|_{L_x^{r}} \Big\|_{\ell_{\tilde \tau}^{2}({\mathcal E}_j )} \lesssim [E]_\alpha 2^{2d(\frac12 - \frac1r)j } 2^{(\alpha- d(\frac12 - \frac1r))k} \| G\|_{\ell_{\tau'}^{2}({\mathcal E}_j ; L_x^{r'})}. \]
By \eqref{250309_1101}, this in turn follows from 
\Be 	 
\label{rr-22}
\Big\| \! \sum_{ \tau' \in \mathcal E_j }   \tilde \chi_k(\tilde \tau, \tau') \mathcal K_r(\tilde\tau, \tau') \|G(\tau')\|_{L_x^{r'}} \Big\|_{\ell_{\tilde \tau}^{2}({\mathcal E}_j )}\! \lesssim \![E]_\alpha 2^{(\alpha- d(\frac12 - \frac1r))k} \| G\|_{\ell_{\tau'}^{2}({\mathcal E}_j ; L_x^{r'})}. 
\Ee

Note that $ \tilde \chi_k(\tilde \tau, \tau') =0$ if $|\tilde \tau- \tau'|\ge 2^{k-\gamma j+2}$.  Using the same decomposition as before,  by \eqref{gr} and \eqref{ak}, we obtain 
\[ 
 \|  \tilde \chi_k(\tilde \tau, \cdot)  \mathcal K_r(\tilde \tau, \cdot)\|_{\ell^1_{\tau'}(\mathcal E_j)} \le \! \! \sum_{0\le l\le k+2} \! \| \chi_l (\tilde \tau, \cdot)  \mathcal K_r(\tilde \tau, \cdot) \|_{\ell^1_{\tau'}(\mathcal E_j)} \lesssim \!  [E]_\alpha \! \! \sum_{0\le l\le k+2}  \!  2^{(\alpha- d(\frac12 - \frac1r))l }.
 \]
Since $r<r_\ast$,  it follows that 
\[\sup_{\tilde \tau \in  \mathcal E_j } \| \tilde \chi_k(\tilde \tau, \cdot) \mathcal K_r(\tilde \tau, \cdot)   \|_{\ell_{\tau'}^{1}({\mathcal E}_j )} \lesssim [E]_\alpha 2^{(\alpha- d(\frac12 - \frac1r))k}.   \] 
By symmetry the same estimate also holds for  $\sup_{ \tau'\in  \mathcal E_j }   \| \tilde \chi_k(\cdot, \tau') \mathcal K_r(\cdot, \tau')   \|_{\ell_{\tilde \tau}^{1}({\mathcal E}_j )}$.  Therefore, from  {\bf (A)} in  Lemma \ref{young} with $p=q=2$,  we obtain the desired inequality. 
\end{proof}

\begin{rem}\label{interval}  Let $1\le \tilde q'\le q\le \infty$.    Suppose the estimate  
\Be
\label{Bk}
| \mathcal{B}_k (F,G) |\le B \|F\|_{\ell^{q'}(\mathcal E_j; X)} \|G\|_{\ell^{\tilde q'}(\mathcal E_j; Y)}
\Ee 
holds whenever   $\supp F(x, \cdot)$ and $\supp  G(x, \cdot)$  are contained in an interval of length $2^{k-\gamma j +2}$ for all $x$.  
Here $X$ and $Y$ denote appropriate $L_x^p$ spaces.   Then, the same estimate \eqref{Bk} remains valid without the  assumption on the support of $F, G$. 

Indeed, let  $\mathcal{I}=\{I\}$ be a family  of disjoint intervals $I$ with $|I|=2^k 2^{-\gamma j }$ partitioning $\mathbb R$. Set 
\[ F = \sum_{I\in \mathcal I} F_I, \quad G = \sum_{I\in \mathcal I}G_{I},\] where $F_I(x,t)=F(x,t)\chi_I(t)$ and $G_{I}(x,t)= G(x,t)\chi_I(t)$.
Then,  by the assumption  it follows that 
\begin{align*}
	| \mathcal{B}_k (F,G) | &\le \sum_{I, I'\in \mathcal I: \dist(I,I')\le 2^k2^{-\gamma j }}  |\mathcal{B}_k (F_I,G_{I'}) |.
\\ 
 &\le \sum_{I, I'\in \mathcal I: \dist(I,I')\le 2^k2^{-\gamma j }} B \|F_I\|_{\ell^{q'}(\mathcal E_j; X)} \|G_{I'}\|_{\ell^{\tilde q'}(\mathcal E_j; Y)},
\end{align*}
Since $I$ are disjoint intervals of length $2^{k-\gamma j}$,   H\"older's inequality and disjointness of the intervals yield
\begin{align*}
	| \mathcal{B}_k (F,G) | 
	   &\lesssim  B  \Big( \sum_{I \in \mathcal I} \|F_I\|_{\ell^{q'}(\mathcal E_j; X)}^{q'}  \Big)^{1/{q'}}   \Big( \sum_{I \in \mathcal I}  \|G_I\|_{\ell^{\tilde q'}(\mathcal E_j; Y)}^{q}\Big)^{1/q} 
	   \\
	    &\le B   \|F\|_{\ell^{q'}(\mathcal E_j; X)}    \|G\|_{\ell^{\tilde q'}(\mathcal E_j; Y)} .
\end{align*}
For the second inequality we use the fact that $\tilde q'\le q$. 
\end{rem}

We now use the estimate in Lemma \ref{lem_interpol1}  to get the endpoint estimate via bilinear interpolation.

\subsubsection*{Bilinear interpolation} 
For a sequence $ \mathbf a=\{a_k \}_{k\ge 0}$, for $1\le p\le \infty$,  define 
\[ 
\| \mathbf a\|_{\tilde\ell_\mu^p} = 
\begin{cases}
\Big( \sum_{k\ge 0} ( 2^{\mu k } |a_k|)^p \Big)^\frac1p,  \  & p\neq\infty,
\\
 \ \  \ \sup_{k\ge 0} 2^{\mu k } |a_k|,  \ & p=\infty. 
\end{cases}     \]

We consider a vector-valued bilinear operator that is defined by 
\[
	\mathbf B(F,G) = \Big\{\frac{2^{(\lambda(a,b)-\alpha)k}}{[E]_\alpha}  \mathcal{B}_k(2^{\frac d2 j} F(2^j \cdot, \cdot),2^{\frac d2 j} G(2^j \cdot, \cdot)\Big\}_{k\ge 0}.
\] 
Though not strictly necessary,  the scaled inputs $2^{\frac{d}{2} j} F(2^j \cdot, \cdot)$ and $2^{\frac{d}{2} j} G(2^j \cdot, \cdot)$ are introduced for simplicity, to eliminate the dependence on $j$ in the operator bounds.    By Lemma~\ref{lem_interpol1} and scaling, we have  
\Be\label{vv_operator}
\|\mathbf B(F,G)\|_{\tilde\ell_\infty^{\lambda(a,b)-\alpha}(\mathbb{N}_0)}\le C\|F \|_{\ell_\tau^2(\mathcal E_j; L_x^{a'})}\|G \|_{\ell_\tau^2(\mathcal E_j; L_x^{b'})} 
\Ee
for  $(1/{a}, 1/{b})$ contained in the interior of $\mathcal Q$.  That is to say, 
\[  
\mathbf B: \ell_\tau^2(\mathcal E_j; L_x^{a'}) \times \ell_\tau^2(\mathcal E_j; L_x^{b'}) \to \tilde\ell^\infty_{\lambda(a,b)-\alpha}
\] 
is bounded  for all $(1/a, 1/b)$ contained in the interior of $\mathcal Q$ (the closed quadrangle with vertices in \eqref{aa}). 

By scaling and \eqref{sumk}, the estimate \eqref{goal}  follows if we show  
\Be
\label{bT}
	\mathbf B :  \ell_\tau^2(\mathcal E_j; L_x^{r_\ast'}) \times \ell_\tau^2(\mathcal E_j; L_x^{r_\ast'})  \to \tilde\ell^1_{0}.
\Ee
To this end, we apply a bilinear real interpolation result (\cite{BerLof}) (see  \cite{Janson} for a multilinear generalization). 
\begin{lem}\label{lem_bi_interpol}\cite[Section 3.13.5(b)]{BerLof}
	Let $A_0, A_1, B_0, B_1, C_0, C_1$ be Banach spaces, and the bilinear operator $T$ be such that 
	\begin{align*}
		T : A_{i} \times B_j \to C_{i+j}
	\end{align*}
	is  bounded for $(i,j)=(0,0),$ $(0,1),$ and $(1,0)$. 
	Then, for $0<\alpha_0, \alpha_1, \alpha = \alpha_0+\alpha_1<1$, $1\leq p,q,r\leq\infty$, and $1\leq 1/p+ 1/q$, 
	\[
		T : (A_0, A_1)_{\alpha_0, pr} \times (B_0, B_1)_{\alpha_1, qr} \to (C_0, C_1)_{\alpha, r}
	\]
	is bounded. 
Here, $(A_0, A_1)_{\alpha, p}$	denotes the real interpolation space (in the sense of the $
K$-method) between $A_0$ and  $A_1$.
\end{lem}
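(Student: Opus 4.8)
The plan is to derive this from the equivalence of the $K$- and $J$-descriptions of the real interpolation spaces together with a direct estimate of the $K$-functional of $T(a,b)$. Fix $a\in(A_0,A_1)_{\alpha_0,p_0}$ with $p_0=pr$ and $b\in(B_0,B_1)_{\alpha_1,q_0}$ with $q_0=qr$. Using the equivalence theorem for the $J$- and $K$-methods \cite{BerLof}, I would write $a=\sum_{\mu\in\mathbb Z}u_\mu$ and $b=\sum_{\nu\in\mathbb Z}v_\nu$, with convergence in $A_0+A_1$ and $B_0+B_1$ respectively, and with
\[
\|u_\mu\|_{A_0}\le 2^{\alpha_0\mu}c_\mu,\quad \|u_\mu\|_{A_1}\le 2^{(\alpha_0-1)\mu}c_\mu,\quad \|v_\nu\|_{B_0}\le 2^{\alpha_1\nu}d_\nu,\quad \|v_\nu\|_{B_1}\le 2^{(\alpha_1-1)\nu}d_\nu,
\]
where $(c_\mu)_\mu\in\ell^{p_0}$ and $(d_\nu)_\nu\in\ell^{q_0}$ have norms comparable to $\|a\|_{(A_0,A_1)_{\alpha_0,p_0}}$ and $\|b\|_{(B_0,B_1)_{\alpha_1,q_0}}$.

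Expanding $T(a,b)=\sum_{\mu,\nu}T(u_\mu,v_\nu)$, I would estimate each term using \emph{only} the three hypotheses (with operator norm $\le M$): $\|T(u_\mu,v_\nu)\|_{C_0}\le M\,2^{\alpha_0\mu+\alpha_1\nu}c_\mu d_\nu$ from $A_0\times B_0\to C_0$, and, taking the better of the bounds from $A_1\times B_0\to C_1$ and $A_0\times B_1\to C_1$, $\|T(u_\mu,v_\nu)\|_{C_1}\le M\,2^{\alpha_0\mu+\alpha_1\nu}\,2^{-\max(\mu,\nu)}c_\mu d_\nu$. Then, for each $m\in\mathbb Z$, I would bound $K(2^m,T(a,b))$ by the splitting that sends the pairs with $\max(\mu,\nu)\ge m$ into the $C_1$ component and the rest into the $C_0$ component. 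On the $C_0$ side the positive exponents $\alpha_0,\alpha_1$ make the truncated geometric sums converge, contributing $\lesssim M\,2^{\alpha m}\Phi_m\Psi_m$, where $\alpha:=\alpha_0+\alpha_1$, $\Phi_m=\sum_{\mu\le m}2^{\alpha_0(\mu-m)}c_\mu$, and $\Psi_m$ is the corresponding smoothing of $(d_\nu)$; on the $C_1$ side the gain $2^{-\max(\mu,\nu)}$ absorbs the weight $2^m$, and the remaining series converges \emph{precisely because} $\alpha<1$, again contributing $\lesssim M\,2^{\alpha m}$ times a discrete-convolution smoothing of the sequence $(c_\mu\Psi_\mu)$ and of its transpose. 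Summing up, $2^{-\alpha m}K(2^m,T(a,b))\lesssim M\,\bigl(\Phi_m\Psi_m+\Theta_m+\Theta_m'\bigr)$ with finitely many bilinear terms of this shape.

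Finally, I would take $\ell^r$ in $m$. Each smoothing $(c_\mu)\mapsto(\Phi_m)$, $(d_\nu)\mapsto(\Psi_m)$, and so on, is convolution with a summable geometric kernel, hence bounded on $\ell^{p_0}$, on $\ell^{q_0}$, and on $\ell^{t}$ with $1/t=1/p_0+1/q_0$; and the pointwise product of an $\ell^{p_0}$ sequence with an $\ell^{q_0}$ sequence lies in $\ell^{t}\subset\ell^r$, the inclusion being valid exactly when $1/p_0+1/q_0\ge 1/r$, i.e.\ when $1/p+1/q\ge 1$, which is the stated hypothesis. This yields $\|(2^{-\alpha m}K(2^m,T(a,b)))_m\|_{\ell^r}\lesssim M\,\|a\|_{(A_0,A_1)_{\alpha_0,p_0}}\|b\|_{(B_0,B_1)_{\alpha_1,q_0}}$, i.e.\ $T(a,b)\in(C_0,C_1)_{\alpha,r}$ with the asserted norm. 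I expect the main obstacle to be the bookkeeping that keeps the missing fourth boundedness $A_1\times B_1\to C_2$ out of the argument: the scheme works only because the cutoff at $\max(\mu,\nu)=m$ never forces one to pair the ``bad'' endpoints, and the off-diagonal geometric series converge thanks to $\alpha<1$; one also has to verify that the double series $\sum_{\mu,\nu}T(u_\mu,v_\nu)$ converges in $C_0+C_1$, which follows from the same pair of term bounds.
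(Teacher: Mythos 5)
Your argument is correct, and it is essentially the standard proof of this result. Note that the paper itself gives no proof of Lemma~\ref{lem_bi_interpol}: it is quoted directly from Bergh--L\"ofstr\"om \cite[Section 3.13.5(b)]{BerLof}, so there is nothing in the paper to compare against; what you have written is a faithful reconstruction of the argument behind that exercise. All the key points check out: the $J$-method representation with $\|u_\mu\|_{A_0}\le 2^{\alpha_0\mu}c_\mu$, $\|u_\mu\|_{A_1}\le 2^{(\alpha_0-1)\mu}c_\mu$; the two available $C_1$-bounds combining to the gain $2^{-\max(\mu,\nu)}$ so that the missing $A_1\times B_1\to C_2$ hypothesis is never invoked; the cutoff of $K(2^m,\cdot)$ at $\max(\mu,\nu)=m$, with convergence of the truncated sums governed by $\alpha_0,\alpha_1>0$ on the $C_0$ side and by $\alpha<1$ on the $C_1$ side; and the final step where H\"older gives an $\ell^t$ bound with $1/t=1/(pr)+1/(qr)$ and the embedding $\ell^t\subset\ell^r$ is exactly the hypothesis $1/p+1/q\ge 1$. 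The only point worth making explicit is that the identity $T(a,b)=\sum_{\mu,\nu}T(u_\mu,v_\nu)$ (as opposed to mere convergence of the double series in $C_0+C_1$) uses that $T$ is a single bilinear map defined consistently on $(A_0+A_1)\times(B_0+B_1)$ and separately continuous there; this is the standard convention implicit in the hypothesis and is how Bergh--L\"ofstr\"om set up the statement.
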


We choose  $r_0, r_1,$ such that $(1/r_0, 1/r_0),   (1/r_0, 1/r_1), (1/r_1, 1/r_0)\in \operatorname{int} \mathcal Q$ (see Figure~\ref{fig3}), and 
\[   
\lambda(r_0, r_0)-\alpha >0> \lambda(r_0, r_1) -\alpha.
\]  
Note that $\lambda(r_0, r_1)=\lambda(r_1, r_0)$ by symmetry.  Also, note that 
$   \lambda(r_\ast, r_\ast)=\alpha$, thus   $(1/r_\ast, 1/r_\ast)\in \operatorname{int} \mathcal Q$ (see \eqref{r*}).

\begin{figure}[t]
\centering

\begin{tikzpicture}[scale=4]
  \draw[->] (0,0) -- (1.05,0) node[right] {\( \frac{1}{\tilde r} \)};
  \draw[->] (0,0) -- (0,1.05) node[above] {\( \frac{1}{r} \)};
  
  \draw[thick] (0,0) rectangle (1,1);

 
  \node at (1.0,-0.1) {\( \frac{1}{2} \)};
  \node at (-0.1,1.0) {\( \frac{1}{2} \)};
  \node at (-0.1, 0.575) {\( \frac{1}{r_0}\)};
  \node at (0.575, -0.1) {\(\frac{1}{r_0}\)};
  \node at (0.8, -0.1) {\(\frac{1}{r_1}\)};
  \node at (-0.1, 0.8) {\(\frac{1}{r_1}\)};

  \coordinate (A) at (1.0, 1.0);      
  \coordinate (B) at (1.0, 0.625);    
  \coordinate (C) at (0, 0);          
  \coordinate (D) at (0.625, 1.0);   
  \coordinate (P1) at (0.625, 0.625);       
  \coordinate (R0) at (0.575, 0.575);
  \coordinate (R1) at (0.575, 0.8);
  \coordinate (R2) at (0.8, 0.575);
  \coordinate (P2) at (25/104, 5/13);      
  \coordinate (P3) at (5/13, 25/104);      

  \fill[blue!20] (R0) -- (R1) -- (R2) -- cycle;
  \draw[blue, thick] (A) -- (B) -- (C) -- (D) -- cycle;
  
  \draw[blue, thick] (R0) -- (R1) -- (R2) -- cycle;

  \filldraw (B) circle (0.01);
  \filldraw (D) circle (0.01);
  \filldraw (R0) circle (0.01);
  \filldraw (R1) circle (0.01);
  \filldraw (R2) circle (0.01);
  \filldraw (P1) circle (0.01);

  \node[right] at (B) {$\frac{1}{r_*}$};
  \node[below left] at (C) {O};
  \node[above] at (D) {$\frac{1}{r_*}$};
   
   \draw[black, densely dotted] (C) -- +(1.0,1.0);
   \draw[black, densely dotted] (P1) -- +(0.0,0.375);
   \draw[black, densely dotted] (P1) -- +(0.375,0.0);
   \draw[black, densely dotted] (R0) -- +(0.0,-0.575);
     \draw[black, densely dotted] (R0) -- +(-0.575, 0.0);
     \draw[black, densely dotted] (R2) -- +(0.0, -0.575);
        \draw[black, densely dotted] (R1) -- +(-0.575, 0.0);
\end{tikzpicture}
\caption{Bilinear interpolation for $(1/r_*, 1/r_*)$}
\label{fig3}
\end{figure}
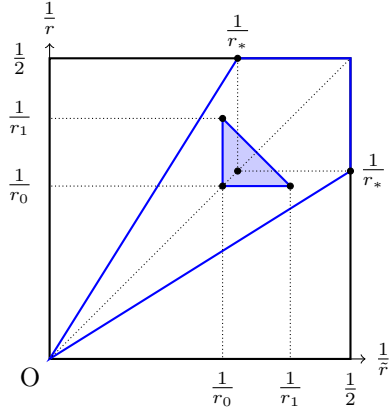

We apply Lemma \ref{lem_bi_interpol},  taking 
\begin{align*}
	A_0, B_0&=  \ell_\tau^2(\mathcal E_j; L_x^{r_0'}),  \hspace{-50pt} &A_1, B_1 &=  \ell_\tau^2(\mathcal E_j; L_x^{r_1'}), 
	\\
	 C_0&= \tilde\ell^\infty_{\lambda(r_0,r_0)-\alpha},  \hspace{-50pt} &C_1 &= \tilde\ell^\infty_{\lambda(r_0,r_1)-\alpha}, 
\end{align*}
and $p=q=2$, $r=1$.
Then, by combining the estimate \eqref{vv_operator}  and  Lemma~\ref{lem_bi_interpol}, 
 it follows that the operator 
 \Be
 \label{hohoho}
\begin{aligned}
\mathbf B:  ( \ell_\tau^2(\mathcal E_j; L_x^{r_0'}),  & \ell_\tau^2(\mathcal E_j; L_x^{r_1'}))_{\alpha_0, 2} \times ( \ell_\tau^2(\mathcal E_j; L_x^{r_0'}), \ell_\tau^2(\mathcal E_j; L_x^{r_1'}))_{\alpha_1, 2} 
\\
&\to (\tilde\ell^\infty_{\lambda(r_0,r_0)-\alpha}, \tilde \ell^\infty_{\lambda(r_0, r_1)-\alpha})_{\alpha, 1}
\end{aligned}
\Ee
is bounded for $0<\alpha_0, \alpha_1,$ $\alpha = \alpha_0+\alpha_1<1$.  Moreover, it is well known that  the real interpolation yields 
\[ 	(\ell_\tau^2(\mathcal E_j;  L_x^{p_0}), \ell_\tau^2(\mathcal E_j; L_x^{p_1}))_{\alpha, 2} = \ell_\tau^2(\mathcal E_j; L_x^{p,2}),\quad
	(\tilde\ell^\infty_{\lambda_0}, \tilde\ell^\infty_{\lambda_1})_{\alpha, 1} = \tilde\ell^1_\lambda,\] 
where $\frac1p = \frac{1-\alpha}{p_0} + \frac\alpha {p_1}$ and $\lambda = (1-\alpha)\lambda_0 + \alpha\lambda_1$.
Combining this and \eqref{hohoho} yields 
\Be
\label{bTT}
\mathbf B:   \ell_\tau^2(\mathcal E_j; L_x^{a', 2}) \times  \ell_\tau^2(\mathcal E_j; L_x^{b', 2})  \to   \ell^1_{\lambda(a,b)-\alpha}
\Ee
for $(1/a, 1/b)$ contained in the interior of the triangle with vertices 
$(1/r_0, 1/r_0),$ $(1/r_0, 1/r_1),$ $(1/r_1, 1/r_0)$ (see Figure~\ref{fig3}).  
In particular, taking $(a, b)=(r_\ast, r_\ast)$, we get \eqref{bT}.  This completes the proof of 
\eqref{250309_0149} for  $(q,r)=(2, r_\ast)$.

\begin{rem}\label{wave-homo} Let $d\ge 2$. The same argument also works for the wave operator, i.e., $\gamma=1$. 
 When $\gamma=1$, the stationary phase method gives
\Be
\label{dis-wave}
	\| T_{\tau, \tau'} g \|_{L_x^\infty} \le C 2^{dj}  (1+ 2^{\gamma j }|\tau - \tau'|)^{-\frac {d-1}2} \|g\|_{L_x^1}
	\Ee
 instead of \eqref{250309_1100}. Thus, a routine adaptation of the previous argument  proves 
 the estimate  \eqref{f-stri}  for $(q,r)\in \mathbb [2,\infty]\times [2,\infty]\setminus\{(2,\infty)\}$, provided that  \eqref{con-reg} holds with $\gamma=1$ and the inequality 
$
 	\frac {d-1}2  (\frac 12 - \frac1r) \ge  \frac \alpha q
$
 holds.  
\end{rem}

\subsection{Proof of Theorems \ref{cor_main} and \ref{cor_m}} 
\label{sec:2-1}
As in Section \ref{sec:2-3},  the proof reduces to establishing the following.

\begin{prop}\label{lem_discrete_Str1} Let $\gamma\in \mathbb R_{>0} \setminus \{1\}$,  $2\le q,r \leq \infty$ and $(q,r)\neq (2,\infty)$.   
Suppose that   $\mu$ is a  measure  satisfying \eqref{alpha-dim}. Then,  for $j\in \mathbb Z$,  we have 
 \Be \label{f-stri1p}   \| U^\gamma_t P_j  f\|_{L_t^q( \mathrm{d}\mu ; L_x^r)} \le C2^{(\frac d2 - \frac dr-\frac{\gamma \alpha}q)j} \langle \mu \rangle_\alpha^{1/q} \|f\|_{2}\Ee
  provided that \eqref{con-ad} is satisfied.   
 \end{prop}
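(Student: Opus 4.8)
The plan is to follow the proof of Proposition~\ref{lem_discrete_Str}, replacing the sum over $\mathcal E_j$ by integration against $\mathrm d\mu$ and the counting bound \eqref{ak} by the growth hypothesis \eqref{alpha-dim}. Since $r=2$ forces $q=\infty$ by \eqref{con-ad}, in which case \eqref{f-stri1p} is immediate from Plancherel's theorem, I may assume $r>2$. By the $TT^\ast$ argument, $U^\gamma_t P_j: L^2_x\to L^q_t(\mathrm d\mu;L^r_x)$ is bounded if and only if $F\mapsto\int T_{t,s}F(s)\,\mathrm d\mu(s)$ maps $L^{q'}_s(\mathrm d\mu;L^{r'}_x)$ to $L^q_t(\mathrm d\mu;L^r_x)$, with $T_{t,s}$ as in \eqref{utut}; so by duality and H\"older's inequality it suffices to show
\[
\Big\|\int T_{t,s}F(s)\,\mathrm d\mu(s)\Big\|_{L^q_t(\mathrm d\mu;L^r_x)}\le C\,2^{2jd(\frac12-\frac1r)}\,2^{-\frac{2\gamma\alpha}{q}j}\,\langle\mu\rangle_\alpha^{2/q}\,\|F\|_{L^{q'}_s(\mathrm d\mu;L^{r'}_x)}.
\]
Feeding in the dispersive bound of Lemma~\ref{rr'}, $\|T_{t,s}g\|_{L^r_x}\le C2^{2jd(\frac12-\frac1r)}\mathcal K_r(t,s)\|g\|_{L^{r'}_x}$, and applying Minkowski's integral inequality, this is reduced to the scalar positive-kernel inequality
\[
\Big\|\int\mathcal K_r(t,s)g(s)\,\mathrm d\mu(s)\Big\|_{L^q_t(\mathrm d\mu)}\le C\,2^{-\frac{2\gamma\alpha}{q}j}\,\langle\mu\rangle_\alpha^{2/q}\,\|g\|_{L^{q'}_s(\mathrm d\mu)}
\]
for nonnegative $g$.

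To handle this, I would first record the measure-theoretic analogue of Lemma~\ref{young}: if $K\ge0$ satisfies $\sup_t\|K(t,\cdot)\|_{L^\sigma(\mathrm d\mu)}\le B$ and $\sup_s\|K(\cdot,s)\|_{L^\sigma(\mathrm d\mu)}\le B$, then $g\mapsto\int K(t,s)g(s)\,\mathrm d\mu(s)$ is bounded $L^p(\mathrm d\mu)\to L^q(\mathrm d\mu)$ with norm $\lesssim B$ whenever $\tfrac1p-\tfrac1q=1-\tfrac1\sigma$ and $1\le p,q\le\infty$, with a weak-type counterpart using $L^{\sigma,\infty}(\mathrm d\mu)$ valid for $1<\sigma,p,q<\infty$; both are proved exactly as in Lemma~\ref{young}, interpolating the trivial bound $L^{\sigma'}(\mathrm d\mu)\to L^\infty(\mathrm d\mu)$ against the $L^1(\mathrm d\mu)\to L^\sigma(\mathrm d\mu)$ bound supplied by Minkowski's inequality. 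Here $p=q'$ and $q=q$ force $\sigma=q/2$, so what remains is to bound $\mathcal K_r$ in $L^{q/2}(\mathrm d\mu)$, uniformly in $t$ and symmetrically in $s$. Splitting $\{s:2^{\gamma j}|t-s|\sim2^k\}$ dyadically as in \eqref{chi-k}, each such set is contained in an interval of length $\sim2^{k-\gamma j}$, so \eqref{alpha-dim} gives it $\mu$-measure $\lesssim\langle\mu\rangle_\alpha2^{(k-\gamma j)\alpha}$ while $\mathcal K_r\sim2^{-kd(\frac12-\frac1r)}$ on it; summing,
\[
\|\mathcal K_r(t,\cdot)\|_{L^{q/2}(\mathrm d\mu)}^{q/2}\lesssim\langle\mu\rangle_\alpha\,2^{-\gamma j\alpha}\sum_{k\ge0}2^{k(\alpha-\frac{dq}{2}(\frac12-\frac1r))}\lesssim\langle\mu\rangle_\alpha\,2^{-\gamma j\alpha}
\]
as long as \eqref{con-ad} holds strictly, and in the borderline case $\tfrac d2(\tfrac12-\tfrac1r)=\tfrac\alpha q$ the same computation gives $\|\mathcal K_r(t,\cdot)\|_{L^{q/2,\infty}(\mathrm d\mu)}\lesssim(\langle\mu\rangle_\alpha2^{-\gamma j\alpha})^{2/q}$, since $\{\mathcal K_r(t,\cdot)>\lambda\}$ lies in an interval of length $\lesssim2^{-\gamma j}\lambda^{-1/(d(\frac12-\frac1r))}$. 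Thus $B\lesssim(\langle\mu\rangle_\alpha2^{-\gamma j\alpha})^{2/q}$, the Young-type lemma applies, and $TT^\ast$ then yields \eqref{f-stri1p} — indeed with $\langle\mu\rangle_\alpha$ replaced by the smaller $\langle\mu\rangle_\alpha^{1/q}$, consistently with Theorem~\ref{cor_main} — whenever $q\neq2$, and also for $q=2,\ r>r_\ast:=2d/(d-2\alpha)$, where $\sigma=1$ and the strong $L^1(\mathrm d\mu)$ bound still converges.

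The only remaining case is the endpoint $(q,r)=(2,r_\ast)$, which arises exactly when $d>2\alpha$: here $\sigma=q/2=1$, $\mathcal K_{r_\ast}$ lies only in weak $L^1(\mathrm d\mu)$, and the Young/Schur mechanism degenerates. For this I would rerun the bilinear real-interpolation argument of Section~\ref{sec_endpoint} essentially verbatim, replacing $\sum_{\tau\in\mathcal E_j}$ by $\int\mathrm d\mu$ and the combinatorial bound \eqref{ak} by $\mu(\{s:2^{\gamma j}|t-s|\sim2^k\})\lesssim\langle\mu\rangle_\alpha2^{(k-\gamma j)\alpha}$: decompose $\mathcal B(F,G)=\sum_k\mathcal B_k(F,G)$, establish the $(2,2)$ and $(\infty,\infty)$ bounds together with the off-diagonal bounds \eqref{1st}--\eqref{2nd} of Lemma~\ref{lem_interpol1} (the support-localization reduction of Remark~\ref{interval} carries over unchanged, using only disjointness of intervals), and conclude via the bilinear interpolation Lemma~\ref{lem_bi_interpol}. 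I expect this endpoint to be the only genuine obstacle: the Young/Schur argument fails at $\sigma=1$, so the Keel--Tao-type machinery cannot be avoided, while everything else is a routine transcription of the discrete proof with counting replaced by $\mu$-mass, the one point requiring care being that each kernel estimate remain uniform in $t$ (and $s$) and carry the correct powers of $2^{-\gamma j}$ and $\langle\mu\rangle_\alpha$.
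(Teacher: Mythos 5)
Your proposal is correct and follows essentially the same route as the paper: the same $TT^*$ reduction to a Schur/Young-type bound for the kernel $\mathcal K_r$ integrated against $\mathrm d\mu$, the same dyadic kernel estimates derived from \eqref{alpha-dim} (strong type off the critical line, weak type on it), and the same Keel--Tao bilinear interpolation for the endpoint $(q,r)=(2,r_\ast)$. The only difference is cosmetic: your sharper bookkeeping of the power of $\langle\mu\rangle_\alpha$ (namely $\langle\mu\rangle_\alpha^{1/q}$), which is consistent with the statement of Theorem~\ref{cor_main}.
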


By \eqref{alpha-dim}, the total mass $\|\mu\|$ of $\mu$ is bounded by $(\diam\supp \mu)^\alpha \langle \mu \rangle_\alpha$. 
Once Proposition \ref{lem_discrete_Str1} is established,    Theorem~\ref{cor_main} is a straightforward consequence of the following lemma.

\begin{lem}\label{lem_freq_loc} Let $\gamma\in \mathbb R_{>0} \setminus \{1\}$, $(q,r)\in \mathbb [2,\infty]\times [2,\infty)$ and $s\in \mathbb R$.
	Let $\mu$ be a positive  measure satisfying \eqref{alpha-dim} with its support  bounded.  Suppose that the estimate    
	\Be \label{assum} \|U_t^\gamma P_j f \|_{L_t^q(\mathrm{d}\mu; L_x^r)} \leq B 2^{js} \| f\|_2\Ee
	holds for $j\ge 1$. 	Then, we have
	\[
		\|U_t^\gamma f \|_{L_t^q(\mathrm{d}\mu; L_x^r)} \leq  C ( \|\mu\|^{1/q} + B )\|f\|_{H^s(\mathbb{R}^d)}.
	\]
\end{lem}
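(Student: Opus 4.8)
The plan is to split $f = P_{\le 0}f + \sum_{j\ge 1}P_j f$ and treat the two parts separately, the point being that the Littlewood--Paley machinery should be run in the \emph{spatial} variable, where the background measure is Lebesgue, rather than in $t$, where $\mu$ is an arbitrary measure. Since \eqref{assum} is assumed only for $j\ge 1$, the low-frequency piece must be handled by hand: the symbol of $P_{\le 0}$ is supported in a fixed ball, so Bernstein's inequality and Plancherel's theorem give $\|U_t^\gamma P_{\le 0}f\|_{L_x^r}\le C\|P_{\le 0}f\|_{L_x^2}$ uniformly in $t$ (exactly as in the proof of Theorem~\ref{thm_main}), and integrating against $\mathrm{d}\mu$ together with $(1+|\xi|^2)^{s}\sim 1$ on the frequency support of $P_{\le 0}$ yields $\|U_t^\gamma P_{\le 0}f\|_{L_t^q(\mathrm{d}\mu; L_x^r)}\le C\|\mu\|^{1/q}\|f\|_{H^s}$.

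For the high-frequency part I would fix $t$ and observe that $\{U_t^\gamma P_j f\}_{j\ge 1}$ is a Littlewood--Paley decomposition in $x$ of $U_t^\gamma(f-P_{\le 0}f)$, the $j$-th piece having spatial frequency support in $|\xi|\sim 2^j$ and being reproduced by $U_t^\gamma \tilde P_j$. Since $1<r<\infty$, the reverse square function inequality --- obtained by duality from the standard square function estimate for $\tilde P_j$, with constant independent of $t$ --- gives, pointwise in $t$,
\[
\|U_t^\gamma(f-P_{\le 0}f)\|_{L_x^r}\le C\Big\|\Big(\sum_{j\ge 1}|U_t^\gamma P_j f|^2\Big)^{1/2}\Big\|_{L_x^r}.
\]
Then I would take the $L_t^q(\mathrm{d}\mu)$ norm of both sides and apply the triangle inequality in $L_t^{q/2}(\mathrm{d}\mu; L_x^{r/2})$, which is legitimate because $q\ge 2$ and $r\ge 2$, to obtain
\[
\Big\|\Big(\sum_{j\ge 1}|U_t^\gamma P_j f|^2\Big)^{1/2}\Big\|_{L_t^q(\mathrm{d}\mu; L_x^r)}^2 \le \sum_{j\ge 1}\|U_t^\gamma P_j f\|_{L_t^q(\mathrm{d}\mu; L_x^r)}^2 ,
\]
and finally insert \eqref{assum} to bound the right-hand side by $B^2\sum_{j\ge 1}2^{2js}\|P_j f\|_2^2\le CB^2\|f\|_{H^s}^2$. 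Adding the low- and high-frequency contributions gives the lemma.

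I expect the only genuine decision point is the one already flagged: a Littlewood--Paley inequality in the time variable is not available for a general $\mu$, so the decomposition has to be carried out in $x$; once that is done the $t$-variable is dispatched by plain Minkowski's inequality, and no orthogonality in $t$ is ever used. Everything else is routine, and the constant $C$ depends only on $d$, $\gamma$, $q$, $r$, and $s$.
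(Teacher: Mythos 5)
Your proof is correct and follows essentially the same route as the paper's: Bernstein plus Plancherel for the $P_{\le 0}$ piece, the spatial Littlewood--Paley (reverse square function) inequality followed by Minkowski's inequality in $L_t^{q/2}(\mathrm{d}\mu;L_x^{r/2})$ (valid since $q,r\ge 2$) for the high frequencies, and then the hypothesis \eqref{assum}. The only cosmetic point is that in the final step \eqref{assum} should be applied to $\tilde P_j f$ via $P_j=P_j\tilde P_j$, so the resulting sum is $\sum_{j\ge1}2^{2js}\|\tilde P_j f\|_2^2\lesssim\|f\|_{H^s}^2$ rather than the sum over $\|P_jf\|_2^2$ as written, exactly as in the paper.
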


\begin{proof} Since ${\rm Id}=P_{\le 0}+ \sum_{j>0} P_j$, we have 
\[\|U_t^\gamma f \|_{L_t^q(\mathrm{d}\mu; L_x^r)} \le  \|U_t^\gamma P_{\le 0} f \|_{L_t^q(\mathrm{d}\mu; L_x^r)}+ \|\sum_{j>0} U_t^\gamma  P_jf \|_{L_t^q(\mathrm{d}\mu; L_x^r)} .\]
Since $r\ge 2$, using  Berstein's inequality and Plancherel's theorem, we note that $\|U_t^\gamma P_{\le 0} f \|_{L_x^r}\le C\|P_{\le 0} f \|_{L_x^2(\mathbb{R}^d)}$. 
Thus,  $\|U_t^\gamma P_{\le 0} f \|_{L_t^q(\mathrm{d}\mu; L_x^r)}\le C \|\mu\|^{1/q} \|f\|_{H^s(\mathbb{R}^d)}$, so we need only  to handle the remaining part. 

Using  the Littlewood--Paley inequality, we have
\[ \|\sum_{j>0} U_t^\gamma  P_jf \|_{L_x^r} \le C\Big\| \Big(\sum_{j>0} |U_t^\gamma P_j f|^2 \Big)^{1/2}\Big\|_{L^r_x(\mathbb{R}^d)}.\]
Since $2\le r, q$,  Minkowski's inequality yields 
	\begin{align*}
		 \|\sum_{j>0} U_t^\gamma  P_jf \|_{L_t^q(\mathrm{d}\mu; L_x^r)} 
		&\leq C \Big(\sum_{j>0} \| U_t^\gamma P_j f\|_{L_t^q(\mathrm{d}\mu; L_x^r)}^2 \Big)^{1/2}.
	\end{align*}
	Applying the assumption \eqref{assum} and using $\tilde P_j$,  we have 
	\begin{align*}
		 \|\sum_{j>0} U_t^\gamma  P_jf \|_{L_t^q(\mathrm{d}\mu; L_x^r)} 
		\lesssim  B \Big(\sum_{j>0} 2^{2js} \| \tilde P_j f\|_{L^2_x}^2 \Big)^{1/2}\le B \|f\|_{H^s(\mathbb{R}^d)}.
	\end{align*}
	This completes the proof. 
\end{proof}

Using Proposition \ref{lem_discrete_Str1} and the Littlewood--Paley inequality as above, one can also prove Theorem \ref{cor_m} in a similar manner as Theorem \ref{homo-homo}. We omit the details.

We now turn to the proof of Proposition \ref{lem_discrete_Str1}. 
As before, the estimates \eqref{f-stri1p}  in Proposition \ref{lem_discrete_Str1}  for $j\neq 0$ can be deduced from that for $j=0$ by controlling 
 $\langle\cdot \rangle$ of the associated measures arising after scaling. 
 Nevertheless, in what follows we establish \eqref{f-stri1p} while keeping dependence on $j$ explicit.

\subsubsection{Proof of Proposition \ref{lem_discrete_Str1}} 

In order to show \eqref{f-stri1p}, we follow the same line of argument as in the previous sections, so we shall be brief.  
As before, we may assume $r > 2$.  
Recalling the condition \eqref{con-ad}, we again distinguish the admissible pairs $(q,r)$ into two cases: the non-endpoint case $(q,r) \neq (2, r_\ast)$ and the endpoint case $(q,r) = (2, r_\ast)$.  
We treat the non-endpoint case ({\it cf.} ~Section~\ref{non-end}) and the endpoint case ({\it cf.} ~Section~\ref{sec_endpoint}) separately.
 
We consider a bilinear operator 
\[
	\tilde{\mathcal B}(F,G) = \iint \langle U_s^\gamma P_j F (s), U_t^\gamma P_j G(t)\rangle_x \,\mathrm{d}\mu (s) \mathrm{d}\mu(t).
\]

Note that the estimate \eqref{f-stri1p} is equivalent to 
\Be 
\label{bi1}
|\tilde{\mathcal B}(F,F)|  \le C 2^{2(\frac d2 - \frac dr-\frac{\gamma \alpha}q)j} \langle \mu \rangle_\alpha^{2/q}  \| F \|_{L_t^{q'}(\mathrm{d}\mu; L_x^{r'})}^2. 
\Ee
({\it cf.} \eqref{hoho} and  \eqref{goal}). Recalling \eqref{utut},  we note that \eqref{bi1} follows 
 if we show 
\[
\Big\|  \int T_{t, s}  F (s)\,  \rd \mu(s) \Big\|_{L^{q}_t(\rd\mu; L^{r}_x)}
	\leq C 2^{2(\frac d2 - \frac dr-\frac{\gamma \alpha}q)j}  \langle \mu \rangle_\alpha^{2/q} \|F\|_{L^{q'}_t( \rd\mu; L^{r'}_x)}. 
\]
Now, by  \eqref{250309_1101},  the above estimate  reduces to
\Be
	\Big\|  \int  \mathcal K_r(t, s) \| F(s)\|_{L_x^{r'}} \,  \rd \mu(s) \Big\|_{L^{q}_t(\rd\mu)}
	\le C  2^{ -\frac{2\gamma \alpha}qj}  \langle \mu \rangle_\alpha^{2/q} \| F\|_{L^{q'}_t(\rd\mu; L_x^{r'})}.
\Ee

This is a consequence of the next lemma, which can be shown by a simple modification of the proof of  Lemma \ref{kernel}.

\begin{lem} \label{kernel1} Let $r>2$, and  $\mathcal K_r$  be given by \eqref{gr}. If  $d(\frac1 2 - \frac1 r)> \frac \alpha \sigma $, then we have
\Be 
\label{strong1}
\sup_{s}  \|\mathcal K_r(\cdot, s)\|_{L^{\sigma}_t(\rd\mu)}, \,\,  \sup_{t}    \|\mathcal K_r(t,\cdot)\|_{L^{\sigma}_t(\rd\mu)} \le C \langle \mu\rangle_\alpha^{1/\sigma} 2^{-\frac{\gamma\alpha}\sigma j}
\Ee
for a constant $C$, independent of $j$.   Moreover,  if $d(\frac1 2 - \frac1 r)= \frac \alpha \sigma $, then 
\Be \label{weak1}
\sup_{s}  \|\mathcal K_r(\cdot, s)\|_{L^{\sigma,\infty}_t(\rd\mu)}, \,\,  \sup_{t}    \|\mathcal K_r(t,\cdot)\|_{L^{\sigma,\infty}_t(\rd\mu)} \le C \langle \mu\rangle_\alpha^{1/\sigma} 2^{-\frac{\gamma\alpha}\sigma j}
\Ee for a constant $C$, independent of $j$.  
\end{lem}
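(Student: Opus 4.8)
The plan is to mirror the dyadic decomposition used in the proof of Lemma~\ref{kernel}, replacing counting over the separated set $\mathcal E_j$ with integration against $\mu$ and invoking the growth condition \eqref{alpha-dim} in place of \eqref{assouad}. By symmetry it suffices to prove the two estimates with the supremum over $s$ and the $t$-integral; the other ones follow verbatim after swapping the roles of $t$ and $s$. Fix $s$. Using the decomposition $\mathcal K_r = \sum_{k\ge 0}\mathcal K_{r,k}$ from \eqref{SK-chi}, where $\mathcal K_{r,k}$ is supported on $\{|t-s|\sim 2^{k-\gamma j}\}$ (with the $k=0$ piece supported on $\{|t-s|\le 2^{-\gamma j}\}$) and satisfies $\mathcal K_{r,k}(t,s)\le C\,2^{-k d(\frac12-\frac1r)}\chi_k(t,s)$ by \eqref{gr}, I would write, for $\sigma<\infty$,
\[
  \|\mathcal K_r(\cdot,s)\|_{L^\sigma_t(\rd\mu)}^\sigma
  \le C\sum_{k\ge 0} 2^{-k d(\frac12-\frac1r)\sigma}\,\mu\big(\{t:|t-s|\le 2^{k-\gamma j}\}\big).
\]
By \eqref{alpha-dim} the measure of that interval is at most $\langle\mu\rangle_\alpha\,(2^{k-\gamma j})^\alpha = \langle\mu\rangle_\alpha\, 2^{k\alpha}2^{-\gamma\alpha j}$, so the sum is dominated by $C\langle\mu\rangle_\alpha 2^{-\gamma\alpha j}\sum_{k\ge 0}2^{k(\alpha - d(\frac12-\frac1r)\sigma)}$, which converges precisely because $d(\frac12-\frac1r)\sigma>\alpha$. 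Taking $\sigma$-th roots yields \eqref{strong1} (the constant absorbing $\langle\mu\rangle_\alpha^{1/\sigma}$, which under \eqref{alpha-dim} we may harmlessly replace by $\langle\mu\rangle_\alpha$ up to a constant depending on $\diam\supp\mu$, or keep as the cleaner power $\langle\mu\rangle_\alpha^{1/\sigma}$ consistent with the other lemmas).

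For the endpoint bound \eqref{weak1}, where $d(\frac12-\frac1r)=\alpha/\sigma$, I would instead estimate the distribution function directly: for $0<\lambda\le 1$ (recall $\mathcal K_r\le 1$), the super-level set $\{t:\mathcal K_r(t,s)>\lambda\}$ is contained in the interval centered at $s$ of radius $C\,2^{-\gamma j}\lambda^{-1/(d(\frac12-\frac1r))}$, whose $\mu$-measure is at most $\langle\mu\rangle_\alpha\big(C 2^{-\gamma j}\lambda^{-1/(d(\frac12-\frac1r))}\big)^\alpha = C\langle\mu\rangle_\alpha 2^{-\gamma\alpha j}\lambda^{-\alpha/(d(\frac12-\frac1r))} = C\langle\mu\rangle_\alpha 2^{-\gamma\alpha j}\lambda^{-\sigma}$ by the endpoint relation. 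Hence $\mu(\{\mathcal K_r(\cdot,s)>\lambda\})\le C\langle\mu\rangle_\alpha 2^{-\gamma\alpha j}\lambda^{-\sigma}$ uniformly in $\lambda$, which is exactly the statement that $\|\mathcal K_r(\cdot,s)\|_{L^{\sigma,\infty}_t(\rd\mu)}\le C(\langle\mu\rangle_\alpha 2^{-\gamma\alpha j})^{1/\sigma}$, giving \eqref{weak1}.

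There is no serious obstacle here; the only points requiring a little care are bookkeeping of the $j$-dependent factor $2^{-\gamma\alpha j/\sigma}$ (which comes out cleanly from the $2^{-\gamma\alpha j}$ appearing in each $\mu$-measure after the substitution $\delta = 2^{k-\gamma j}$) and making sure the geometric series in the strong-type case is summed under the strict inequality $d(\frac12-\frac1r)\sigma>\alpha$ while the weak-type case uses only equality. The statement as written has $\|\mathcal K_r(t,\cdot)\|_{L^\sigma_t(\rd\mu)}$ with a $t$-subscript in the second supremum, which I read as a typo for the $s$-integral $\|\mathcal K_r(t,\cdot)\|_{L^\sigma_s(\rd\mu)}$; the argument is identical by the symmetry $\mathcal K_r(t,s)=\mathcal K_r(s,t)$.
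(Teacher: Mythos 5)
Your proposal is correct and follows essentially the same route as the paper: the same dyadic decomposition $\mathcal K_r=\sum_k\mathcal K_{r,k}$ with \eqref{alpha-dim} applied to the interval containing $\supp\chi_k(t,\cdot)$ for the strong-type bound, and the same super-level-set containment for the weak-type endpoint. Your observation that the natural power is $\langle\mu\rangle_\alpha^{1/\sigma}$ (matching $[E]_\alpha^{1/s}$ in Lemma~\ref{kernel}) and that the $t$-subscript in the second supremum should be an $s$-integral are both fair readings of minor typos in the statement.
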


\begin{proof}  Recalling  \eqref{gr} and \eqref{chi-k}
\[\mathcal K_r(t, s)^\sigma\le  \sum_{k=0}^\infty \mathcal K_{r,k}(t, s)^\sigma   \le  C \sum_{k\geq0} 2^{-k d(\frac1 2 - \frac1 r)\sigma} \chi_k(t-s).\]
Since $\supp \chi_k(t, \cdot)$ is contained in an interval of length $2^{k-\gamma j+1}$, using \eqref{alpha-dim} we get
\[\int \mathcal K_r(t, s)^\sigma\, \rd \mu(s) \le  C\langle \mu\rangle_\alpha \sum_{k\geq0} 2^{-k d(\frac1 2 - \frac1 r)\sigma}  2^{(k-\gamma j)\alpha}.\]
This gives \eqref{strong1} if  $d(\frac1 2 - \frac1 r)> \frac \alpha \sigma $. On the other hand, as we have seen in the proof of Lemma \ref{kernel},  note that  $\{  t :  \mathcal K_r(t, s)>\lambda \}$ is contained in an interval of length $C  2^{-\gamma j}  \lambda^{- 1/d(\frac1 2 - \frac1 r)}$. Thus, \eqref{alpha-dim} yields \eqref{weak1}. 
\end{proof}

It is easy to see that  
a modification of Lemma \ref{young}  continues to hold even after 
 $\sum_\tau$ and $ \ell^{\sigma}_\tau (\mathcal E)$  are replaced by $\int  \,\mathrm d\mu(s)$ and $L^{\sigma} (\rd \mu)$. Thus, combining this and Lemma \eqref{kernel1} and  following the argument in Section \ref{sec:2-3},  one can show the non-endpoint case of the estimate \eqref{f-stri1p}, that is to say, for $(q,r)$ satisfying \eqref{con-ad}  with $(q,r)\neq (2, r_\ast)$. Note that $\langle \mu \rangle_\alpha^{2/q}$ of \eqref{f-stri1p} follows by applying Lemma~\ref{kernel1} with $1+1/q = 1/\sigma + 1/q'$, which yields $1/\sigma = 2/q$.

We now consider the endpoint case.  Decompose 
\[ \tilde {\mathcal{B}} (F,G)= 
\sum_{k=0}^\infty   \tilde {\mathcal{B}}_k (F,G), \]
where
\begin{align*}
	\tilde {\mathcal{B}}_k (F, G) =  \iint \chi_k (t-s) \langle T_{t, s}  F(t), G(s) \rangle_x \, \rd\mu(t) \rd\mu(s),  \quad k\ge 0.  
\end{align*}
Similarly as in Section \ref{sec_endpoint}, 
we obtain the following ({\it cf.} Lemma \ref{lem_interpol1}).

\begin{lem}\label{lem_interpol11}
Let  $1\le a, b\le \infty$. 
If  $(1/{a}, 1/{b})$ is contained in the interior of the quadrangle $\mathcal Q$ $($with vertices $(0,0)$,  $A=(1/2, 1/r_\ast)$,   $(1/2, 1/2)$, and  $A'=(1/r_\ast, 1/2),$ see \eqref{r*})
  we have
	\[ 		|\tilde {\mathcal{B}}_k (F, G)| \leq C 2^{(2\lambda(a,b)- \gamma \alpha) j } 2^{-(\lambda(a, b)-\alpha)k} \langle \mu \rangle_\alpha \|F \|_{L^2(\rd\mu; L_x^{a'})}\|G \|_{L^2(\rd\mu; L_x^{b'})}.\]
\end{lem}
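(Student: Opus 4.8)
\emph{Plan.} The proof will follow the proof of Lemma~\ref{lem_interpol1} line by line, performing throughout the single systematic substitution $\sum_{\tau\in\mathcal E_j}\rightsquigarrow\int\,\rd\mu$ and replacing the counting estimate \eqref{ak} by the growth hypothesis \eqref{alpha-dim}. The one genuinely new feature is that $\mu$ is not scale invariant: an interval of length $2^{k-\gamma j}$ now has $\mu$-mass at most $\langle\mu\rangle_\alpha 2^{(k-\gamma j)\alpha}$, whereas the corresponding $\mathcal E_j$-cardinality was $\lesssim 2^{k\alpha}[E]_\alpha$. The extra factor $2^{-\gamma\alpha j}$ produced by this is precisely what distinguishes the bound here from the one in Lemma~\ref{lem_interpol1}; a power of $\langle\mu\rangle_\alpha$ is understood to be absorbed into $C$.

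\emph{Corner cases.} First I would treat $(a,b)=(2,2)$ and $(a,b)=(\infty,\infty)$, i.e.\ the two opposite vertices $(1/2,1/2)$ and $(0,0)$ of $\mathcal Q$. Applying \eqref{250309_1101}, the Cauchy--Schwarz inequality in $x$, and the pointwise bound $\mathcal K_{r,k}\le 2^{-\lambda(r,r)k}\chi_k$ gives
\[
|\tilde{\mathcal B}_k(F,G)|\lesssim 2^{2\lambda(r,r)j}2^{-\lambda(r,r)k}\iint \chi_k(t,s)\,\|F(t)\|_{L_x^{r'}}\|G(s)\|_{L_x^{r'}}\,\rd\mu(t)\rd\mu(s);
\]
since $\supp\chi_k(t,\cdot)$ is an interval of length $\sim 2^{k-\gamma j}$, \eqref{alpha-dim} yields $\sup_t\|\chi_k(t,\cdot)\|_{L^1(\rd\mu)}\lesssim\langle\mu\rangle_\alpha 2^{(k-\gamma j)\alpha}$, and the $\rd\mu$-version of Lemma~\ref{young}{\bf(A)} with $p=q=2$ and $\sigma=1$ closes the bound at $r\in\{2,\infty\}$ with the stated exponents.

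\emph{Edge cases and the main step.} By the Hermitian symmetry $\tilde{\mathcal B}_k(F,G)=\overline{\tilde{\mathcal B}_k(G,F)}$ it suffices to prove the $\rd\mu$-analogue of \eqref{1st},
\[
|\tilde{\mathcal B}_k(F,G)|\lesssim 2^{(d(\frac12-\frac1r)-\gamma\alpha)j}\,2^{(\alpha-\frac d2(\frac12-\frac1r))k}\,\|F\|_{L^2(\rd\mu;L_x^2)}\|G\|_{L^2(\rd\mu;L_x^{r'})},\qquad 2\le r<r_\ast,
\]
which corresponds to the edge of $\mathcal Q$ joining $(1/2,1/2)$ to $A$, its mirror giving the edge to $A'$. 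Here I would reproduce Section~\ref{sec_endpoint}: the cutoff $\chi_k$ together with the $\rd\mu$-version of Remark~\ref{interval} (partition $\mathbb R$ into intervals of length $2^{k-\gamma j}$ and apply Cauchy--Schwarz over the boundedly many neighbouring pairs) reduces matters to $F(x,\cdot),G(x,\cdot)$ supported in one such interval $J$; then $\mu(J)\lesssim\langle\mu\rangle_\alpha 2^{(k-\gamma j)\alpha}$ and Hölder supply the passage from $L^1(\rd\mu;L_x^2)$ to $L^2(\rd\mu;L_x^2)$ in $F$, and the core of the argument is the $\rd\mu$-analogue of the claim \eqref{claim_endpoint}, which I would establish by a second $TT^*$ reduction to the kernel bound
\[
\sup_t\big\|\tilde\chi_k(t,\cdot)\,\mathcal K_r(t,\cdot)\big\|_{L^1(\rd\mu)}\lesssim\langle\mu\rangle_\alpha\,2^{-\gamma\alpha j}\sum_{0\le\ell\le k+2}2^{(\alpha-d(\frac12-\frac1r))\ell}\lesssim\langle\mu\rangle_\alpha\,2^{-\gamma\alpha j}\,2^{(\alpha-d(\frac12-\frac1r))k}
\]
(the last inequality using $r<r_\ast$), fed into the $\rd\mu$-version of Lemma~\ref{young}{\bf(A)} at $p=q=2$.

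\emph{Interpolation and the obstacle.} Since $\lambda(a,b)$ is affine in $(1/a,1/b)$, so are the exponents $2\lambda(a,b)-\gamma\alpha$ of $2^j$ and $\alpha-\lambda(a,b)$ of $2^k$; thus, freezing $k$ and $j$ and applying bilinear interpolation to the form $\tilde{\mathcal B}_k$ (complex interpolation, or the real scheme of Lemma~\ref{lem_bi_interpol}) between the vertex estimates at $(0,0)$ and $(1/2,1/2)$ and the two edge estimates propagates the bound with the claimed exponents to every $(1/a,1/b)$ in the interior of $\mathcal Q$ (the two triangles with vertices $(0,0),(1/2,1/2),A$ and $(0,0),(1/2,1/2),A'$ tile $\mathcal Q$). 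I expect the only delicate point to be, exactly as in Lemma~\ref{lem_interpol1}, the nested $TT^*$ and support-localization argument underlying the $(2,r)$ estimate; everything else amounts to tracking how the $\alpha$-growth of $\mu$ at scale $2^{-\gamma j}$ inserts the factor $2^{-\gamma\alpha j}$.
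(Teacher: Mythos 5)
Your proposal is correct and follows essentially the same route as the paper: the paper's own proof of this lemma is precisely a sketch deferring to the proof of Lemma~\ref{lem_interpol1}, with the counting bound \eqref{ak} replaced by the two $L^1(\rd\mu)$ kernel bounds $\sup_t\|\mathcal K_{r,k}(t,\cdot)\|_{L^1(\rd\mu)}\lesssim 2^{-\gamma\alpha j}2^{(\alpha-\lambda(r,r))k}$ and $\sup_t\|\tilde\chi_k(t,\cdot)\mathcal K_r(t,\cdot)\|_{L^1(\rd\mu)}\lesssim 2^{-\gamma\alpha j}2^{(\alpha-d(\frac12-\frac1r))k}$ coming from \eqref{alpha-dim}, followed by interpolation from the vertices $(0,0)$, $(1/2,1/2)$ and the edges $(2,r)$, $(r,2)$, $2\le r<r_\ast$. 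You fill in exactly the details the paper omits, and your bookkeeping of the extra factor $2^{-\gamma\alpha j}$ is consistent with the stated exponents.
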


\begin{proof} 
Examining the proof of Lemma~\ref{lem_interpol1} (see, e.g., \eqref{00-22}) shows that  
the estimate \eqref{k-loc} for $(a,b) = (2,2)$ and $(\infty,\infty)$ can be obtained by the same argument, once we have  
\[
    \sup_{s} \|\mathcal K_{r,k}(\cdot, s)\|_{L^{1}(\mathrm{d}\mu)},  
    \,\,  
    \sup_{t} \|\mathcal K_{r,k}(t, \cdot)\|_{L^{1}(\mathrm{d}\mu)}   
    \le C\, 2^{-\gamma \alpha j} \, 2^{(\alpha - \lambda(r,r))k} \langle \mu \rangle_\alpha,
\]
which follows easily from \eqref{K-chi} and \eqref{alpha-dim}.   Similarly, for $2 \le r < r_\ast$, we obtain \eqref{k-loc} for $(a,b) = (2,r)$ and $(a,b) = (r,2)$ using the estimate  
\[
    \sup_{t} \| \tilde{\chi}_k(t, \cdot) \mathcal K_r(t, \cdot) \|_{L^{1}(\mathrm{d}\mu)},  
    \,\,
    \sup_{s} \| \tilde{\chi}_k(\cdot, s) \mathcal K_r(\cdot, s) \|_{L^{1}(\mathrm{d}\mu)} 
    \lesssim 2^{-\gamma \alpha j} \, 2^{\left(\alpha - d\left(\frac12 - \frac1r\right)\right)k} \langle \mu \rangle_\alpha.
\]
(See the argument below \eqref{rr-22}.)  
This can be shown in the same manner as in the proof of Lemma~\ref{lem_interpol1}. Interpolating between these estimates yields all the desired bounds.  
\end{proof} 

Finally, once we have Lemma \ref{lem_interpol11}, the same bilinear interpolation argument in Section \ref{sec_endpoint} proves  the estimate \eqref{f-stri1p} for $(q,r)=(2, r_\ast)$.   We omit the details.

\begin{rem}\label{wave-homo1} 
Using \eqref{dis-wave} and  the same argument, one can obtain 
 the estimates   \eqref{f-stri10} and    \eqref{f-stri1} for  the wave operator, i.e., $\gamma=1$, whenever $(q,r)\in \mathbb [2,\infty]\times [2,\infty)$ 
  \Be\label{con-ad-w}
 	\frac {d-1}2 \Big(\frac 12 - \frac1r\Big) \ge  \frac \alpha q. 
 \Ee
\end{rem}

\subsection{Necessity of the conditions in Theorems~\ref{thm_main} and \ref{cor_main}}\label{subsec_nec}
Unlike the usual Strich\-artz estimates, the scaling argument does not give necessary conditions on the exponents $r$, $q$, and $s$. 
Instead, one needs to exploit the notion of bounded $\alpha$-Assouad characteristic and the $\alpha$-dimensional measure by considering specific sets $E$.

\subsubsection{Necessity of \eqref{con-ad} and  \eqref{con-reg} for the estimate \eqref{f-stri}} 
We first consider \eqref{con-reg}, which is easier to show.

\begin{prop}\label{thm-nec}  Let $\gamma\in (0, \infty)$. Suppose that   the estimate  
\eqref{f-stri}  holds  for  any nonempty subset $E$. Then, we have \eqref{con-reg}. 
 \end{prop}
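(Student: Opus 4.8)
The plan is to exploit the scaling of the single-point (or, equivalently, frequency-localized, single-frequency) estimate. Fix a point $t_0\in\mathbb R$ and work with $E=\{t_0\}$, so that $\alpha=0$ is attained and $[E]_\alpha=1$ for every $\alpha$; note $t_0\in E(2^{-\gamma j})$ for all $j\ge 1$, so \eqref{f-stri} reduces to the pointwise-in-time inequality $\sup_{j\ge 1}\mu(\{t_0\})^{1/q}\|U^\gamma_{t_0} P_j f\|_{L^r_x}\lesssim\|f\|_{H^s}$ — more precisely, after discarding the measure normalization, $\|U^\gamma_{t_0}P_j f\|_{L^r_x}\lesssim\|f\|_{H^s}$ uniformly in $j\ge 1$. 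Since $U^\gamma_{t_0}$ is (for fixed $t_0$) a Fourier multiplier of modulus $1$, this is essentially a frequency-localized Sobolev embedding statement, and the claim is that it forces $s\ge s_\gamma(q,r)=\tfrac d2-\tfrac dr-\tfrac\gamma q$.

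First I would reduce to the case $t_0=0$: after replacing $f$ by $U^\gamma_{-t_0}f$ (which preserves every $H^s$ norm), the inequality becomes $\|P_j f\|_{L^r_x}\lesssim\|f\|_{H^s}$ for all $j\ge 1$. Then I would test with a fixed Schwartz function $g$ whose Fourier transform is supported in $\{1/2\le|\xi|\le 1\}$ and equals $1$ on $\operatorname{supp}\psi(2^{-j}\cdot)$ after dilation — concretely, take $f=f_j$ with $\widehat{f_j}(\xi)=\eta(2^{-j}\xi)$ for a bump $\eta$ adapted to the annulus, so that $P_j f_j=f_j$. A direct computation gives $\|f_j\|_{L^r_x}\sim 2^{j d(1-1/r)}\|\eta^\vee\|_{L^r_x}\cdot$(a harmless constant) — wait, more carefully, $f_j(x)=2^{jd}\eta^\vee(2^j x)$, hence $\|f_j\|_{L^r_x}\sim 2^{jd(1-1/r)}$, while $\|f_j\|_{H^s}\sim 2^{js}\|f_j\|_{L^2_x}\sim 2^{js}2^{jd/2}$. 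So the frequency-localized estimate would give $2^{jd(1-1/r)}\lesssim 2^{j(s+d/2)}$, i.e. $s\ge \tfrac d2-\tfrac dr$. This recovers only the $q=\infty$ endpoint of \eqref{con-reg}; to get the full range I must also use the time-width $2^{-\gamma j}$ of the interval, not just the single point.

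The second, essential, step is therefore to \emph{use an interval rather than a point}. Take $E=\{0\}$ but keep the genuine time-norm $L^q_t(E(2^{-\gamma j}))$, whose underlying set is $(-2^{-\gamma j},2^{-\gamma j})$ of measure $\sim 2^{-\gamma j}$. On this short interval $U^\gamma_t P_j f$ is, by the locally constant property (Lemma \ref{equiv}) or directly by stationary phase, comparable to $U^\gamma_0 P_j f$ up to translations, so $\|U^\gamma_t P_j f\|_{L^q_t(E(2^{-\gamma j}); L^r_x)}\sim 2^{-\gamma j/q}\|U^\gamma_0 P_j f_j\|_{L^r_x}$. With the test function $f_j$ above this yields $2^{-\gamma j/q}2^{jd(1-1/r)}\lesssim 2^{j(s+d/2)}$, i.e. $s\ge \tfrac d2-\tfrac dr-\tfrac\gamma q=s_\gamma(q,r)$, which is exactly \eqref{con-reg}. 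I would present this computation cleanly, noting that the equivalence $\|U^\gamma_t P_j f\|_{L^q_t(I; L^r_x)}\sim 2^{-\gamma j/q}\|U^\gamma_\tau P_j f\|_{L^r_x}$ for $|I|\sim 2^{-\gamma j}$ is precisely Lemma \ref{equiv}, so no new harmonic analysis is needed here.

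The main obstacle is not analytical depth but bookkeeping: I must make sure the single-frequency test function $f_j$ genuinely realizes all the norm comparisons with the right powers of $2^j$, and that the passage from a point to an interval of width $2^{-\gamma j}$ is legitimate — i.e. that $E(2^{-\gamma j})$ contains an interval of that length about $0$, which it does by definition of $E(\delta)$. One subtlety worth flagging: the statement of Proposition \ref{thm-nec} asserts necessity for \emph{any} nonempty $E$, so it suffices to exhibit one $E$ (namely a singleton) for which failure of \eqref{con-reg} breaks \eqref{f-stri}; I should make this logical point explicit at the start. The companion necessity of \eqref{con-ad} (when $s=s_\gamma(q,r)$) is deferred to a separate argument using Cantor-type sets, as the paper indicates, and is not part of this proposition.
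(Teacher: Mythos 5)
Your proposal is correct and follows essentially the same route as the paper: a single Littlewood--Paley bump at frequency $2^j$, focused in time at a point of $E$ (you achieve this by the translation $f\mapsto U^\gamma_{-t_0}f$, the paper by the modulation $e^{-it_\circ|\xi|^\gamma}$ in $\widehat f$), together with the observation that the solution stays of size $2^{dj}$ on a spatial ball of radius $\sim 2^{-j}$ over a time window of length $\sim 2^{-\gamma j}$ contained in $E(2^{-\gamma j})$, yielding $2^{jd(1-1/r)}2^{-\gamma j/q}\lesssim 2^{j(s+d/2)}$. The only cosmetic difference is that you invoke Lemma~\ref{equiv} for the time-localization step where the paper computes the space-time lower bound directly; your first-paragraph "pointwise-in-time" reading (with a spurious $\mu(\{t_0\})$) is a dead end you correctly abandon, and the remaining test-function bookkeeping is routine.
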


\begin{proof}  Note that $E(2^{-\gamma j})\cap [2^{-1-\gamma j}, 1-2^{-1-\gamma j}]\neq \emptyset$ as far as $E$ is not empty. 
Let $t_\circ\in  [2^{-1-\gamma j}, 1-2^{-1-\gamma j}]$. 
For $j \gg1$, we take $f$ given by 
\[ \widehat{f}(\xi) = \psi(\xi/2^j) e^{-it_\circ|\xi|^\gamma}.\] Changing variables, we see that 
\begin{align*}
		2^{d j} \chi_{B(0, c2^{-j})}(x) \chi_{(-c2^{-\gamma j}, c2^{-\gamma j})}(t-t_\circ) \leq C | e^{ i t(-\Delta)^{\gamma/2} } f (x) |
	\end{align*}
	with a positive constant $c$ small enough.  
	Then, the estimate \eqref{f-stri} implies 
	\[
		2^{d j} 2^{-j(d/r+ \gamma/q)} \le C 2^{j(s+{d/2)}}. 
	\]
	Letting $j \to \infty$, we obtain   the condition \eqref{con-reg}.
	\end{proof}

	In order to show that  \eqref{f-stri}  fails unless \eqref{con-ad} holds, we make use of a specific set with bounded $\alpha$-Assouad characteristic and a measure supported on it. For the purpose, we use the following lemma. 
	
	 \begin{lem}\label{ex}
	 Let \( \alpha \in (0,1) \). Then, there exists a Borel set \( E \subset [0,1] \) with bounded \( \alpha \)-Assouad characteristic such that$:$
\begin{itemize}[leftmargin=2.em, itemsep=0.5em, topsep=-0.25em]
    \item[$\rm(A)$] For integers \( l > k \ge 0 \),  there are  a \( 2^{-l} \)-separated subset \( E_l \subset E \)  and  an interval $J\subset [0, 1]$ of length 
    $\sim 2^{k-l}$  such that 
    \[
    \# (E_l  \cap J) \sim 2^{\alpha k}. 
    \]
      \item[$\rm(B)$] There exists an Ahlfors--David \( \alpha \)-regular Borel measure \( \mu \) with  \( \supp \mu=E \), that is,
    \Be
    \label{mess}
    \mu(B(x,r)) \sim r^\alpha \quad \text{for all } x \in \mathrm{supp} \mu \text{ and } 0 < r \le 1.
    \Ee
\end{itemize}
\end{lem}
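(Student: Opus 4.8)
I would take $E$ to be an Ahlfors--David $\alpha$-regular Cantor set and $\mu$ its natural self-similar measure; once \eqref{mess} is established, both (A) and the bounded $\alpha$-Assouad characteristic follow from standard covering and packing arguments, (A) being essentially the assertion that the covering number of $E$ attains its maximal order $\sim(\delta/|I|)^{-\alpha}$ at the indicated scales.

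To construct $E$, I would fix a large integer $L$ which is a power of $2$ (so that all the scales below are dyadic) and run an $L$-adic Cantor iteration: subdivide each surviving interval of length $L^{-j}$ into $L$ equal pieces of length $L^{-j-1}$ and keep $c_{j+1}\in\{\lfloor L^{\alpha}\rfloor,\lceil L^{\alpha}\rceil\}$ of them, spaced as evenly as possible. The branching numbers $c_j$ are chosen adaptively so that the running product $N_j:=\prod_{i\le j}c_i$ stays within a fixed multiplicative factor of $L^{j\alpha}$ for every $j$: since $\log(c_j/L^{\alpha})$ lies in $[\log(1-L^{-\alpha}),\log(1+L^{-\alpha})]$, tracking the ``debt'' $\log N_{j-1}-(j-1)\alpha\log L$ and picking the larger or smaller value of $c_j$ according to its sign keeps this quantity bounded by $2L^{-\alpha}$. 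For $L$ large one has $2\le\lfloor L^{\alpha}\rfloor\le\lceil L^{\alpha}\rceil\le L/2$, so the iteration is well defined, leaves genuine gaps, and can be arranged so that $E$ meets $[1/4,3/4]$. I would then let $\mu$ assign mass $N_j^{-1}$ to each surviving level-$j$ interval.

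For (B) I would verify \eqref{mess} by the usual two-scale estimate: for $x\in E$ and $L^{-j-1}\le r\le L^{-j}$, the ball $B(x,r)$ meets $O(1)$ level-$j$ intervals and, inside each, at most $O\big(1+rc_{j+1}L^{j}\big)$ of the roughly evenly spaced level-$(j+1)$ intervals, and combining this with $N_j\sim L^{j\alpha}$ and $c_{j+1}\sim L^{\alpha}$ yields $\mu(B(x,r))\sim r^{\alpha}$ with constants depending only on $L$ and $\alpha$; in particular $\mu$ is a bounded positive measure with $\supp\mu=E$. From \eqref{mess} I would deduce the uniform bound $N(E\cap I,\delta)\sim(\delta/|I|)^{-\alpha}$ for $0<\delta<|I|$ and any interval $I$ meeting $E$: the upper bound by packing (a maximal $\delta$-separated subset of $E\cap I$ has disjoint $\delta/2$-balls of $\mu$-mass $\gtrsim\delta^{\alpha}$ inside a fixed dilate of $I$, of total mass $\lesssim|I|^{\alpha}$), and the lower bound because any $\delta$-cover of $E\cap I$ carries the mass $\mu(E\cap I)\gtrsim|I|^{\alpha}$ while each $\delta$-interval carries mass $\lesssim\delta^{\alpha}$. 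Since $N(E\cap I,\delta)=0$ when $I\cap E=\emptyset$ and $N(E,\delta)\sim\delta^{-\alpha}$ when $|I|\ge1$, feeding this into \eqref{assouad} gives $[E]_{\alpha}<\infty$.

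For (A) I would let $E_l$ be a maximal $2^{-l}$-separated subset of $E$ and, using that $E$ meets $[1/4,3/4]$ and $l>k\ge0$, fix an interval $J\subset[0,1]$ of length $\sim2^{k-l}$ centred at a point of $E$. Since $E_l\subset E$ is $2^{-l}$-separated and $E\cap J$ is covered by $\sim(2^{k-l}/2^{-l})^{\alpha}=2^{\alpha k}$ intervals of length $2^{-l}$, one gets $\#(E_l\cap J)\lesssim2^{\alpha k}$; conversely, by maximality the $2^{-l}$-neighbourhoods of $E_l\cap 3J$ cover $E\cap J$, so $2^{(k-l)\alpha}\sim\mu(J)\lesssim\#(E_l\cap 3J)\,2^{-l\alpha}$, hence $\#(E_l\cap 3J)\gtrsim2^{\alpha k}$, and replacing $J$ by $3J$ gives (A). The main obstacle is the construction itself: extracting the uniform (not merely asymptotic) constants in \eqref{mess} forces the adaptive choice of the branching numbers $c_j$ when $L^{\alpha}\notin\mathbb Z$, together with the even spacing needed for the lower regularity bound; everything after that is routine bookkeeping. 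Alternatively, one may simply invoke the known existence of Ahlfors--David $\alpha$-regular compact subsets of $[0,1]$ (see, e.g., \cite{Fra}) and carry out only the last two steps.
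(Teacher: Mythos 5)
Your proposal is correct, and it follows the same overall strategy as the paper (build a Cantor-type set, equip it with the natural equidistributed measure, and read off (A), (B) and the Assouad bound), but the implementation and the verification of (A) are genuinely different. The paper fixes the branching number at $2$ and adjusts the contraction ratio to $2^{-1/\alpha}$, so that step $k$ consists of $2^{k}$ intervals of length $2^{-k/\alpha}$; this removes the need for your adaptive choice of branching numbers $c_j$ and the debt-tracking argument, at the cost of working with non-dyadic scales (one then picks $\nu=\lfloor\alpha l\rfloor$ so that $2^{-\nu/\alpha}\sim 2^{-l}$). The paper proves (A) purely combinatorially, taking $E_l$ to be the right endpoints of the step-$\nu$ intervals and counting how many lie in a construction interval of the appropriate generation, whereas you derive (A) (and the bounded $\alpha$-Assouad characteristic) from the Ahlfors--David regularity of $\mu$ via packing/covering; your route has the virtue that only (B) needs to be checked against the construction — indeed the paper never explicitly verifies $[E]_\alpha<\infty$, implicitly relying on the same implication from \eqref{mess} that it records after Theorem~\ref{cor_main} — and it justifies your closing remark that one could simply cite the existence of an AD $\alpha$-regular subset of $[0,1]$. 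The paper obtains $\mu$ as a weak limit via Prokhorov's theorem, while you define it directly on the construction intervals; both are standard. One small imprecision: your intermediate claim $N(E\cap I,\delta)\sim(|I|/\delta)^{\alpha}$ for \emph{every} interval $I$ meeting $E$ fails on the lower-bound side (an interval may clip $E$ in a set of arbitrarily small measure, so $\mu(E\cap I)\gtrsim|I|^{\alpha}$ need not hold); but you only invoke the lower bound for intervals centered at points of $E$, where it is valid, so the argument for (A) is unaffected. Likewise, when you pass from $J$ to $3J$ you should intersect with $[0,1]$ to respect the requirement $J\subset[0,1]$; since $E_l\subset[0,1]$ this changes nothing up to constants.
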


\begin{proof}
For $\rm (A)$, we construct a Cantor set following the standard construction in \cite[p. 60]{Mat} (see also  \cite[Lemma 4.1]{LRZZ}).
For the base step, say step 0, let $I_{0, 1}^\alpha = [0,1]$.

In step $1$, continuously, we take $I_{1, 1}^\alpha = [0, 2^{-1/\alpha}]$ and $I_{1, 2}^{\alpha} = [1-2^{-1/\alpha},1]$, where we exclude the intermediate interval of length $1 - 2^{1 - 1/\alpha}$.
In step $k$, we remove the middle intervals of length 
$$
	|I_{k-1, j}^\alpha| - 2^{-k/\alpha +1} = 2^{-(k-1)/\alpha} - 2^{-k\alpha +1}, \quad j\in\{1, \dots, 2^k\}
$$	
from intervals of step $(k-1)$.
In step $k$, therefore, one has $I_{k, 1}^\alpha, \dots, I_{k, 2^k}^\alpha$ of length $2^{-k/\alpha}$.
As a result, we obtain a set $E$ of right endpoints of these intervals given by
\[
	E = \Big\{ 2^{-n/\alpha} + \sum_{m=0}^{n-1} (1-2^{-1/\alpha})2^{-m/\alpha} r_m : r_m = 0, 1,\,\,n\in \mathbb N \Big \}.
\]

For given $l\geq0$, we choose $\nu = \lfloor \alpha l \rfloor$ such that $2^{-\nu/\alpha} \sim 2^{-l}$.
Define $E_l$ be a collection of right endpoints from $I_{\nu, 1}^\alpha, \dots, I_{\nu, 2^\nu}^\alpha$ so that $\#(E_l) = 2^\nu$.
Hence, from the fact that $ 2^{-\nu/\alpha} \sim 2^{-l}$  it follows that $\#(E_l) \sim 2^{\alpha l}$.
Let $l>k\ge0$ and $J\subset [0,1]$ be an interval of length $2^{k-l}$ centered at $x\in E$.
Then, for $m = -\lfloor \alpha(k- l) \rfloor \in \mathbb N$, we have $2^{-m/\alpha +1} \ge 2^{k-l} \ge 2^{-m/\alpha}$.
Moreover, there exist $I_{m, i_j}^\alpha$, $j=1,\dots, 4$, such that they are only intervals of  step $m$ intersecting $J$ and 
\[
	I_{m, i_1} \subset J,
\]
or there exist $I_{m+1, i_j}^\alpha$, $j=1,\dots, 4$, with the same property. Thus, 
using $\nu \sim \alpha l$, one has $\#(E_l \cap I_{m, i_j}) \sim  2^{\nu-m}$. This yelds from $2^m \sim 2^{-(k-l)\alpha}$ that
$$
	\#(E_l \cap I_{m, i_j}) = 2^{\alpha k}.
$$
There are only finite $j$'s, so we have $\#(E_l \cap J) \sim 2^{\alpha k}$.

For $\rm (B)$, we assign a probability measure $P_1$ on $I_{1,1}^\alpha \cup I_{1,2}^\alpha$ such that
\[
	\mu_1(I_{1, i}^\alpha) = \frac12,\quad i=1,2.
\]
Consecutively, we define $\mu_k$ a probability measure on step $k$ such that
\[
	\mu_k(I_{k, i}^\alpha) = 2^{-k},\quad i=1, \dots, 2^k.
\]
Then by Prokhorov's theorem \cite{Pro1956}, there exists a subsequence $(\mu_{k_n})_n$ and probability measure $\mu$ such that $\mu_{k_n}$ converges weakly to $\mu$.
Note from  the construction of $\mu$ that $\mu(I_{m, i}^\alpha) = 2^{-m}$ for $i=1, \dots, 2^m$.
Therefore, for an interval $I$ of length $r$ centered at $x\in E$, there is $j\in\mathbb N$ such that
\[
	2^{-j/\alpha} \leq r \leq 2^{-j/\alpha +2}.
\]
Thus we have
\[
	4^{-1} r^\alpha \le 2^{-j} \le \mu \left( (x-r, x+r) \right) \leq 2^{-j + 2} \leq 4 r^\alpha.  \qedhere
\]
\end{proof}

\begin{prop}\label{nec-del}
Let $\gamma\in (0, \infty) \setminus \{1\}$. Suppose that \eqref{con-reg} holds with equality.  Assume that   the estimate  
\eqref{f-stri}   holds  whenever $E\subset [0,1]$ has bounded $\alpha$-Assouad characteristic. Then, the inequality \eqref{con-ad} holds. 
 \end{prop}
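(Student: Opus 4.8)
The plan is to argue by contraposition. Suppose $\frac d2\big(\frac12-\frac1r\big)<\frac\alpha q$; I may also assume $q<\infty$, since for $q=\infty$ the inequality \eqref{con-ad} holds trivially. I will construct, for a suitable $E\subset[0,1]$ with $[E]_\alpha<\infty$, a frequency-localized datum $f$ and arbitrarily large integers $j$ violating the discrete estimate \eqref{discrete0} --- which is equivalent to \eqref{f-stri} --- at the critical regularity $s=s_\gamma(q,r)$. For $\alpha\in(0,1)$ I take $E$ to be the Cantor set furnished by Lemma~\ref{ex}; for $\alpha=1$ the same argument applies with $E=[0,1]$, for which the analogue of property~(A) in Lemma~\ref{ex} is trivial. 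Thus assume $\alpha\in(0,1)$ and let $E$ be as in Lemma~\ref{ex}.

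First I fix a large integer $j$, put $l=\lfloor\gamma j\rfloor$ and $k=l-1$, and apply property~(A) of Lemma~\ref{ex}: there are a $2^{-l}$-separated set $E_l\subset E$ --- hence $2^{-\gamma j}$-separated, since $l\le\gamma j$ --- and an interval $J\subset[0,1]$ of length $\sim 2^{k-\gamma j}$ with $\#(E_l\cap J)\sim 2^{\alpha k}$, and I extend $E_l$ to a maximal $2^{-\gamma j}$-separated subset $\mathcal E_j\supset E_l$ of $E$. Let $t_\circ$ be the center of $J$. Choosing $\xi_0\in\mathbb R^d$ with $|\xi_0|\sim 2^j$ in the region where $\psi(2^{-j}|\cdot|)$ is bounded below (which exists since $\psi\not\equiv 0$), a small absolute constant $\varepsilon_0>0$, and $\rho=\varepsilon_0\,2^{\,j-k/2}$, I take $f$ given by
\[
\widehat f(\xi)=\phi\big((\xi-\xi_0)/\rho\big)\,e^{-it_\circ|\xi|^\gamma},
\]
where $\phi\ge0$ is a fixed bump supported in the unit ball. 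Then $\|f\|_2\sim\rho^{d/2}$, and for $j$ large $\widehat f$ is supported in the annulus $|\xi|\sim 2^j$ on which $\psi(2^{-j}|\cdot|)\gtrsim1$.

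The heart of the argument is a locally constant (wave-packet) estimate: for each $\tau\in E_l\cap J$, writing $\sigma=\tau-t_\circ$ (so $|\sigma|\lesssim 2^{k-\gamma j}$) and $x(\sigma)=-\gamma\sigma|\xi_0|^{\gamma-2}\xi_0$, I claim
\[
\big|U_\tau^\gamma P_j f(x)\big|\gtrsim\rho^{d}\qquad\text{whenever }|x-x(\sigma)|\le c\rho^{-1}.
\]
To see this, substitute $\xi=\xi_0+\rho\eta$ in the integral defining $U_\tau^\gamma P_j f(x)$ and Taylor expand $\sigma|\xi_0+\rho\eta|^\gamma$ in $\eta$: with the above $x$ the term linear in $\eta$ is cancelled up to an error $O(c)$, while the quadratic remainder is $\lesssim|\sigma|\rho^{2}\,2^{(\gamma-2)j}\lesssim\varepsilon_0^2$ by the choice of $\rho$ --- here I use that the Hessian of $|\xi|^\gamma$ has size $\lesssim 2^{(\gamma-2)j}$ for $|\xi|\sim 2^j$, which is where $\gamma\neq1$ enters. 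Hence the phase oscillates by at most $\pi/4$ over the support, and since $\psi\phi\ge0$ with $\int\psi\phi\gtrsim1$ the claim follows. Consequently $\|U_\tau^\gamma P_j f\|_{L_x^r}\gtrsim\rho^{d}(\rho^{-d})^{1/r}=\rho^{d/r'}$ for each $\tau\in E_l\cap J$.

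Finally, summing $q$-th powers over the $\sim 2^{\alpha k}$ points $\tau\in E_l\cap J\subset\mathcal E_j$ yields $\|U_\tau^\gamma P_j f\|_{\ell_\tau^q(\mathcal E_j;L_x^r)}\gtrsim 2^{\alpha k/q}\rho^{d/r'}$, whereas the right-hand side of \eqref{discrete0} equals $C\,2^{(s+\gamma/q)j}[E]_\alpha^{1/q}\|f\|_2\sim [E]_\alpha^{1/q}\,2^{(\frac d2-\frac dr)j}\rho^{d/2}$ because $s+\frac\gamma q=\frac d2-\frac dr$. Dividing by $\rho^{d/r'}$ and inserting $\rho\sim 2^{\,j-k/2}$, \eqref{discrete0} would force
\[
2^{\,k\left(\frac\alpha q-\frac d2\left(\frac12-\frac1r\right)\right)}\lesssim [E]_\alpha^{1/q},
\]
with the implicit constant independent of $j$, hence of $k=\lfloor\gamma j\rfloor-1$; letting $j\to\infty$ contradicts $\frac\alpha q>\frac d2\big(\frac12-\frac1r\big)$, so \eqref{con-ad} holds. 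The main obstacle is the wave-packet step: one must keep the evolution coherent --- of size $\rho^d$ on a ball of radius $\rho^{-1}$ --- over a time window of length $2^{k-\gamma j}$, a factor $2^k$ beyond the natural uncertainty scale $2^{-\gamma j}$, which forces the frequency-localization width $\rho=\varepsilon_0\,2^{\,j-k/2}$ so that the dispersive (second-order) part of the phase remains bounded. The remaining points --- dealing with the fixed profile $\psi$, the separation bookkeeping for $\mathcal E_j$, and the case $\alpha=1$ --- are routine.
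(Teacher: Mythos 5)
Your proof is correct and follows essentially the same route as the paper's: the Cantor set from Lemma~\ref{ex}, a wave packet of frequency width $\sim 2^{j-k/2}$ chosen so that the dispersive (second-order) part of the phase stays bounded over a time window of length $2^{k-\gamma j}$, a count of $\sim 2^{\alpha k}$ separated points of $E$ in that window, and a limit yielding \eqref{con-ad} at critical regularity. The only immaterial differences are that you couple the window parameter to $j$ via $k\approx\gamma j$ and contradict the discrete form \eqref{discrete0}, whereas the paper keeps the two parameters independent ($\gamma j\gg m$) and lower-bounds $|E(2^{-\gamma j})\cap J|$ directly.
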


\begin{proof}
 Assuming $\gamma j\gg m$, we  apply $\rm (A)$ in Lemma \ref{ex} with
\[ l=\lfloor \gamma j \rfloor, \quad k=m-L\] for a large positive integer $L$. Thus, we have  a set $E$ of bounded $\alpha$-characteristic and an interval $J$   as in Lemma \ref{ex}.  The interval $J$ is  of length $ C_12^{m-\gamma j}$ and  includes  $C_2 2^{\alpha m}$ elements of  a $C_32^{-\gamma j}$-separated subset of $E$. 
Then, it is clear that 
\Be \label{mess0}
 | E(2^{-\gamma j})\cap J| \ge C 2^{\alpha m-\gamma j}  .\Ee

With a proper choice of $t_\circ,$ we may assume $J\subset (t_\circ, t_\circ+ c2^{m-\gamma j})\subset [0,1]$.  
Let $\xi_0\in \mathbb R^d$ be such that $ |\xi_0|=1$. 
Consider $f$ that is  given by 
\[ \widehat{f}(\xi) = \psi( 2^{\frac m2}| 2^{-j}\xi-\xi_0|) e^{-it_\circ|\xi|^\gamma}.\]
After translation  scaling $\xi\to 2^j\xi$ and  $\xi\to \xi+\xi_0$, we have  
\[ |U^\gamma_t P_j f(x)|\sim   2^{dj}  \Big| \int_{\mathbb R^d} \e^{i( 2^j x\cdot\xi + 2^{\gamma j} (t-t_\circ)\phi(\xi+\xi_0))} \psi(|\xi+\xi_0|) \psi( 2^{\frac m2}|\xi|) \,\mathrm{d}\xi\Big|, \]
where $\phi(\xi)=|\xi|^\gamma$.  Setting  $R(\xi)=\phi(\xi+\xi_0)-\phi(\xi_0)-\nabla\phi(\xi_0)\cdot \xi$ and   
\[  \Phi(x,t,\xi)=\big( 2^j x-  2^{\gamma j}(t-t_\circ)\nabla\phi(\xi_0)\big)\cdot\xi + 2^{\gamma j} (t-t_\circ)R(\xi), \]
we have 
\[
 |U^\gamma_t f(x)|\sim   2^{dj}  \Big| \int_{\mathbb R^d} \e^{i \Phi(x,t,\xi)} \psi(|\xi+\xi_0|) \psi( 2^{\frac m2}|\xi|) \,\mathrm{d}\xi\Big|.
\]

Since $\xi=O(2^{-m/2})$ and $R(\xi)=O(2^{-m})$ in the integral, we consequently have  
\Be
\label{lower-} |U^\gamma_t f(x)|\gtrsim 2^{dj}2^{-\frac d2 m},
\Ee
if 
$ |t-t_\circ|\le c2^{m-\gamma j}$ and $|x- 2^{(\gamma-1) j}(t-t_\circ) \nabla\phi(\xi_0)\big)| \le c2^{m/2-j}$  for a sufficiently small $c>0$.  
Thus, $\| U^\gamma_t P_j f\|_{L_x^r}\gtrsim C2^{dj}2^{-\frac d2 m} 2^{\frac d{2r} m-\frac dr j} $ for $t\in J$. Combining this with \eqref{mess0} yields  
\[  2^{dj}2^{-\frac d2 m}   2^{\frac d{2r} m-\frac dr j} 2^{\frac \alpha q m}  2^{-\frac \gamma q j} \le  C  \big\| U^\gamma_t P_j f\big\|_{L_t^q( E(2^{-\gamma j} \cap J); L_x^r)} .\]
Since $\|f\|_{H^s}\sim 2^{(s+\frac d2)j-\frac d4 m}$,  from  the estimate \eqref{f-stri} it follows that 
\[  2^{(d-\frac dr-\frac \gamma q) j} 2^{(\frac d{2r}+\frac\alpha q-\frac d2) m}   \le C  2^{(s+\frac d2)j-\frac d4 m}.   \] 
Using \eqref{con-reg}  with equality,  we obtain $2^{(\frac d{2r}+\frac\alpha q-\frac d4) m}   \le C$. Taking $m\to \infty$, we conclude that  \eqref{con-ad} holds.  
\end{proof}

\subsubsection{Necessity of \eqref{con-ad} and \eqref{con-reg2} for the estimate \eqref{f-stri10}}  
We now discuss sharpness of the estimate \eqref{f-stri10}.   Using the example in the proof of Proposition \ref{thm-nec}, it is easy to see that the estimate \eqref{f-stri1} holds true only if 
\eqref{con-reg2} is satisfied, provided that there is a point $t_\circ\in \supp \mu$ such that  
\begin{align}\label{alpha-dim-low}
	\mu( (t_\circ-\rho, t_\circ+ \rho)) \ge C\rho^\alpha
\end{align}
for $\rho\in (0, 1).$  Hence, we need only to focus on the necessity of \eqref{con-ad} for \eqref{f-stri10}.

\begin{prop}\label{prop:nec}  Let $\gamma\in (0, \infty) \setminus \{1\}$ and  \eqref{con-reg2} hold with equality.  Suppose that   the estimate  
\eqref{f-stri1}   holds  whenever  $\mu$ is a probability measure supported in $[0,1]$ satisfying \eqref{alpha-dim}.
Then, \eqref{con-ad} holds. 
 \end{prop}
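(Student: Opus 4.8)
The plan is to reuse the wave-packet example from the proof of Proposition~\ref{nec-del}, but to test the estimate \eqref{f-stri1} against the Ahlfors--David $\alpha$-regular measure supplied by part $\rm(B)$ of Lemma~\ref{ex}, rather than against a separated point set. One may assume $\alpha\in(0,1)$: when $\alpha=1$ the claim \eqref{con-ad} is the classical Strichartz necessity, which is reproduced by the same example with $\mu$ the Lebesgue measure restricted to $[0,1]$. So I would fix $\mu$ to be the measure of Lemma~\ref{ex}$\,\rm(B)$; it is a probability measure with $\supp\mu=E\subset[0,1]$, and \eqref{mess} gives $\langle\mu\rangle_\alpha\lesssim 1$, so the hypothesis applies to it with a fixed constant.

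Next I would fix $t_\circ\in\supp\mu$ and $\xi_0\in\mathbb R^d$ with $|\xi_0|=1$, and for integers $j,m$ with $1\ll m\le\gamma j$ take the test function $\widehat f(\xi)=\psi\big(2^{m/2}|2^{-j}\xi-\xi_0|\big)\,\e^{-it_\circ|\xi|^\gamma}$, exactly as in that proof. As recorded there, $\|f\|_{L^2}\sim 2^{dj/2-dm/4}$, so $\|f\|_{\dot H^s}\sim 2^{js}2^{dj/2-dm/4}$ by frequency localization, and $|U^\gamma_tf(x)|\gtrsim 2^{dj}2^{-dm/2}$ on the tube
\[
\big\{\,(x,t):|t-t_\circ|\le c\,2^{m-\gamma j},\ \ |x-2^{(\gamma-1)j}(t-t_\circ)\nabla\phi(\xi_0)|\le c\,2^{m/2-j}\,\big\},
\]
where $\phi(\xi)=|\xi|^\gamma$ and $c$ is a small positive constant; hence $\|U^\gamma_tf\|_{L_x^r}\gtrsim 2^{dj-dj/r}2^{-dm/2+dm/(2r)}$ whenever $|t-t_\circ|\le c\,2^{m-\gamma j}$.

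I would then integrate this bound against $\rd\mu$ over $(t_\circ-c\,2^{m-\gamma j},\,t_\circ+c\,2^{m-\gamma j})$ and apply the Ahlfors--David regularity \eqref{mess}, which (since $c\,2^{m-\gamma j}\le 1$) gives $\mu$-mass $\sim 2^{\alpha(m-\gamma j)}$ for that interval, to obtain
\[
\big\|U^\gamma_tf\big\|_{L_t^q(\rd\mu;L_x^r)}\;\gtrsim\;2^{dj-dj/r-dm/2+dm/(2r)}\,2^{\alpha(m-\gamma j)/q}.
\]
Feeding this together with $\|f\|_{\dot H^s}\sim 2^{js}2^{dj/2-dm/4}$ into \eqref{f-stri1} with $s=s_{\gamma\alpha}(r,q)=\frac d2-\frac dr-\frac{\gamma\alpha}{q}$, the powers of $2^j$ on the two sides should cancel exactly --- this is precisely where the assumption that \eqref{con-reg2} holds with equality is used --- leaving
\[
2^{\left(-\frac d4+\frac d{2r}+\frac\alpha q\right)m}\;\lesssim\;1,
\]
with a constant independent of $m$ (for each $m$ one chooses $j=j(m)$ with $\gamma j\ge m$). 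Letting $m\to\infty$ then forces $\frac\alpha q\le\frac d2\big(\frac12-\frac1r\big)$, which is \eqref{con-ad}.

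I do not anticipate a real obstacle: this is a routine adaptation of the proof of Proposition~\ref{nec-del}, and the only delicate point is the bookkeeping of the exponents of $2^j$ and of $2^m$. The reason the computation closes is structural: the $\mu$-mass of the relevant time interval is $\sim 2^{\alpha(m-\gamma j)}$ in place of the count $\sim 2^{\alpha m}$ of separated points used in Proposition~\ref{nec-del}, and the extra factor $2^{-\alpha\gamma j/q}$ is absorbed exactly by the passage from the critical exponent $s_\gamma(q,r)$ to $s_{\gamma\alpha}(r,q)$; the residual power of $2^m$ is what reproduces \eqref{con-ad} in the limit.
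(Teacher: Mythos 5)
Your proposal is correct and follows essentially the same route as the paper: the same test function, the same tube lower bound \eqref{lower-}, and the measure of Lemma~\ref{ex}, leading to the identical exponent bookkeeping in which the powers of $2^j$ cancel by the equality in \eqref{con-reg2} and the residual power of $2^m$ yields \eqref{con-ad}. The only (harmless) streamlining is that you lower-bound the $\mu$-mass of the interval around $t_\circ$ directly from the two-sided regularity \eqref{mess}, whereas the paper reaches the same bound $\mu(J)\gtrsim 2^{\alpha m-\alpha\gamma j}$ by first counting $\sim 2^{\alpha m}$ separated points via part $\rm(A)$ of Lemma~\ref{ex} and summing their $\mu$-masses.
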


\begin{proof}  We again use  Lemma \ref{ex}.   There are a set $E$ and  a measure $\mu$ with $\supp \mu=E$
satisfying \eqref{mess} and,  consequently,  \eqref{alpha-dim}.  As in the proof of Proposition \ref{nec-del},  taking $l=\lfloor \gamma j \rfloor$ and $k=m-L$ for a large positive integer $L$ and assuming $\gamma j\gg m$, we apply $\rm (A)$ in Lemma \ref{ex}.  Then, there is  an interval $J$  of length $ C_12^{m-\gamma j}$, which includes  $C_2 2^{\alpha m}$ elements of  a $C_32^{-\gamma j}$-separated subset $\tilde E$ of $E$. 

This means that $J$ contains disjoint intervals $J_1, \dots,  J_n$ of length $C_32^{-\gamma j-1}$  with $n\sim 2^{\alpha m}$, and  each of the intervals  $J_1, \dots,  J_n$ contains exactly one element of $\tilde E$.   Thus, by  \eqref{mess}, we have $\mu(J_k)\sim 2^{-\alpha\gamma  j} $ for $k=1, \dots, n$. Therefore, 
\[    \mu(J)\ge C 2^{\alpha m}  2^{-\alpha\gamma  j}.  \]
Combining this with \eqref{lower-}, we see that  the estimate \eqref{f-stri1} implies 
\[  2^{dj}2^{-\frac d2 m}   2^{\frac d{2r} m-\frac dr j} 2^{\frac\alpha  q m}  2^{-\frac{\alpha\gamma}q  j} \le C  2^{(s+\frac d2)j-\frac d4 m}. \] 
Consequently, using \eqref{con-reg2} with equality and letting $j\to \infty$ yield the condition \eqref{con-ad}. 
\end{proof}

\section{Inhomogeneous estimates}
\label{sec:inho}

In this section, we prove Theorem \ref{thm_inhom1}.   The proofs of the estimates  \eqref{f-inhom} and \eqref{inho-est} are almost identical, so we prove   
\eqref{inho-est} only.

For $j\in \mathbb N_0$,  consider an operator $\mathcal{U}_j$ given by
\[
	\mathcal{U}_jF(x,t) 
	= \int_{[0,t]} \mathrm{e}^{i(t - s)(-\Delta)^{\gamma/2}} P_j F(s)\,\mathrm{d}\mu (s).
\]
Since $2\le  \tilde r, r <\infty$ and $\tilde q, q\ge 2$, by the Littlewood--Paley inequality  
the  estimate \eqref{inho-est}  follows  if we show
\begin{align}\label{250529_0034}
	\big\|  \mathcal{U}_jF\big\|_{L_t^q(\mathrm{d}\mu; L_x^r)} \le C\| F\|_{L_s^{\tilde{q}'}(\mathrm{d}\mu ; L_x^{\tilde{r}'})}. 
\end{align}

\subsection*{Proof of the estimate  \eqref{250529_0034}}  
By duality, we have 
\[
	\| \mathcal{U}_j(F) \|_{L_t^q(\mathrm{d}\mu; L_x^r)} 
	= \sup_{G \in L_t^{q'}\!(\mathrm{d}\mu; L_x^{r'})} \iint_{\{ s :0<s\leq t\}} \langle U_s^\gamma  P_j F (s), U_t^\gamma \tilde P_j G(t)\rangle_x \,\mathrm{d}\mu (s) \mathrm{d}\mu(t).
\]
As in the endpoint case discussed before, we consider 
\[
	\mathfrak{T}(F,G) = \iint_{\{ s :0<s\leq t\}} \langle U_s^\gamma P_j F (s), U_t^\gamma \tilde P_j G(t)\rangle_x \,\mathrm{d}\mu (s) \mathrm{d}\mu(t).
\]
To obtain \eqref{250529_0034}, it suffices to obtain the equivalent  estimate 
\Be 
\label{bi}
|\mathfrak{T}(F,G)|  \le C 2^{j s}\| F\|_{L_s^{\tilde{q}'}(\mathrm{d}\mu ; L_x^{\tilde{r}'})}  \| G \|_{L_t^{q'}(\mathrm{d}\mu; L_x^{r'})}.
\Ee

Recalling \eqref{chi-k}, we  set
\begin{align*}
\mathfrak{T}_k(F,G)= \iint  \chi_k(t-s)\langle U_s^\gamma P_j F (s), U_t^\gamma \tilde P_j G(t)\rangle_x \,\mathrm{d}\mu (s) \mathrm{d}\mu(t).
\end{align*}
Thus, we have
\begin{align*}
	\mathfrak{T}(F,G) =\sum_{k\ge 0} \mathfrak{T}_k(F,G).
\end{align*}

Now, the proof of the estimate  \eqref{bi} basically reduces to  establishing  the following lemma.

\begin{lem}\label{lem_inhom1}  Let $\alpha \in (0,1]$ and $\sigma_{\alpha}^\gamma(\tilde{r}, r, \tilde{q}, q)$ be given by \eqref{con-inh-s}.   If $\alpha \neq d/2$, then  one has the estimate 
	\Be
	\label{Tkj}
		| \mathfrak{T}_k(F,G) | \leq C 2^{\sigma_{\alpha}^\gamma(\tilde{r}, r, \tilde{q}, q)j} 2^{- \lambda_{\alpha}(\tilde{r}, r, \tilde{q}, q)k} \|F\|_{L_s^{\tilde q'} (\mathrm{d}\mu; L_x^{\tilde r'})} \| G\|_{L_t^{q'}(\mathrm{d}\mu; L_x^{r'})}
	\Ee
	for  $(1/\tilde r, 1/r)\in \mathcal Q$ (see \eqref{aa}) and  $\tilde q, q$ satisfying  \eqref{ran_q_up} and \eqref{ran_q_low}, 
	where  
	\Be
\label{con-inh-lambda}  	 \lambda_{\alpha}(\tilde{r}, r, \tilde{q}, q) =\frac d2 \Big(1 - \frac1{\tilde{r}}-\frac1r \Big)  - \Big(\frac{\alpha}{\tilde{q}} + \frac\alpha q \Big). 
\Ee
	If $\alpha=d/2$, then  one has the estimate \eqref{Tkj}  for  $2\le \tilde r, r<\infty$ and  $\tilde q, q$ satisfying  \eqref{ran_q_up} and \eqref{ran_q_low}. 
\end{lem}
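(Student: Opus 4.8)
The plan is to mimic the treatment of Lemma~\ref{lem_interpol1} and Lemma~\ref{lem_interpol11}: establish \eqref{Tkj} at a few extreme configurations of $(\tilde r,r)$ --- the diagonal vertices $(1/2,1/2)$ and $O=(0,0)$ of $\mathcal Q$, and the off-diagonal configurations $(1/2,1/r)$, $(1/r,1/2)$ with $2\le r<r_\ast$ --- and then fill out $\mathcal Q$, together with the allowed range of the time exponents $\tilde q,q$, by interpolation. The dependence on $\tilde q,q$ will be produced, at each configuration, by Hölder's inequality in the two time variables against $\mathrm d\mu$; this is precisely what turns powers of $2^{-\gamma\alpha j}$ and $2^{k\alpha}$ into the combinations entering $\sigma^\gamma_\alpha$ and $\lambda_\alpha$. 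First I would run the standard reduction of Remark~\ref{interval} (the inequality it requires on the time exponents holds automatically since $\tilde q,q\ge2$): it suffices to prove \eqref{Tkj} when $F(x,\cdot)$ and $G(x,\cdot)$ are supported, for every $x$, in one common interval $J$ of length $\sim 2^{k-\gamma j}$, for which \eqref{alpha-dim} gives $\mu(J)\le C\,2^{(k-\gamma j)\alpha}\langle\mu\rangle_\alpha$.

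At the diagonal configurations I would use the dispersive bounds of Lemma~\ref{rr'}: the pairing $\langle U^\gamma_sP_jF(s),U^\gamma_t\tilde P_jG(t)\rangle_x$ is controlled by $C\|F(s)\|_{L^2_x}\|G(t)\|_{L^2_x}$ at $(\tilde r,r)=(2,2)$ (Plancherel), and by $C\,2^{dj}2^{-kd/2}\|F(s)\|_{L^1_x}\|G(t)\|_{L^1_x}$ at $(\tilde r,r)=(\infty,\infty)$, since the relevant frequency-localized propagator has $L^1_x\to L^\infty_x$ norm $\lesssim 2^{dj}(1+2^{\gamma j}|t-s|)^{-d/2}\sim 2^{dj}2^{-kd/2}$ on $\supp\chi_k$. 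Integrating over $J\times J$ and applying Hölder in $s$ with exponents $(\tilde q',\tilde q)$ and in $t$ with $(q',q)$ introduces $\mu(J)^{1/\tilde q+1/q}\lesssim 2^{(k-\gamma j)\alpha(1/\tilde q+1/q)}\langle\mu\rangle_\alpha^{1/\tilde q+1/q}$, and a short computation identifies this (times $2^{dj}2^{-kd/2}$ in the $(\infty,\infty)$ case) with $2^{\sigma^\gamma_\alpha(\tilde r,r,\tilde q,q)j}2^{-\lambda_\alpha(\tilde r,r,\tilde q,q)k}$. This gives \eqref{Tkj} at both configurations for all $\tilde q,q\ge2$.

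For the off-diagonal configuration $(\tilde r,r)=(2,r)$, $2\le r<r_\ast$ --- where \eqref{ran_q_low} forces $q=2$ and allows any $\tilde q\ge2$, the configuration $(r,2)$ following by symmetry with $\tilde q\leftrightarrow q$ --- I would write $\mathfrak T_k(F,G)=\int\langle U^\gamma_sP_jF(s),\widetilde H(s)\rangle_x\,\mathrm d\mu(s)$ with $\widetilde H(s)=\int\chi_k(t-s)U^\gamma_t\tilde P_jG(t)\,\mathrm d\mu(t)$, bound $\sup_s\|\widetilde H(s)\|_{L^2_x}$ by expanding the square and using Lemma~\ref{rr'} together with the estimate $\sup_t\|\chi_{\{|t-\cdot|\lesssim 2^{k-\gamma j}\}}\,\mathcal K_r(t,\cdot)\|_{L^1(\mathrm d\mu)}\lesssim 2^{-\gamma\alpha j}2^{(\alpha-d(1/2-1/r))k}\langle\mu\rangle_\alpha$ (the dyadic sum converges exactly because $r<r_\ast$, i.e. $\alpha-d(1/2-1/r)>0$, and is obtained just as in the proof of Lemma~\ref{lem_interpol11}), and finally apply Hölder in $s$ to extract $\|F\|_{L^{\tilde q'}_s(\mathrm d\mu;L^2_x)}$ and a factor $\mu(J)^{1/\tilde q}$; again the $j$- and $k$-powers assemble into $2^{\sigma^\gamma_\alpha j}2^{-\lambda_\alpha k}$. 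With \eqref{Tkj} in hand at all these extreme configurations, and since $\sigma^\gamma_\alpha$ and $\lambda_\alpha$ are affine in $(1/\tilde r,1/r,1/\tilde q,1/q)$ while the admissible $(\tilde q,q)$-region \eqref{ran_q_up}--\eqref{ran_q_low} is the polytope they span, (multi)linear interpolation of the bilinear form $\mathfrak T_k$ fills the interior of $\mathcal Q$ with the stated range; the corners $A=(1/2,1/r_\ast)$ and $A'=(1/r_\ast,1/2)$, where the dyadic sum above degenerates into a logarithmically divergent one, are reached instead by the bilinear real interpolation of Lemma~\ref{lem_bi_interpol}, recasting $\{\mathfrak T_k\}_k$ as a vector-valued bilinear operator into weighted sequence spaces exactly as in Section~\ref{sec_endpoint}. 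The case $\alpha=d/2$ (hence $d\in\{1,2\}$, $r_\ast=\infty$, $\mathcal Q=[0,1/2]^2$) has no corner and is covered by the diagonal configurations, the configurations $(1/2,1/r),(1/r,1/2)$ with $2\le r<\infty$, and interpolation.

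I expect the passage to the corners $A,A'$ to be the main obstacle: one cannot estimate $\mathfrak T_k$ directly at $r=r_\ast$ without a logarithmic loss, and must instead set up the weighted vector-valued bilinear operator and invoke Lemma~\ref{lem_bi_interpol}, with the $k$-index weights matched to $\lambda_\alpha$, as in the homogeneous endpoint argument. The remaining difficulty is bookkeeping: verifying that $2^{dj}$, $2^{-kd/2}$, $2^{-\gamma\alpha j}$, the dyadic kernel sums $\sum_l 2^{-ld(1/2-1/r)}\mu(\{|t-\cdot|\sim 2^{l-\gamma j}\})$, and the Hölder factors $\mu(J)^{1/\tilde q}$, $\mu(J)^{1/q}$ combine through each configuration and through the interpolation into exactly $\sigma^\gamma_\alpha(\tilde r,r,\tilde q,q)$ and $\lambda_\alpha(\tilde r,r,\tilde q,q)$.
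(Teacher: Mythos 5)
Your diagonal computations are exactly the paper's: Plancherel at $(\tilde r,r)=(2,2)$ and the dispersive bound at $(\infty,\infty)$, followed by H\"older in each time variable against $\mu(J)\lesssim 2^{(k-\gamma j)\alpha}$, reproduce \eqref{2-2} and \eqref{0-0}, and the reduction via Remark~\ref{interval} is used in the same way. Where you genuinely diverge is at the off-diagonal vertices $A,A'$. The paper does \emph{not} rebuild the endpoint there: it freezes $t$, applies the already-established endpoint homogeneous Strichartz estimate for the measure $\mu$ (Theorem~\ref{cor_main}/Proposition~\ref{lem_discrete_Str1} with $(q,r)=(2,r_\ast)$, in dual form) to the inner integral $\int U_s^\gamma P_j\,\tilde\chi_k(2^{\gamma j}(t-s))F(s)\,\mathrm d\mu(s)$, and then H\"olders in $s$ and $t$ using the $2^{k-\gamma j}$ support. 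This yields the sharp single-$k$ bound \eqref{Tkj} \emph{at the corner itself}, reusing the bilinear interpolation already performed once in Section~\ref{sec_endpoint} rather than redoing it. Your route --- kernel/Schur bounds at $(2,r)$ with $r<r_\ast$ (essentially Lemma~\ref{lem_interpol11}) plus a fresh bilinear interpolation in $k$ --- is the longer path, and it is precisely there that your argument has gaps.

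Concretely: (i) Lemma~\ref{lem_bi_interpol} can only land on the corner if you have single-$k$ bounds with $k$-weight exponents of \emph{both} signs, i.e.\ configurations with $r$ on both sides of $r_\ast$; all the configurations you list have $r<r_\ast$, and the constants in those bounds blow up as $r\to r_\ast$ (the geometric sum $\sum_{l\le k}2^{(\alpha-d(1/2-1/r))l}$ degenerates), so no limiting or one-sided interpolation argument reaches $A$. You would need to also prove the (non-sharp, differently weighted) bound for some $r_1>r_\ast$ and check that the resulting weights interpolate to exactly $\lambda_\alpha$ at the corner --- and unlike Section~\ref{sec_endpoint}, where the time exponent is frozen at $\ell^2_\tau$, here the weights depend on $(\tilde q,q)$ and the corner pins $q=2$ (resp.\ $\tilde q=2$), so ``exactly as in Section~\ref{sec_endpoint}'' does not literally apply. (ii) Your parenthetical that \eqref{ran_q_low} forces $q=2$ at $(\tilde r,r)=(2,r)$ for $r<r_\ast$ is false (it forces $q=2$ only at $r=r_\ast$); this is harmless since $q=2$ is the extreme value and the rest of the range follows by interpolation with the diagonal estimates, but it should be stated correctly. (iii) The case $\alpha>d/2$ (relevant for $d=1$), which is part of the lemma since then $\mathcal Q=[0,1/2]^2$ with corner $(\tilde r,r)=(2,\infty)$, is not addressed at all; the paper handles it by invoking the frequency-localized Strichartz estimate at $(q_\circ,r)=(4\alpha/d,\infty)$. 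I would recommend replacing your corner argument by the paper's: cite \eqref{f-stri1} at $(2,r_\ast)$ for the inner integral, which closes all three issues at once.
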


Since we are imposing the condition \ref{scaling} and $\gamma \ge 2$,  we have
\[ 
 \lambda_{\alpha}(\tilde{r}, r, \tilde{q}, q) =\Big(\frac \gamma 2-1\Big) \Big(\frac{\alpha}{\tilde{q}} + \frac\alpha q \Big)\ge 0.\]
Once  Lemma~\ref{lem_inhom1} is established,   the estimate \eqref{bi}  follows by direct summation when  $\lambda_{\alpha}(\tilde{r}, r, \tilde{q}, q)> 0$, 
and by the bilinear interpolation argument in Section~\ref{sec_endpoint} when  $\lambda_{\alpha}(\tilde{r}, r, \tilde{q}, q)=0$ except for the cases $(1/\tilde r, 1/r)=C,C'$ (see \eqref{cd}).

\begin{proof}  We consider the cases $\alpha< d/2$, $\alpha> d/2$, and  $\alpha=d/2$, separately.  We handle the case $\alpha< d/2$ first. 

We prove this case  by interpolating the estimate  \eqref{Tkj} for the cases $(1/\tilde{r}, 1/r)=(0,0),$ $(1/2, 1/2),$  $A=(1/2, ((d-2\alpha)/2d),$ $A'=((d-2\alpha)/2d,1/2)$. For the purpose, as in the previous section,  we may assume that $F(x, \cdot)$ and $G(x,\cdot)$ are supported in intervals of length $2^{k-\gamma j }$ since $t, s$ satisfy $|t-s|\sim 2^{k-\gamma j }$ and $\tilde q' \le q$.

First, let $r=\tilde{r} = \infty$.  
It is clear that 
$(U_t^\gamma \tilde P_j)^* U_s^\gamma P_j$ has the same bound as $T_{s,t}$ given by \eqref{utut}. 
Thus, by \eqref{gr} and \eqref{250309_1101} with $r=\infty$, one has
\begin{align*}
	|\mathfrak{T}_k(F,G)|
	& \le C  2^{dj} 2^{-\frac d2 k}  \iint_{|t-s|\le 2^{k-\gamma j }}\|F(s) \|_{L_x^1}  \| G(t)\|_{L_x^1}\,\mathrm{d}\mu(s) \mathrm{d}\mu(t) \\
	& \le C  2^{dj} 2^{-\frac d2 k} \|F\|_{L_s^1 (\mathrm{d}\mu; L_x^1)} \| G\|_{L_t^1(\mathrm{d}\mu; L_x^1)}.
\end{align*}
By  the $t$-support assumption on $F$ and $G$, H\"older's inequality, and the assumption \eqref{alpha-dim}, it follows that
\[
	|\mathfrak{T}_k(F,G)|
	\leq C 2^{dj} 2^{-\frac d2 k} \left(  2^{\frac{k - \gamma j }{\tilde{q}}{\alpha}} \| F \|_{L_s^{\tilde q'}(\mathrm{d}\mu ; L_x^1)}\right) \left(  2^{\frac{k - \gamma j }{q} \alpha}  \|G\|_{L_t^{q'}(\mathrm{d}\mu; L_x^1)}\right)
\]
for $q, \tilde{q}\geq 1$. Thus, we have
\Be
\label{0-0}
	|\mathfrak{T}_k(F,G)|
	\leq 
		C 2^{j(d-  \frac{\gamma{\alpha}}{\tilde{q}} - \frac{\gamma\alpha}{q}  )} 
		2^{-k(\frac d2 - \frac{\alpha}{\tilde{q}} - \frac{\alpha}{q}  )} 
		\| F \|_{L_s^{\tilde q'}(\mathrm{d}\mu ; L_x^1)}  \|G\|_{L_t^{q'}(\mathrm{d}\mu; L_x^1)}.
\Ee
Recalling  \eqref{con-inh-s} and \eqref{con-inh-lambda},  we see that the above inequality is just \eqref{Tkj} for $r=\tilde{r} = \infty$  and  $q, \tilde{q}\geq 1$. 

Secondly, let $r = \tilde{r} = 2$. By Plancherel's theorem,  one has
\begin{align*}
	|\mathfrak{T}_k(F,G)| 
	\leq &\| F \|_{L_s^1(\mathrm{d}\mu ; L_x^2)} \| G\|_{L_t^1(\mathrm{d}\mu; L_x^2)}.
\end{align*}
Thus, H\"older's inequality gives 
\Be 
\label{2-2}
	|\mathfrak{T}_k(F,G)|
	\leq  C  2^{-j(\frac{\gamma{\alpha}}{\tilde{q}} + \frac{\gamma\alpha}{q})} 2^{k( \frac{\alpha}{\tilde{q}}+\frac{\alpha}{q})}  \| F \|_{L_s^{\tilde{q}'}(\mathrm{d}\mu ; L_x^2)} \| G\|_{L_t^{q'}(\mathrm{d}\mu; L_x^2)}.
\Ee
 This shows \eqref{Tkj} for $r=\tilde{r} =2$  and  $q, \tilde{q}\geq 1$.  
 Indeed, from \eqref{con-inh-s} and \eqref{con-inh-lambda}, we  note that $
	\sigma_{\alpha}^\gamma(q, \tilde{q}, 2, 2) = -{\gamma{\alpha}}/{\tilde{q}} - {\gamma\alpha}/{q}$ and $\lambda_{\alpha}(q, \tilde{q}, 2, 2)=- {\alpha}/{\tilde{q}} -\alpha/{q}. $

Thirdly, let $\tilde{r}= {2d}/(d-2\alpha)$ and $r=2$. 
Note that \begin{align*} 	\mathfrak{T}_k(F,G) 	= \int \Big\langle  \int  U_s^\gamma P_j F_{k,t}(s)\,\mathrm{d}\mu (s), U_t^\gamma  \tilde P_j G(t) \Big\rangle\,\mathrm{d}\mu(t),  \end{align*}
where $F_{k, t} (s) =\chi_k(t-s) F(s)$. 
The Cauchy-Schwarz inequality in $x$-variable yields 
\begin{align*}
	|\mathfrak{T}_k(F,G)|  
	&\leq \sup_t \Big\| \int  U_s^\gamma P_j F_{k,t}(s) \,\mathrm{d}\mu (s)\Big\|_2   \int   \| G(t) \|_{L_x^2}\,\mathrm{d}\mu(t).
	\end{align*}
Using  the estimate \eqref{f-stri1}  from Theorem \ref{cor_main} with $\widetilde r = 2d/(d-2\alpha)$ and $q=2$ (actually, the dual form of the estimate), we have
	\begin{align*}
|\mathfrak{T}_k(F,G)|  
	&\leq  C\int 2^{(\alpha-\frac{\gamma\alpha}2)j} \| F \|_{L_s^2(\mathrm{d}\mu ; L_x^{\frac{2d}{d+2\alpha}})} \| G(t) \|_{L_x^2}\,\mathrm{d}\mu(t),
\end{align*}
where the exponent is determined by
\eqref{con-reg}.  Thus, 
\begin{align*}
	|\mathfrak{T}_k(F,G)| 
	\le C  &2^{(\alpha-\frac{\gamma\alpha}2)j} \| F \|_{L_s^2(\mathrm{d}\mu ; L_x^{\frac{2d}{d+2\alpha}})} \| G\|_{L_t^1(\mathrm{d}\mu; L_x^2)}.
	\end{align*}
		Since the $t$-supports of  $F$ and $G$ are contained in an interval $I$ of length $2^{k-\gamma j }$,  by  the assumption \eqref{alpha-dim}  and H\"older's inequality,   we obtain 
	\begin{align*} 
	|\mathfrak{T}_k(F,G)|  	\leq &2^{ (\alpha- \frac{\gamma{\alpha}} {\tilde q} - \frac {\gamma\alpha} q)j} 2^{-(\frac{\alpha}2 - \frac {\alpha} {\tilde q} -\frac \alpha q)k} \| F \|_{L_s^{\tilde q'}(\mathrm{d}\mu; L_x^{\frac{2d}{d+2\alpha}})} \| G\|_{L_t^{q'}(\mathrm{d}\mu; L_x^2)}
\end{align*}
 for $2 \le \tilde q\, ' \le q$.
Recalling   \eqref{con-inh-s} and \eqref{con-inh-lambda}, consequently, we see that this establishes the estimate \eqref{Tkj}  for $\tilde{r}= {2d}/(d-2\alpha)$ and $r=2$ for $2 \le \tilde q\, ' \le q$.

Now, we interpolate  the estimate \eqref{Tkj} for  $(1/\tilde r, 1/r)=(0,0), (1/2, 1/2)$, and $A'$, which are established above. Note that the condition  \eqref{ran_q_up} is satisfied 
for each case.  That is to say,   we have   \eqref{Tkj}  for $1\le \widetilde q\,'\le q\le\infty$ when $(1/\widetilde r, 1/r) = (0,0), (1/2, 1/2)$, and   for  $2\le \widetilde q \,' \le q\le\infty$  when $(1/\widetilde r, 1/r) = A'$.  Interpolation between those estimates yields  \eqref{Tkj}
 for  all  $(1/\tilde r, 1/r)\in \mathcal Q$ satisfying $1/r \ge 1/\tilde r$, provided that $\tilde q, q$ satisfy   \eqref{ran_q_up}.

We now consider the case $\tilde{r} = 2$ and $r = \frac{2d}{d - 2\alpha}$. By symmetry, this case can be handled using the same argument as in the previous case by interchanging the roles of $F$ and $G$. That is, writing
\begin{align*}
	\mathfrak{T}_k(F,G) = \int \Big\langle U_s^\gamma P_j F(s), \int \chi_k(t - s) U_t^\gamma \tilde{P}_j G(t)\, \mathrm{d}\mu(t) \Big\rangle\, \mathrm{d}\mu(s),
\end{align*}
we obtain the estimate
\[
|\mathfrak{T}_k(F,G)| 
\le C\, 2^{(\alpha - \frac{\gamma\alpha}{2})j} \|F\|_{L_s^1(\mathrm{d}\mu; L_x^2)} \|G\|_{L_t^2(\mathrm{d}\mu; L_x^{\frac{2d}{d + 2\alpha}})}.
\]
Then, applying H\"older's inequality together with \eqref{alpha-dim}, we obtain the estimate \eqref{Tkj} for $\tilde{r} = 2$, $r = \frac{2d}{d - 2\alpha}$, and $1 \le \tilde q \,' \le q \le 2$. As before, by interpolating the estimates corresponding to the cases $(1/\tilde{r}, 1/r) = (0,0)$, $(1/2, 1/2)$, and $A$, we conclude that \eqref{Tkj} holds for all $(1/\tilde{r}, 1/r) \in \mathcal{Q}$ satisfying $1/r \le 1/\tilde{r}$, and for $\tilde{q}, q$ satisfying \eqref{ran_q_low}.  This completes the proof for the case $\alpha< d/2$.

Let us consider the case $\alpha> d/2$. To this end, we recall the  estimate \eqref{f-stri1p}  that holds  for $2\le q,r \leq \infty$ satisfying \eqref{con-ad} and  $(q,r)\neq (2,\infty)$.  Thanks to frequency localization,  \eqref{f-stri1p} remains valid for $(q_\circ,r)=(4\alpha/d, \infty)$.  Using this estimate similarly as above,  we have  
\[|\mathfrak{T}_k(F,G)| 
	\le C  2^{(\frac d2-\frac{\gamma\alpha}{q_\circ})j} \| F \|_{L_s^{q_\circ'}(\mathrm{d}\mu ; L_x^{1})} \| G\|_{L_t^1(\mathrm{d}\mu; L_x^2)}.\]
Thus, by H\"older's inequality and \eqref{alpha-dim}, we obtain \eqref{Tkj}  for $(1/\tilde r, 1/r)=(0, 1/2)$ and $0\le 1/q \le 1/\tilde q'\le 1-d/(4\alpha)$.  Note that 
 $\tilde q, q$ here satisfy   \eqref{ran_q_up} for $(1/\tilde r, 1/r)=(0, 1/2)$.  Thus, interpolation with the estimates \eqref{Tkj}  for $(1/\tilde r, 1/r)=(0,0)$ and $(1/2, 1/2)$ obtained above (i.e., the estimates \eqref{0-0} and \eqref{2-2})     gives the desired estimate for $(1/\tilde r, 1/r)\in \mathcal Q=[0,1/2]\times [0,1/2]$ satisfying $1/r \ge 1/\tilde r$, and  $\tilde q, q$ satisfying    \eqref{ran_q_up}. The case $(1/\tilde r, 1/r)\in \mathcal Q$ satisfying $1/r \le 1/\tilde r$ can be handled symmetrically as before. We omit the detail.

Finally, we consider the case $\alpha=d/2$. In this case, unfortunately,  the estimate \eqref{f-stri1p} is  not available for $(q, r)=(2,\infty)$.  
However, we can take $(q_\ast, r_\ast)$ such that $1/q_\ast +1/r_\ast=1/2$ and $2< q_\ast <r_\ast$, so \eqref{f-stri1p} holds for $(q, r)=(q_\ast, r_\ast)$. 
Using this estimate as above, one obtains  \eqref{Tkj} with $(1/\tilde r, 1/r)=(1/r_\ast, 1/2)$ for which $\tilde q, q$ satisfy  \eqref{ran_q_up}.  Interpolation with the estimates \eqref{Tkj}  for $(1/\tilde r, 1/r)=(0,0)$ and $(1/2, 1/2)$  (i.e., the estimates \eqref{0-0} and \eqref{2-2})  gives   \eqref{Tkj} for $(1/\tilde r, 1/r)$ contained in the triangle 
with vertices $(0,0), (1/2, 1/2),$ and $(1/{r_\ast}, 1/2)$,  and  $\tilde q, q$ satisfying    \eqref{ran_q_up}.  As before, a  symmetric argument gives \eqref{Tkj} for $(1/\tilde r, 1/r)$ contained in the triangle 
with vertices $(0,0), (1/2, 1/2),$ and $(1/2, 1/{r_\ast})$. Thus,  we get \eqref{Tkj} for $(1/\tilde r, 1/r)$ contained in the quadrangle with  vertices $(0,0), (1/2, 1/2),$ $(1/{r_\ast}, 1/2)$, and $(1/2, 1/{r_\ast})$. Now, taking $r_\ast\to \infty$, we obtain  get \eqref{Tkj} for $(1/\tilde r, 1/r)\in (0,1/2]\times (0, 1/2]$. This completes the proof. 
\end{proof}

Before closing this section, we make  remarks 
regarding the inhomogeneous estimates with  additional regularity, and similar estimates for wave equations.

\begin{rem}\label{rem:inhomo} 
If one allows additional regularity parameter in \eqref{f-inhom}, there are further available estimates. Indeed, consider 
\begin{align}\label{f-inhom-s}
	&\Big\| \int \mathrm{e}^{i(t - s)(-\Delta)^\gamma} F(s)\,\mathrm{d}\mu(s) \Big\|_{L_t^q(\mathrm{d}\mu; \dot L_x^{r,-\sigma})} \le C \| F\|_{L_s^{\tilde{q}'}(\mathrm{d}\mu ; \cdot \dot L_x^{\tilde{r}', \sigma })}
\end{align}
for some $\sigma $.  Here $\dot L^{p, s}_x$ denotes the homogeneous $L^p$ Sobolev space with regularity order $s$.  For the estimate \eqref{f-inhom-s} the scaling condition 
\[ \sigma_{\alpha}^\gamma(\tilde{r}, r, \tilde{q}, q) =2\sigma\]
is necessary. Moreover,   the inequality 
$\lambda_{\alpha}(\tilde{r}, r, \tilde{q}, q) \ge 0$ has to be satisfied,  instead of  simply imposing $\gamma\ge 2$. 
\end{rem}

\begin{rem}\label{inhomo-wave}   
One can obtain analogous results for the wave case, i.e., when $\gamma = 1$, since we have \eqref{dis-wave} and  \eqref{f-stri1} for $(q, r) \in [2, \infty] \times [2, \infty] \setminus \{(2, \infty)\}$ satisfying \eqref{con-reg} with $\gamma = 1$ and \eqref{con-ad-w}. In fact, by replacing $d$ with $d - 1$ in \eqref{aa}, \eqref{cd}, and \eqref{con-inh-lambda}, we define $
	\tilde{A} = (\frac{1}{2}, \frac{d - 1 - 2\alpha}{2(d - 1)} ),$  $\tilde{A}'$, as well as  $\tilde{C}, \tilde{C}'$, and  $\tilde{\lambda}_\alpha$, which replace $A, A', C, C'$, and $\lambda_\alpha$, respectively.  Then, a statement analogous to that of Theorem~\ref{thm_inhom1} holds. That is, under the same assumptions as in Theorem~\ref{thm_inhom1} with $d$ replaced by $d - 1$, the estimate \eqref{inho-est} with $\gamma = 1$ holds for all $\sigma  \ge \sigma_{\alpha}^1(\tilde{r}, r, \tilde{q}, q)$ and $\widetilde{\lambda}_\alpha(\widetilde r, r, \widetilde q, q) =  \frac{d-1}2 (1 - \frac1{\widetilde r} - \frac1r) - (\frac \alpha{\widetilde q} + \frac\alpha q)\ge0$.
\end{rem}

\section{$L^2$ local smoothing estimates}
\label{sec:l2-ls}

In this section, we provide the proofs of Theorems \ref{thm_LS} and \ref{thm_LS-homo}, which rely on a temporal localization lemma, Lemma \ref{lem_CLV} below.

\subsection{Proof of Theorem \ref{thm_LS}}  
Let $\omega\in \mathcal S(\mathbb R^d)$ such that $\supp \widehat \omega \subset  \mathbb B^d$  and   $\omega \ge 1$ on $\mathbb B^d$. 
Thus, the estimate  \eqref{ls-s} follows if we show 
\[\left\| \omega U_t^\gamma f \right\|_{L_{t}^2 (\rd\mu; L^2_x(\mathbb{B}^d))} \leq C \|f\|_{H^s}.\]
By the Littlewood--Paley decomposition and orthogonality between $ \omega U_t^\gamma P_jf$ as a function of the spatial variable,  it suffices to show 
$\left\| \omega U_t^\gamma  P_j f \right\|_{L_{t}^2 (\rd\mu; L^2_x(\mathbb{B}^d))} \leq C \|f\|_{H^s}$ 
for each $j$. Thanks to the rapid decay of $\omega$, this in turn follows from
\Be\label{lse_j} 	\| U_t^\gamma P_j f \|_{L_{t}^2 (\rd\mu; L^2_x(\mathbb{B}^d))}^2 \leq C2^{\alpha(1-\gamma)j} \|f\|_2^2.  \footnote{In fact, we need to apply this local estimate again using 
dyadic decomposition in the spatial variable and scaling. However, the desired estimate follows without difficulty by virtue of the rapid decay of $\omega$.}
\Ee

Let ${\mathcal J}$ be a collection  of disjoint intervals  $J\subset [0,1]$ of length $2^{(1-\gamma)j}$  which covers $E$, so 
\[ E\subset \bigcup_{J\in \mathcal J} J.\]
Thus, we have 
\Be 
\label{jjjj}
	\| U_t^\gamma P_j f \|_{L_{t}^2 (\rd\mu; L^2_x(\mathbb{B}^d))}^2 = \sum_{J \in \mathcal J}  \| U_t^\gamma P_j f\|_{{L_{t}^2 (\rd\mu, J; L^2_x(\mathbb{B}^d))}}^2.
\Ee

Now,  we recall a useful lemma from \cite{CLV}, which makes it possible to localize \eqref{lse_j} on time intervals of length $2^{(1-\gamma)j}$. 
Let $Q(\cdot, t)$ be a real valued smooth function satisfying 
\begin{align}
	\left| \nabla_\xi (Q(\xi, t) - Q(\xi, s)) \right| &\sim |t -s| |\xi|^{\gamma-1},\label{250629_1530}\\
	\left| \partial_\xi^\beta (Q(\xi, t) - Q(\xi, s)) \right| & \leq C |t-s| |\xi|^{\gamma - |\beta|} \label{250629_1531}
\end{align}
for $t,s \in [-2,2]$ and $|\xi| \geq1$. Let $T$ be defined  by
\[
	T f (x,t) = \int_{\mathbb{R}^d} \mathrm{e}^{2\pi i (x\cdot \xi + Q(\xi, t))} \beta( 2^{-j} |\xi|)  \widehat{f}(\xi)\,\mathrm{d} \xi. 
\]
Then, we have the following.

\begin{lem}[\cite{CLV}, Lemma 2.1]\label{lem_CLV} 
Let $q,r\geq2$, and  $\nu\in \mathbb{R}$. Let $\mu$ be a finite  measure supported on $[-2,2]$. 
For $j\ge 0$, let $\mathcal{J} = \{ J\}$ be a collection of disjoint intervals of length $2^{(1-\gamma)j}$  included in $[-2, 2]$. 
Suppose that \eqref{250629_1530} and \eqref{250629_1531} hold for $|\beta| \leq \max\{2, d-2\nu +3\}$.
Suppose also that
\begin{align*}
	\| T f \|_{L_t^q(\mathrm d\mu, J; L_x^r(\mathbb{B}^d))}  \leq C 2^{\nu j} \| f\|_2
\end{align*}
with a constant $C$ for all $J \in \mathcal{J}$.
Then, there exists a constant  $C$ such that
\Be
	\| T f \|_{ L_t^q(\mathrm d\mu, \cup_{J\in \mathcal{J}} J; L_x^r(\mathbb{B}^d))} \leq C  2^{\nu j}  \|f\|_2. \nonumber
\Ee
\end{lem}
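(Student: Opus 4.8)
The plan is to deduce the estimate on $\bigcup_{J\in\mathcal J}J$ from the per-interval hypothesis by an almost-orthogonality argument matched to the physical motion of the solution. First, one may assume $j$ is large, since for $j$ in any fixed bounded range $\mathcal J$ is a finite collection and the conclusion follows from the hypothesis and the triangle inequality. As the intervals of $\mathcal J$ are pairwise disjoint and $q\ge 2$,
\[
\big\| T f \big\|_{L_t^q(\mathrm{d}\mu,\bigcup_{J}J;L_x^r(\mathbb B^d))}^q=\sum_{J\in\mathcal J}\big\| T f \big\|_{L_t^q(\mathrm{d}\mu,J;L_x^r(\mathbb B^d))}^q ,
\]
so it suffices to produce, for each $J$, a function $f_J$ with $Tf=Tf_J$ on $J\times\mathbb B^d$ up to a negligible error and with $\sum_{J}\|f_J\|_2^2\lesssim\|f\|_2^2$; applying the hypothesis to $f_J$ and using $\|\cdot\|_{\ell^q_J}\le\|\cdot\|_{\ell^2_J}$ then gives $\sum_J\|Tf\|^q_{L^q_t(\mathrm{d}\mu,J;L^r_x)}\lesssim 2^{q\nu j}\|f\|_2^q$. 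The structural fact behind such a localization is that on a single interval $J$ the evolution acts essentially as a translation: by \eqref{250629_1530}, as $t$ runs over $J$ the group velocity $\nabla_\xi Q(\xi,t)$ moves by $\lesssim|J|\,|\xi|^{\gamma-1}\sim 1$ on the shell $|\xi|\sim 2^j$, so, modulo kernel tails, $Tf(\cdot,t)$ for $t\in J$ is the solution frozen at the center of $J$ composed with an $O(1)$ translation.

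To build the $f_J$ with control of $\sum_J\|f_J\|_2^2$ one decomposes $f=\sum_\theta f_\theta$ into almost orthogonal wave packets adapted to the shell $|\xi|\sim 2^j$ ($\xi$-caps of radius $\lesssim 2^{j/2}$ and dual tubes) and groups the packets according to the intervals $J$ across which the core of $Tf_\theta$ crosses $\mathbb B^d$, setting $f_J$ equal to the sum of the packets assigned to $J$ and its bounded neighbourhood. That a packet whose core misses $\mathbb B^d$ during $J$ contributes only a negligible amount to $J\times\mathbb B^d$ is a non-stationary phase estimate for the kernel $\int\mathrm e^{2\pi i((x-y)\cdot\xi+Q(\xi,t))}\beta(2^{-j}|\xi|)\,\mathrm{d}\xi$: integration by parts in $\xi$, which calls for the bounds \eqref{250629_1531} up to order $\sim d-2\nu$, produces enough powers of $2^{-j}$ to swallow all the polynomial-in-$2^j$ losses — the volume of the relevant region, the number of intervals, and, when $\nu<0$, the factor $2^{-\nu j}$ needed to beat the target $2^{\nu j}$ — which is the reason for the hypothesis $|\beta|\le\max\{2,d-2\nu+3\}$.

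The main obstacle is the dispersive spreading when $\gamma$ is large: for $\gamma\le 2$ one can choose the packets so that the tubes stay $O(1)$-thick over the whole time range and the assignment $\theta\mapsto J$ has bounded overlap, but for $\gamma>2$ a packet necessarily spreads to width $\gg 1$ over $[0,1]$, so a single packet is visible in $\mathbb B^d$ across many consecutive intervals $J$ and naive bounded overlap fails. The resolution is that a packet's amplitude decays at precisely the rate at which it spreads, so its contributions $\|Tf_\theta\|_{L_t^q(\mathrm{d}\mu,J;L_x^r(\mathbb B^d))}$ over the intervals it meets still sum in $\ell^2_J$ to $O(2^{\nu j}\|f_\theta\|_2)$; equivalently, one can bypass wave packets and argue by induction on the dyadic time-length scale, splitting $\bigcup\mathcal J$ into two subfamilies whose associated pieces of $f$ are spatially almost disjoint (the group velocity is elliptic by \eqref{250629_1530}) and iterating down to length $2^{(1-\gamma)j}$, where the hypothesis applies. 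Summing the squares over $J$ and keeping $q,r\ge 2$ throughout then yields the stated bound with the same power $2^{\nu j}$.
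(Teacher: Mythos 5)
Your strategy is genuinely different from the one used here: you localize on the \emph{input} side, seeking $f_J$ with $Tf\approx Tf_J$ on $J\times\mathbb B^d$ and $\sum_J\|f_J\|_2^2\lesssim\|f\|_2^2$, whereas the proof in the paper (following \cite{CLV}) dualizes and localizes on the \emph{output} side: one writes $F=\sum_J F_J$ with $F_J=\chi_{\mathbb B^d\times J}F$ (an exact, trivially summable decomposition), applies the hypothesis to each $\|T^*F_J\|_2$, and controls the cross terms $\langle T^*F_J,T^*F_{J'}\rangle=\langle TT^*F_{J'},F_J\rangle$ for $\dist(J,J')\ge C2^{(1-\gamma)j}$ by the kernel bound $|K(x,y,s,t)|\le C2^{dj}(1+2^{\gamma j}|t-s|)^{-L}$, obtained by $L=2\vee(d-2\nu+2)$ integrations by parts using \eqref{250629_1530}--\eqref{250629_1531} (you did correctly identify this as the role of the hypothesis on $|\beta|$). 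The dual route entirely sidesteps the problem of deciding which part of $f$ is ``responsible'' for $J$, which is exactly where your argument breaks down.

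The genuine gap is in your treatment of $\gamma>2$ (in fact already of $\gamma>3/2$, where a frequency-$2^j$ packet necessarily spreads beyond unit width over the time interval $[0,1]$ by the uncertainty principle: a cap of radius $\rho$ gives velocity uncertainty $\rho\,2^{(\gamma-2)j}$ and initial width $\rho^{-1}$, whose optimum $2^{(\gamma-2)j/2}$ exceeds $1$). When a single packet $f_\theta$ is visible in $\mathbb B^d$ during $N\gg1$ consecutive intervals, it must be placed into $N$ of the $f_J$'s, so $\sum_J\|f_J\|_2^2\gtrsim N\|f_\theta\|_2^2$ and the required square-function bound fails. Your proposed fix --- that the amplitude of $Tf_\theta$ decays at the rate it spreads, so the contributions sum in $\ell^2_J$ --- cannot be implemented inside this scheme: the hypothesis is a black box whose input is $\|f_J\|_2$, and the $L^2$ norm of a packet is conserved by the (unitary) evolution, so the pointwise amplitude decay of $Tf_\theta$ is invisible to it. Exploiting that decay would require re-proving a per-interval estimate for packets rather than invoking the assumed one. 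The alternative induction-on-scales you sketch presupposes that the pieces of $f$ ``associated'' to two halves of the time range are spatially almost disjoint, which is again precisely the input-localization that fails (the influence regions for different $t$ are nested balls of radius $\sim t\,2^{(\gamma-1)j}$, not disjoint sets). To repair the argument you should pass to the adjoint as in the paper: disjointness in $(x,t)$ on the output side is free, and almost orthogonality is then a non-stationary-phase estimate on the kernel of $TT^*$.
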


The lemma was originally proved in \cite{CLV} with $\mathrm{d}t$ in place of $\mathrm{d}\mu$, and with the order of integration interchanged, using a standard $TT^*$ argument. Nevertheless, it is straightforward to verify that the same argument remains valid for any finite measure while the order of integration interchanged.

Here, we provide a brief sketch of the proof of Lemma \ref{lem_CLV}.  
For each $J\in \mathcal J$,  
set
\[F_J(x,t)=\chi_{\mathbb{B}^d\times J }(x,t)F(x,t).\] 
To prove Lemma \ref{lem_CLV}, by duality, it is enough to show
\[
\|\sum_{J \in \mathcal J}T^{*} F_J\|_{2}  \leq C 2^{\nu j} \|F\|_{L_t^{q'}(\mathrm d\mu; L_x^{r'})}, 
\]
provided that  
\[  \|T^{*} F_J\|_{2} \le   C 2^{\nu j} \|F\|_{L_t^{q'}(\mathrm d\mu; L_x^{r'})}, \quad \forall  J\in \mathcal J, \]
where $T^*$ denotes the adjoint operator of $T$. Note that $T^*$ is given explicitly by
\[
T^*F (x) = \iint_{\mathbb{R}^d} \mathrm{e}^{2\pi i (x\cdot \xi- Q(\xi, s))}  \psi( 2^{-j} |\xi|) \widehat{F(s)}(\xi)\,\mathrm{d} \xi \rd \mu(s).
\]

Since  the estimates are local in both time and space, the  key part of the argument  in \cite{CLV} is to 
establish the inequality
\Be
\label{key-est}
\|\chi_{J}TT^{*} F_{J'}\|_{L_t^{\infty}(\mathrm d\mu; L_x^{\infty}(\mathbb{B}^d))} \le C 
2^{dj}(1+2^{\gamma j}\dist(J, J'))^{-L}\|F_{J'}\|_{L_t^{1}(\mathrm d\mu; L_x^{1}(\mathbb{B}^d))}.
\Ee
for $\dist(J, J') \ge
C2^{(1-\gamma)j}$ with  $C$ large enough, where $L=2\vee (
d-2 \nu+2).$  Let 
\[  K(x,y,s,t) = \int
e^{i((x-y)\cdot \xi +
Q(\xi,t)-Q(\xi,s))}\psi^2( 2^{-j}{|\xi|})\, \rd\xi, \]
and  we note that 
$$TT^{*}F = \iint K(x,y,s,t)F(y,s)\,\rd y\rd \mu(s). $$ 
Thus, \eqref{key-est} follows from the estimate  $ |K(x,y,s,t)|\le
C2^{dj}(1+2^{\gamma j}|t-s|)^{-L}$  when $|t-s|\ge
C2^{(1-\gamma)j}$. This was already proved in \cite{CLV}, combining \eqref{250629_1530} and  \eqref{250629_1531},  by routine integration by parts ($L$
times).

\begin{proof}[Proof of \eqref{lse_j}]
Let $q=r=2$ and $Q(\xi, t) = t|\xi|^\gamma$. Note that  $Q(\xi, t) = t|\xi|^\gamma$ satisfies both \eqref{250629_1530} and \eqref{250629_1531}. 
By \eqref{jjjj} and Lemma~\ref{lem_CLV} with $\nu =\alpha(1-\gamma)$, \eqref{lse_j} follows if we show
\[
	\| U_t^\gamma P_j f\|_{{L_{t}^2 (\rd\mu, J; L^2_x(\mathbb{B}^d))}}^2 \leq C 2^{\alpha(1-\gamma)j} \langle \mu \rangle_\alpha \|f\|_2^2.
\]
By  Plancherel's theorem, it is clear that $\| U_t^\gamma P_j f\|_{L^2_x(\mathbb{B}^d))}\le \|f\|_2$. Then, by \eqref{alpha-dim} the desired inequality follows.
\end{proof}

\subsection{Proof of Theorem \ref{thm_LS-homo}} 
The proof of \eqref{ls-s-homo} is similar to that of \eqref{ls-s}. Indeed, it suffices to show 
\[   	\| U_t^\gamma P_j f \|_{L_{t}^2 (E(2^{-\gamma  j}); L^2_x(\mathbb{B}^d))}^2 \leq C2^{(\alpha-\gamma)j} \|f\|_2^2 .\]
Let $F\subset [-2,2]$ be a measurable set.   Considering  $\rd\mu= \chi_F \rd t$, we see that  Lemma \ref{lem_CLV}  also holds for the measure $\chi_F \rd t$. Thus, the desired inequality \eqref{ls-s-homo} follows if we show 
\Be 
\label{jj-jj} 
\| U_t^\gamma P_j f \|_{L_{t}^2 (E(2^{-\gamma  j})\cap J; L^2_x(\mathbb{B}^d))}^2 \leq C2^{(\alpha-\gamma)j} \|f\|_2^2 
\Ee
for any interval  $J$ of length $2^{(1-\gamma)j}$.  

Let $\mathcal{I}$ denote a collection of disjoint  intervals of length $2^{-\gamma j}$, which cover $E(2^{-\gamma  j})\cap J$. 
 It is clear that 
\[ \| U_t^\gamma P_j f\|_{L_{t}^2 (E(2^{-\gamma  j})\cap J; L^2_x(\mathbb{B}^d))} \le \sum_{I: E(2^{-\gamma  j})\cap J \cap I \not= \emptyset}  \| U_t^\gamma P_j f\|_{{L_{t}^2 ( I; L^2_x(\mathbb{B}^d))}}^2.\]
Since $[E]_{\alpha, \frac{\gamma-1}\gamma}<\infty,$  we have $\# \{I \in \mathcal I : E(2^{-\gamma  j})\cap J \cap I \not= \emptyset\}\lesssim  2^{\alpha j}$. By Plancherel's theorem, we also have $\| U_t^\gamma P_j f\|_{{L_{t}^2 ( I; L^2_x(\mathbb{B}^d))}}^2\le C2^{-\gamma j} \|f\|_2^2$.  
Combining those observations, we conclude \eqref{jj-jj}. 

\subsection{Sharpness of the regularity exponent}  Using the same argument  in the proof of Proposition \ref{prop:nec}, we can show sharpness of the smoothing orders in Theorems \ref{thm_LS} and \ref{thm_LS-homo}.

\begin{prop}\label{prop:nec2}  
Let $\gamma > 1$. Suppose that the estimate \eqref{ls-s} holds for every probability measure $\mu$ satisfying \eqref{alpha-dim}, then  $s \geq  
\alpha(1-\gamma)/2$. 
Similarly, if the estimate \eqref{ls-s-homo} holds for every set $E \subset [0,1]$ with bounded $\alpha$-Assouad characteristic, then  $s \geq  
(\alpha-\gamma)/2$. 
\end{prop}

\begin{proof} 
To prove the first statement, by Lemma \ref{lem_CLV}, it suffices to consider  the estimate  over intervals $J$ of length $c2^{(1-\gamma)j}$ with $c$ determined later, while the Fourier support of the input function $f$ is contained in a ball of radius $2^{j/2}$ lying in the annulus $\{ \xi: 2^{j-1}\le |\xi|\le 2^{j+1}\}$.  We consider 
\[ \widehat{f}(\xi) = \psi( 2^{\frac j2}| 2^{-j}\xi-\xi_0|) e^{-it_\circ|\xi|^\gamma}.\]
Then, by the argument in the proof of Proposition \ref{nec-del} (taking $m=j$), we have 
\Be
\label{lower-2} |U^\gamma_t f(x)|\gtrsim 2^{\frac d2j}
\Ee
if 
$ |t-t_\circ|\le c2^{(1-\gamma) j}$ and $|x- 2^{(\gamma-1) j}(t-t_\circ) \nabla\phi(\xi_0)\big)| \le c2^{-j/2}$  for a sufficiently small $c>0$.

We make use of  Lemma \ref{ex}, so there are a set $E$ and  a measure $\mu$ with $\supp \mu=E$
satisfying \eqref{mess}. Clearly, $E$ has bounded $\alpha$-Assouad characteristic.  Taking $l=\lfloor \gamma j \rfloor$ and $k=j$, we apply $\rm (A)$ in Lemma \ref{ex}.  Then, similarly as in the proof of Proposition \ref{prop:nec}, there are  disjoint intervals $J_1, \dots,  J_n\subset J$ of length $C2^{-\gamma j}$  with $n\sim 2^{\alpha j}$, and  each of the intervals  $J_1, \dots,  J_n$ contains at least one element of $E$.   Thus, using \eqref{mess},    we obtain 
\[    \mu(J)\ge 2^{\alpha j}  2^{-\alpha\gamma  j}.  \]
Since $\|f\|_{H^s}\le C 2^{(s + \frac d4)j} $, by combining this and \eqref{lower-2} with  $t_\circ\in J$, the estimate \eqref{ls-s} implies 
\[  2^{\frac d2 j}  2^{-\frac d4 j} 2^{\frac\alpha  2j}  2^{-\frac{\alpha\gamma}2  j} \le C 2^{(s + \frac d4)j} . \] 
Taking $j\to \infty,$ we conclude that $s\ge \alpha(1-\gamma)/2 $.

We now show the second statement.  Since $\supp f\subset \{ \xi: 2^{j-1}\le |\xi|\le 2^{j+1}\}$, with suitable choices of $\psi$, one can easily see that 
\[  |U^\gamma_t P_j f(x)|\gtrsim 2^{\frac d2j}\]
if 
$ |t-t_\circ|\le c2^{(1-\gamma) j}$ and $|x- 2^{(\gamma-1) j}(t-t_\circ) \nabla\phi(\xi_0)\big)| \le c2^{-j/2}$  for a sufficiently small $c>0$. 
We use the intervals  $J_1, \dots,  J_n\subset J$ from the above, which contain at least one element of $E$. Thus, $|E(2^{-\gamma j})\cap J|\gtrsim 2^{(\alpha-\gamma) j} $.  Consequently, the estimate  \eqref{ls-s-homo} implies 
\[  2^{\frac d2 j}  2^{-\frac d4 j} 2^{(\frac\alpha  2-\frac \gamma 2)j} \le C 2^{(s + \frac d4)j} . \] 
Taking $j\to \infty$ yields  $s\ge (\alpha-\gamma)/2 $. 
\end{proof}

\section*{Acknowledgement} 
This work was supported by the National Research Foundation of Korea (RS-2024-00342160; S. Lee), (RS-2021-NR061906, RS-2024-00461749; J. B. Lee);  
the Basque Government  and Spanish MICIN and MICIU (BERC programme and Ikerbasque,  CEX2021-001142-S, PID2023-146646NB-I00, CNS2023-143893; Roncal).
The first and third authors would like to thank  F. Zhang and S. Zhao  for related discussions in the course of  the earlier work \cite{LRZZ}, which have benefited the current project.

\bibliographystyle{plain}

\end{document}